\theoremstyle{plain}
\newtheorem{theorem}{Theorem}[section]
\newtheorem{corollary}[theorem]{Corollary}
\newtheorem{lemma}[theorem]{Lemma}
\newtheorem{proposition}[theorem]{Proposition}
\theoremstyle{definition}
\newtheorem{definition}[theorem]{Definition}
\newtheorem{remark}[theorem]{Remark}
\newtheorem{example}[theorem]{Example}
\newtheorem{assumption}[theorem]{Assumption}
\numberwithin{equation}{section}
\newcommand{\fHS}{\mathcal{H}_s^\infty}
\newcommand{\DfHS}{\mathcal{H}_s^{-\infty}}
\newcommand{\coLp}{\mathrm{ Co}_{\rho_s}^u L^p_{\alpha+n+1-s p/2}(G)}
\newcommand{\SU}{\mathrm{SU}}
\newcommand{\B}{\mathbb{B}^n}
\newcommand{\supp}{\mathrm{supp}}
\newcommand{\Span}{\mathrm{span}}
\newcommand{\osc}{\mathrm{osc}}
\newcommand{\fH}{\mathcal{H}}
\newcommand{\fS}{\mathcal{S}}
\newcommand{\T}{\mathbb{T}}
\newcommand{\C}{\mathbb{C}}
\newcommand{\R}{\mathbb{R}}  
\newcommand{\hH}{\widehat{H}}
\newcommand{\fg}{\mathfrak{g}}
\newcommand{\coBP}{\mathrm{ Co}_{\rho}^{u}B}
\newcommand{\coBh}{\mathrm{ Co}_\mathcal{\rho_\sigma}^{u} \widehat{B}}
\newcommand{\ft}{\widetilde{f}}
\newcommand{\Bh}{\widehat{B}}
\newcommand{\cH}{\mathcal{H}}
\newcommand{\cS}{\mathcal{S}}
\newcommand{\Id}{ \mathrm{d}}
\newcommand{\id}{\mathrm{id}}
\newcommand{\Iff}{ \mbox{if and only if }}
\newcommand{\llangle}{\langle\!\langle}
\newcommand{\rrangle}{\rangle\!\rangle}
\newcommand{\GL}{\mathrm{GL}}
\newcommand{\Co}{\mathrm{Co}}
\newcommand{\ip}[2]{ ( #1,#2 )}
\title[Coorbits for projective representations]{Coorbits for projective representations with an application to Bergman spaces }
\author{Jens Gerlach Christensen} 
\address{ Department of Mathematics, Colgate University, 13 Oak Drive, Hamilton NY 13346} 
\email{jchristensen@colgate.edu}
\urladdr{http://www.math.colgate.edu/~jchristensen}
\author{Amer H. Darweesh} 
\address{Department of Mathematics and Statistics,
Jordan University of Science and Technology,
Irbid 22110, B.O. Box (3030) Jordan.
  } 
\email{ahdarweesh@just.edu.jo} 
  \author{Gestur \'Olafsson} 
\address{ Department of Mathematics, Louisiana State University, Baton Rouge, LA 70803} 
\email{olafsson@math.lsu.edu}
\urladdr{http://www.math.lsu.edu/~olafsson}
\thanks{The research was partially supported by NSF grant   DMS 1321794. The first and last named authors would
also like to thank AMS for it's support during the MRC program 
\textit{Lie Group Representations, Discretization, and Gelfand Pairs}
June 5--June 11, 2016 }
\begin{document}
\begin{abstract}
Representation theory of locally compact topological groups is
a powerful tool to analyze Banach spaces of
functions and distributions. It provides a unified framework for
constructing function spaces and to study several generalizations
of the wavelet transform.
Recently representation theory has been used to provide
atomic decompositions for a large collection of classical
Banach spaces. But in some natural situations, including Bergman spaces
on bounded domains, representations are too restrictive. The proper
tools are projective representations.
 In this paper we extend known techniques from representation theory
to also include projective representations. This leads naturally to
twisted convolution on groups avoiding the usual central extension of
the group.
As our main application we obtain
atomic decompositions of Bergman spaces on
the unit ball through the holomorphic discrete series
for the group of isometries of the ball.
\end{abstract}

\maketitle

\section{Introduction}
\noindent
With the rise of continuous wavelet theory it was discovered 
that representation theory could be used to obtain atomic
decompositions for some classical Banach spaces. 
This area of harmonic analysis is called coorbit theory
and it was initiated by Feichtinger and Gr\"ochenig
\cite{Feichtinger1988,Feichtinger1989,Feichtinger1989a,Grochenig1991}.
Several interesting generalizations were later presented in
\cite{Christensen1996,Rauhut2005,Fornasier2005,Rauhut2007a,Dahlke2004a,Dahlke2004,Dahlke2007,Dahlke2008}. All these examples use irreducible 
integrable representations
in order to construct atomic decompositions. This allows one to choose
atoms in an appropriate minimal Banach space. 
As has been remarked recently, assuming integrability and irreducibility is not needed, and in fact often the 
restriction of irreducibility and  integrability as well as the criteria for
selecting atoms turns out to be too restrictive. 
Therefore, the first and last author suggested the use of Fr\'echet spaces \cite{Christensen2011,Christensen2012} in coorbit 
theory, and recently the idea has been used in several cases
\cite{Christensen2012a,Christensen2013,Christensen2016,Dahlke2016}.

There are many situations in which projective representations arise
more naturally than representations. 
Typical examples are the modulation spaces and the
short time Fourier transform which stems from modulation and translation,
as well as the holomorphic discrete series representations 
on Bergman spaces on bounded symmetric domains.
The idea of using projective representations in coorbit theory
has earlier been explored in \cite{Christensen1996} under the assumptions
of irreducibility, integrability, and  continuity of the
multiplier. As mentioned, our aim is to present and apply
a coorbit theory without these restrictions.
The first two restrictions were removed in the thesis of
the second author \cite{Darweesh2015}.
The continuity assumption used in both
\cite{Christensen1996} and \cite{Darweesh2015}
means that those approaches apply 
only to some special cases like Abelian groups
or simply connected groups.
For many simply connected groups this creates new 
obstacles due to them having infinite center.
As a consequence, the papers \cite{Christensen1996} and \cite{Darweesh2015} 
cannot be used to describe Bergman spaces on the unit ball in $\mathbb{C}^n$
as coorbits for the group $\mathrm{SU}(n,1)$, or more generally, 
Bergman spaces on bounded domains in $\mathbb{C}^n$.

This is also the reason that we had to work with
finite covering groups of $\mathrm{SU}(n,1)$ and 
require rationality of the representation parameter in \cite{Christensen2016}.
Bargmann and Mackey have shown that 
for general locally compact groups the multiplier
can be chosen continuous in a neighborhood of the identity,
and for Lie groups the multiplier can even be chosen smooth in
such a neighborhood.
In this paper we show that these facts are sufficient for
obtaining a working coorbit theory for projective representations.
Furthermore, we demonstrate the benefits of the theory by applying it to the
case of Bergman spaces on the unit ball.
This finishes the work initiated in \cite{Christensen2016},
since it allows us to remove the rationality restriction
on the representation parameter,
and thereby we can provide atomic decompositions for 
the entire scale of Bergman spaces.
 The approach extends to Bergman spaces on general bounded symmetric
 domains, which will appear in a forthcoming paper by the first and last authors.

We would like to also point out another 
difference between the present paper and \cite{Christensen1996}
and \cite{Darweesh2015}. In those papers the atomic decompositions
are obtained by applying results in  \cite{Feichtinger1989}
and \cite{Christensen2012}, respectively, to the Mackey obstruction group
(which will be introduced in the next section).
In this paper we avoid this difficulty and work directly on the group. 
Here the reader should keep in mind the modulation spaces of Feichtinger 
\cite{Feichtinger1983,Feichtinger2006a} 
(see also the book \cite{Grochenig2001}). 
Those spaces arise
from translation and modulation on functions or distributions on $\R^{n}$. But those two actions do
not commute and lead in a natural way to the Schr\"odingar representation of the reduced Heisenberg group. But the theory
is usually carried out without any mention of the compact center of the reduced Heiseinberg group, and only the action of
$\R^{2n}$, the corresponding cocylce $e^{-ix\cdot y}$, and the twisted convolution are used.
For modulation spaces on general Abelian groups see 
\cite{Feichtinger2003a,Grochenig1998}.

\section{Projective Representations}\noindent
In this section we review known results on measurable and locally continuous
projective representations, and, following Mackey, 
we will construct a representation of an extension of a locally compact
second countable group from a given projective representation. We use \cite{Varadarajan1985} as
a standard reference even if the results are mostly due to Mackey and Bargmann. 

We assume that $G$ is a locally compact second countable group equipped with a fixed left
invariant Haar measure which we denote by $dx$.
We denote by $\mathbb{T}:=\{t \in \C \mid |t|=1\}$ the one dimensional torus
with normalized Haar measure $dt$.

\begin{definition}
  Let $ \mathcal{S}$ be a locally convex Hausdorff topological vector space over $\C$,
  and denote by $\mathcal{S}^*$ its conjugate dual. A projective representation of 
  $G$ is a mapping $\rho : G \to \GL(\mathcal{S})$, the space of
continuous linear maps $\cS\to \cS$ with a continuous inverse, that satisfies the following three
  conditions:
  \begin{enumerate}
  \item $\rho(1)= \id.$
  \item There is a Borel function (called a \emph{multiplier} or a \emph{cocycle})
    $ \sigma:G \times G \to \mathbb{T}$, which satisfies the condition 
    $$\rho(xy)=\sigma (x,y) \rho(x)\rho(y).$$ 
  \item For every $v\in \mathcal{S}$ and every $\lambda\in S^*$
    the mapping 
    \[x \mapsto \langle\lambda, \rho(x)v\rangle :=\lambda (\rho (x)v)\]
is a Borel function.
\end{enumerate}

If there is a neighbourhood around $e$ on which 
the function $x\mapsto \langle\lambda, \rho(x)v\rangle$ is continuous
for all $\lambda\in\fS^*$, we say that $\rho$ is locally weakly continuous.
A projective representation with multiplier $\sigma$ is said to be a {\it $\sigma$-representation}.
We call the multiplier $\sigma$ locally
continuous if it is continuous on a neighbourhood of $e\times e$.
\end{definition}
Notice, that for a $\sigma$-representation $\rho$ 
and $x\in G$ we have
\begin{equation}\label{eq:rhoInv}
\rho(x^{-1}) = \overline{\sigma (x,x^{-1})} \rho(x)^{-1}= \overline{\sigma(x^{-1},x)}\rho(x)^{-1}\, .
\end{equation}

Let $x,y,z\in G$. The following are straightforward consequences about the cocycle $\sigma$: 
\begin{enumerate}
  \item $ \sigma(x,1)=\sigma (1,x) =1$, 
  \item $ \sigma(x,y) ^{-1} = \overline{\sigma(x,y)}$,
    \item $\sigma(x,x^{-1}) = \sigma(x^{-1},x)$,
  \item $\sigma(xy,z) \sigma(x,y)=\sigma(x,yz)\sigma(y,z)$.
\end{enumerate}

Following Bargmann, we say that two cocycles $\sigma$ and $\tau$
are \emph{similar} if there exists a Borel function $a:G\to\T$
such that
\begin{equation}\label{eq:a}
\tau(x,y) = \frac{a(xy)}{a(x)a(y)}\sigma(x,y).
\end{equation}
We  note that
if $\tau$ is similar to $\sigma$ via the Borel function $a$ and $\rho$ is a  $\sigma$-representation, then
$\eta (x ):= a(x)\rho (x)$ is a $\tau$-representations.
 
\begin{theorem} Every multiplier is similar to a multiplier which is continuous on some open neighborhood
of $(e, e)\in G\times G$. If $G$ is a Lie group, then every multiplier is similar to a multiplier that is smooth
in an open neighborhood of
$(e,e)$. If $G$ is a connected and simply connected Lie group, then every multiplier is similar to a multiplier which is analytic on the entire group $G\times G$.
\end{theorem}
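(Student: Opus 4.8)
The statement bundles together three classical facts—Mackey's local continuity theorem, its analytic sharpening for Lie groups, and Bargmann's global theorem in the simply connected case—so the plan is to establish the three assertions in order, each feeding the next.

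For the first assertion I would argue by a regularization (averaging) argument. After using property (1) to normalize $\sigma(x,1)=\sigma(1,x)=1$, the aim is to manufacture a Borel function $a\colon G\to\T$ whose coboundary $a(xy)/\big(a(x)a(y)\big)$ absorbs the rough part of $\sigma$ near $(e,e)$. The essential device is the cocycle identity
\[
\sigma(xy,z)\,\sigma(x,y)=\sigma(x,yz)\,\sigma(y,z),
\]
which expresses $\sigma(x,y)$ as a ratio of values of $\sigma$ in which only one argument has been translated. Integrating this relation against a fixed $f\in C_c(G)$ with $\int_G f\,dx=1$ and using left invariance of $dx$ to swallow the translation, one writes $\sigma$, up to an explicit coboundary, as an integral of the shape $\int_G \sigma(\,\cdot\,,w)\,f(y^{-1}w)\,dw$. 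Since $\sigma$ is bounded (it is $\T$-valued) and translation is $L^1$-continuous, dominated convergence makes such integrals continuous in the translated variable, and a second pass handles the other variable. The real work, and the first obstacle, is measure-theoretic: one must arrange (by the choice of $f$, i.e.\ of averaging set) that the resulting $a$ is Borel and nonvanishing on a neighborhood of $e$, which is where Fubini's theorem and the boundedness of $\sigma$ are used; for the details I would follow \cite{Varadarajan1985}.

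For the second assertion, suppose $G$ is a Lie group and $\sigma$ is already continuous on $U\times U$ for some neighborhood $U$ of $e$. Then the twisted multiplication $(u,s)(v,t)=\big(uv,\,st\,\sigma(u,v)\big)$ makes $U\times\T$ into a topological local group, with associativity furnished by the cocycle identity. Being locally Euclidean, this local group is a local Lie group by the Montgomery--Zippin solution of Hilbert's fifth problem (in its local form), hence carries an analytic structure compatible with the projections to $G$ and $\T$; reading $\sigma$ off a local analytic section then shows it is similar to an analytic multiplier near $(e,e)$.

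For the third assertion I would descend to the infinitesimal level. A branch of the logarithm of the analytic local multiplier is a local $\R$-valued $2$-cocycle, and antisymmetrizing its second-order Taylor part at $(e,e)$ yields a Lie-algebra $2$-cocycle $\omega\in Z^2(\fg,\R)$; let $\widetilde\omega$ be the corresponding left-invariant, and hence closed, $2$-form on $G$. I would then define a global multiplier by integrating $\widetilde\omega$ over a $2$-simplex with vertices $e,x,xy$, setting $\tau(x,y)=\exp\!\big(i\int_{[e,x,xy]}\widetilde\omega\big)$; closedness of $\widetilde\omega$ yields the cocycle identity through Stokes' theorem applied to the boundary of the $3$-simplex on $e,x,xy,xyz$, and analyticity is immediate. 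The decisive point—and the main conceptual obstacle—is that this integral is well defined precisely because $G$ is simply connected: $\pi_1(G)=0$ together with $d\widetilde\omega=0$ renders the value independent of the chosen filling of the simplex. Finally, $\tau$ and $\sigma$ induce the same Lie-algebra cocycle, so they agree near $(e,e)$ up to a coboundary, and simple connectivity propagates this local similarity to all of $G\times G$.
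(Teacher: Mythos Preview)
The paper does not prove this theorem; it simply cites Corollaries~7.6, 7.21 and 7.30 of \cite{Varadarajan1985} for the three assertions in turn. Your proposal instead outlines the arguments behind those citations, and the sketches are largely in the right spirit: the averaging argument for the first claim is Mackey's, the passage through a local Lie group structure for the second is one legitimate route (Varadarajan argues more directly via the analytic structure on the Mackey extension $G_\sigma$, avoiding Montgomery--Zippin), and the integration of a closed left-invariant $2$-form for the third is essentially Bargmann's method. So you have supplied content where the paper supplies references; the trade-off is that a full proof along your lines is lengthy, which is precisely why the paper defers to \cite{Varadarajan1985}.

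One point in your third sketch deserves correction. You write that $\pi_1(G)=0$ and $d\widetilde\omega=0$ together render $\int_{[e,x,xy]}\widetilde\omega$ independent of the chosen filling, but this is not quite enough: simple connectivity lets you fill the boundary loop, yet two fillings with the same boundary differ by a $2$-cycle, and independence of the integral requires that $\widetilde\omega$ integrate to zero over every such cycle, i.e.\ that $[\widetilde\omega]=0$ in $H^2_{\mathrm{dR}}(G)$. What actually makes the construction work is \'E.~Cartan's theorem that $\pi_2$ of any Lie group vanishes, which for simply connected $G$ forces $H_2(G;\R)=0$ via Hurewicz; the same fact is needed to fill the $3$-simplex in your Stokes argument for the cocycle identity. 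This step should be made explicit.
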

\begin{proof} The first statement is  \cite[Corollary 7.6]{Varadarajan1985}. The second statement is \cite[Lemma 7.20]{Varadarajan1985} and
the last segment is \cite[Corollary 7.30]{Varadarajan1985}.
\end{proof}
 
For projective representations we define irreducibility, cyclicity, admissible vectors, unitarity and square integrability 
in the same way as for representations. We summarize these notions in the following definition.
\begin{definition}
Let $(\rho,\fS)$ be a projective representation, then
\begin{enumerate}
\item A subspace $W$ of $\fS$ is $\rho$-invariant if $\rho(x)W\subseteq W$ for all $x\in G$.
\item $(\rho,\fS)$ is irreducible if the only closed $\rho$-invariant subspaces are $\{0\}$ and $\fS$ itself.
\item A vector $u \in \fS$ is a $\rho$-cyclic if $\Span\{\rho (x)u \mid x \in G\}$ is dense in $\fS$. If such a cyclic vector exists, we say 
that $(\rho,\fS)$ is a cyclic projective representation.
\item $(\rho,\fH)$ is unitary if $\fH$ is a Hilbert space, and $\rho(x)$ is a unitary operator for every $x\in G$.
\item If $\fH$ is a Hilbert space and 
$(\rho,\fH)$ is irreducible and unitary it is called square-integrable if there is a nonzero vector $u\in \fH$ such that $$\int_G |\langle u,\rho(x)u\rangle|^2\, \,d x< \infty.$$ 
In this case $u$ is called a \emph{$\rho$-admissible vector}.
\end{enumerate}
\end{definition}

 In the following lemma we define the dual 
projective representation on the conjugate dual of a Fr\'echet space.

Let $\fS$ be a Fr\'echet space 
and denote by $\fS^*$ be  the conjugate dual of $\fS$ equipped with the
weak*-topology. This implies that $\fS^*$ is a locally convex vector
space and $(\fS^*)^*=\fS$.

\begin{lemma} 
Let $(\rho,\fS)$ be a $\sigma$-representation of $G$ on a Fr\'echet space $\fS$, 
and let $\fS^*$ be  the conjugate dual of $\fS$ equipped with the weak*-topology. 
The mapping $\rho^*$, which is given by
$$\langle\rho^*(x)\lambda,v\rangle:=\langle\lambda,\rho(x)^{-1}v\rangle$$ 
for all $\lambda\in \fS^*$ and all $v\in \fS$, defines a $\sigma$-representation of $G$ on the space $\fS^*$. 
Finally $x\mapsto \langle \rho^*(x)\lambda ,u\rangle$ is continuous
around $e$ if $\sigma$ is locally continuous and $\rho$ is locally weakly continuous.
\end{lemma}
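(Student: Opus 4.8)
The plan is to verify the three defining conditions of a projective $\sigma$-representation for $\rho^*$ on the weak*-dual $\fS^*$, together with the fact that each $\rho^*(x)$ lies in $\GL(\fS^*)$, and then to establish the local continuity as a separate computation. The one genuinely conceptual point to keep in mind throughout is that $\fS^*$ carries the \emph{conjugate} dual structure: the conjugation built into its scalar multiplication is exactly what converts the factor $\overline{\sigma(x,y)}$ produced by inverting $\rho(xy)$ back into $\sigma(x,y)$, so that $\rho^*$ comes out with multiplier $\sigma$ rather than $\overline{\sigma}$.

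First I would dispose of the routine points. Since $\rho(1)=\id$ we get $\langle\rho^*(1)\lambda,v\rangle=\langle\lambda,v\rangle$, so $\rho^*(1)=\id$. For fixed $x$ the functional $v\mapsto\langle\lambda,\rho(x)^{-1}v\rangle$ is continuous, because $\rho(x)^{-1}$ is a continuous map $\fS\to\fS$ (as $\rho(x)\in\GL(\fS)$) and $\lambda\in\fS^*$; hence $\rho^*(x)\lambda\in\fS^*$. Weak*-continuity of $\rho^*(x)$ is immediate: if $\lambda_\alpha\to\lambda$ weak*, then $\langle\rho^*(x)\lambda_\alpha,v\rangle=\langle\lambda_\alpha,\rho(x)^{-1}v\rangle\to\langle\lambda,\rho(x)^{-1}v\rangle=\langle\rho^*(x)\lambda,v\rangle$ for every $v$. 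Its inverse is given by $\langle\rho^*(x)^{-1}\mu,v\rangle=\langle\mu,\rho(x)v\rangle$, which is weak*-continuous by the same computation, so $\rho^*(x)\in\GL(\fS^*)$.

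The heart of the argument is the multiplier relation. Inverting $\rho(xy)=\sigma(x,y)\rho(x)\rho(y)$ gives $\rho(xy)^{-1}=\overline{\sigma(x,y)}\,\rho(y)^{-1}\rho(x)^{-1}$, since $\sigma(x,y)\in\T$ is a scalar that commutes with the operators. Applying $\lambda$ and pulling the scalar out of the vector slot of the pairing, I would obtain
$$\langle\rho^*(xy)\lambda,v\rangle=\langle\lambda,\rho(xy)^{-1}v\rangle=\overline{\sigma(x,y)}\,\langle\rho^*(x)\rho^*(y)\lambda,v\rangle,$$
where the inner factor is recognized by two applications of the definition of $\rho^*$. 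Because the scalar multiplication on the conjugate dual $\fS^*$ carries a conjugation, the prefactor $\overline{\sigma(x,y)}$ is exactly multiplication by $\sigma(x,y)$ in $\fS^*$, so $\rho^*(xy)=\sigma(x,y)\rho^*(x)\rho^*(y)$ and $\rho^*$ is a $\sigma$-representation with the same multiplier.

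Finally, for the measurability condition and for the local continuity I would rewrite $\rho(x)^{-1}$ in terms of $\rho(x^{-1})$ using \eqref{eq:rhoInv}, namely $\rho(x)^{-1}=\sigma(x,x^{-1})\rho(x^{-1})$, so that
$$\langle\rho^*(x)\lambda,u\rangle=\sigma(x,x^{-1})\,\langle\lambda,\rho(x^{-1})u\rangle.$$
The map $x\mapsto\sigma(x,x^{-1})$ is Borel, being $\sigma$ composed with the continuous map $x\mapsto(x,x^{-1})$, and $x\mapsto\langle\lambda,\rho(x^{-1})u\rangle$ is Borel because $x\mapsto x^{-1}$ is continuous while $y\mapsto\langle\lambda,\rho(y)u\rangle$ is Borel by the definition of $\rho$; the product is therefore Borel, giving condition~(3). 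For the last assertion I would observe that when $\sigma$ is continuous near $(e,e)$ and $\rho$ is locally continuous, both factors above are continuous on a neighborhood of $e$, since $x\mapsto(x,x^{-1})$ carries a neighborhood of $e$ into a neighborhood of $(e,e)$ and inversion carries a neighborhood of $e$ into one; hence their product $x\mapsto\langle\rho^*(x)\lambda,u\rangle$ is continuous around $e$. I expect the main obstacle to be the bookkeeping around the conjugate-dual convention in the multiplier step — producing $\sigma$ rather than $\overline{\sigma}$ — and the clean use of \eqref{eq:rhoInv} to reduce the measurability of $x\mapsto\rho(x)^{-1}u$, for which only $x\mapsto\rho(x)u$ is assumed measurable, to the continuous substitution $x\mapsto x^{-1}$.
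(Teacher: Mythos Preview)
Your proof is correct and follows essentially the same route as the paper: invert $\rho(xy)=\sigma(x,y)\rho(x)\rho(y)$ to obtain the cocycle identity for $\rho^*$, and then use \eqref{eq:rhoInv} to rewrite $\langle\rho^*(x)\lambda,u\rangle$ in terms of $\langle\lambda,\rho(x^{-1})u\rangle$ so that Borel measurability and local continuity follow from those of $\sigma$ and $\rho$ together with the continuity of inversion. The only cosmetic difference is where the conjugation in ``conjugate dual'' sits---the paper treats $\fS^*$ as antilinear functionals (so the pairing is conjugate-linear in the $\fS$-slot and linear in the $\fS^*$-slot), whereas you place the conjugation in the scalar multiplication on $\fS^*$; both conventions are equivalent and each delivers the multiplier $\sigma$, and your bookkeeping is internally consistent.
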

\begin{proof}
As $\fS^*$ is equipped with the weak$^*$ topology, its conjugate
dual is $\fS$. We denote this dual pairing by
$\llangle\cdot,\cdot\rrangle$.
For $v\in (\fS^*)^* = \fS$ and $\lambda \in \fS^*$
we have
$\llangle v,\lambda\rrangle = \overline{\langle \lambda,v\rangle}.$
Therefore the following calculation 
shows that $\rho^*$ has cocycle $\sigma$
\begin{align*}
\langle\rho^*(xy)\lambda,v\rangle=& \langle\lambda,\rho(xy)^{-1}v\rangle\\
=&\langle\lambda,(\sigma(x,y)\rho(x)\rho(y))^{-1}v\rangle\\
=&\langle\lambda,\overline{\sigma(x,y)} \rho(y)^{-1}\rho(x)^{-1}v\rangle\\
=&\langle\sigma(x,y)\lambda, \rho(y)^{-1}\rho(x)^{-1}v\rangle\\
=&\langle\sigma(x,y)\rho^*(x)\rho^*(y)\lambda,v\rangle 
\end{align*}
Hence, $\rho^*(xy)=\sigma(x,y)\rho^*(x)\rho^*(y)$.   
Furthermore,
the equation (\ref{eq:rhoInv}) implies that
\[ 
\llangle v,\rho^*(x)\lambda\rrangle = 
\overline{\langle \rho^* (x)\lambda , v\rangle}
=\overline{\langle \lambda , \rho(x)^{-1} u\rangle}
=
\sigma (x^{-1},x) \overline{\langle \lambda ,  \rho (x^{-1}) u\rangle}\, .
\]
Since $x\mapsto x^{-1}$ is continuous and $\sigma$ and $\rho$ are Borel 
it follows that $ x\mapsto \llangle v,\rho^* (x)\lambda\rrangle$
is Borel.
Moreover, the mapping is locally continuous 
if $\sigma $ locally continuous and $\rho$ is locally weakly continous.
\end{proof}

For any projective representation $\rho$ of $G$ we
can construct an actual representation of a new group related to $G$
which is called the Mackey obstruction group of $G$ (see p. 269f in \cite{Mackey1958}).
We refer to \cite[Chap. VII]{Varadarajan1985} for a detailed discussion.

We first gather some facts about the Mackey group.
Let $\sigma$ be a cocycle on $G\times G$.
As a set, the Mackey group that corresponds to $G$ is the
group $G_\sigma:=G\times\T$, with multiplication given
by 
$$(x,t)(y,s)=(xy,\overline{\sigma(x,y)}\,\, ts)\, .$$ 
The inverse of $(x,t)\in G_\sigma$ is given by 
$$
(x,t)^{-1} = (x^{-1},\sigma(x,x^{-1})\overline{t}) =
(x^{-1},\sigma(x^{-1},x)\overline{t}).
$$
Note that this is a {\it central} extension of $\T$ by $G$ as all the elements $(e, s)$, $s\in\T$, are central
in $G_\sigma$. 
The product of the Borel algebras of $G$ and $\T$ defines a $\sigma$-algebra on $G_\sigma$ and the
product measure $dxdt$ is left invariant.
\begin{theorem}\label{Weyltopology} Let the notation be as above. Then the following holds:
\begin{enumerate}
\item \label{WT-1}
There exists a unique topology on $G_\sigma$, called the \textit{Weyl topology}, that generates the product  $\sigma$-algebra and 
at the same time makes 
$G_\sigma$ into a locally compact Hausdorff topological group.
\item \label{WT-2}
If the multiplier $\sigma$ is continuous around $(e,e)$, then there exists a neighborhood $U$ of $e$ in $G$ such that the Weyl topology on $U\times \T$ corresponds
to the product topology on $U\times \T$.
\item \label{WT-3} 
Two extensions $G_{\sigma}$ and $G_{\tau}$ are isomorphic if and only if
the multipliers $\sigma$ and $\tau$ are similar. If the similarity is given by the function $a$ as in (\ref{eq:a}) then the isomorphism $G_\sigma \to G_\tau$ is given be
$(x,t)\mapsto (x,\overline{a(x)}t)$.
\item \label{WT-4} 
  If $G$ is a connected Lie group, then there exists a unique
analytic structure on $G_\sigma$ compatible with the Weyl topology. The maps
$t\to (e,t)$ and $(x,t)\to x$ are analytic.
\item \label{WT-5}
There exists a smooth, respectively analytic, multiplier $\tau$ similar to $\sigma$ if and only if there exists a smooth, respectively analytic,
map  $\kappa : G\to G_\sigma$ such that $p_1(\kappa (x))=x$ where $p_1(x,t)=x$. 
\end{enumerate}
\end{theorem}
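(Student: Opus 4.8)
The plan is to treat the five assertions in increasing order of dependence, taking (\ref{WT-1}) as the foundation and deriving the rest from it together with the earlier theorem on multipliers. For (\ref{WT-1}) I would first reduce to the case that $\sigma$ is continuous on a neighborhood of $(e,e)$: by that theorem $\sigma$ is similar to such a $\sigma'$ via some Borel $a\colon G\to\T$, and the set bijection $(x,t)\mapsto(x,\overline{a(x)}t)$ is a group isomorphism $G_\sigma\to G_{\sigma'}$ that is bimeasurable for the two product $\sigma$-algebras; hence any topology on $G_{\sigma'}$ generating its product $\sigma$-algebra transports to one on $G_\sigma$ with the same property, and it suffices to construct the topology when $\sigma$ is continuous near $(e,e)$. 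There I would declare the sets $U\times V$, with $U$ a small neighborhood of $e$ in $G$ on which $\sigma$ is continuous and $V$ a neighborhood of $1$ in $\T$, to be a neighborhood basis at $(e,1)$, translate by the group law, and verify the standard axioms for a neighborhood basis of a topological group; continuity of multiplication and inversion near the identity is exactly where continuity of $\sigma$ near $(e,e)$ enters, through the multiplication law $(x,t)(y,s)=(xy,\overline{\sigma(x,y)}ts)$ and the corresponding inversion formula. Local compactness and the Hausdorff property are then immediate from those of $G$ and $\T$. For uniqueness I would invoke Mackey's theorem that a Borel group homomorphism between locally compact second countable groups (the setting of Mackey's theory) is automatically continuous; applying it to the identity map in both directions shows any two such topologies coincide. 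This automatic-continuity input from the Borel theory of Mackey is the main obstacle, and the point on which I would lean most heavily on \cite{Varadarajan1985,Mackey1958}.

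Assertion (\ref{WT-2}) then drops out of the construction: by design the neighborhood basis at $(e,1)$ consists of product sets $U\times V$, so on $U\times\T$ the Weyl and product topologies have the same basic neighborhoods and hence agree. For (\ref{WT-3}) the forward direction is the direct computation that $\phi(x,t)=(x,\overline{a(x)}t)$ is a homomorphism $G_\sigma\to G_\tau$ precisely when (\ref{eq:a}) holds; since $\phi$ is then a Borel group isomorphism it is a homeomorphism by the same automatic-continuity theorem, hence a topological isomorphism. For the converse I would use the standard dictionary for the central extension $1\to\T\to G_\sigma\to G\to 1$: an isomorphism respecting the extension structure must have the form $(x,t)\mapsto(x,b(x)t)$, and the homomorphism property forces the similarity relation (\ref{eq:a}), so that cohomologous classes in $H^2(G,\T)$ correspond to isomorphic extensions.

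For (\ref{WT-4}) I would use that $G_\sigma$ is, by (\ref{WT-1}), a locally compact group sitting in an extension of the Lie group $G$ by the Lie group $\T$; by the solution of Hilbert's fifth problem (Gleason--Yamabe, no small subgroups) such an extension of Lie groups by Lie groups in the locally compact category is again a Lie group. Alternatively, since by the earlier theorem $\sigma$ may be taken analytic near $(e,e)$, the product sets of (\ref{WT-2}) furnish a local analytic group structure, which extends over the connected group $G_\sigma$ (connected as an extension of the connected groups $\T$ and $G$). Uniqueness of the Lie structure compatible with the Weyl topology is the standard fact that a locally compact group carries at most one compatible analytic structure, and analyticity of $t\mapsto(e,t)$ and $(x,t)\mapsto x$ follows because these are continuous homomorphisms of Lie groups, which are automatically analytic.

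Finally, for (\ref{WT-5}) I would record the correspondence $\kappa(x)=(x,b(x))\leftrightarrow\tau(x,y)=\frac{b(xy)}{b(x)b(y)}\sigma(x,y)$ between sections of $p_1$ and multipliers similar to $\sigma$, reading $\tau$ off from $\kappa(x)\kappa(y)\kappa(xy)^{-1}=(e,\overline{\tau(x,y)})$. If $\kappa$ is smooth (resp. analytic) then, since multiplication and inversion in the Lie group $G_\sigma$ and the embedding $t\mapsto(e,t)$ are smooth (resp. analytic) by (\ref{WT-4}), this formula exhibits $\tau$ as a smooth (resp. analytic) multiplier similar to $\sigma$. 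Conversely, if a smooth (resp. analytic) $\tau$ similar to $\sigma$ exists, then the product manifold structure on $G_\tau=G\times\T$ makes it a Lie group compatible with the Weyl topology, so by (\ref{WT-4}) it is the Lie structure and the tautological section $x\mapsto(x,1)$ is smooth (resp. analytic); pulling it back through the isomorphism $\phi^{-1}$ of (\ref{WT-3}), which is smooth (resp. analytic) by automatic regularity of continuous homomorphisms, yields the desired $\kappa$.
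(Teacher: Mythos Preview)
Your proposal is essentially correct as a sketch, but you should know that the paper's own ``proof'' consists entirely of citations to Varadarajan's book: parts (\ref{WT-1}) and (\ref{WT-3}) are \cite[Theorem~7.8]{Varadarajan1985}, part (\ref{WT-2}) is \cite[Corollary~7.10]{Varadarajan1985}, part (\ref{WT-4}) is \cite[Theorem~7.21]{Varadarajan1985}, and part (\ref{WT-5}) is \cite[Corollary~7.23]{Varadarajan1985}. There is no argument in the paper beyond these references.

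What you have written is therefore not a reproduction of the paper's proof but an outline of how the cited theorems themselves are proved, and in that role it is broadly accurate. A few points worth flagging: for the uniqueness in (\ref{WT-1}) and the automatic continuity you invoke in (\ref{WT-3}), Mackey's theorem requires the groups to be second countable (or at least standard Borel), an assumption that Varadarajan imposes throughout but that is not stated explicitly in the present paper; you correctly note this as the hypothesis ``the setting of Mackey's theory.'' In (\ref{WT-3}) the converse direction as you phrase it assumes the isomorphism respects the extension structure (i.e., commutes with the maps $\T\hookrightarrow G_\sigma\to G$); this is indeed the intended meaning of ``two extensions are isomorphic,'' but it is worth making explicit since a bare group isomorphism need not have the form $(x,t)\mapsto(x,b(x)t)$. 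Your two routes to (\ref{WT-4}) are both standard; Varadarajan's argument is closer to your second one, building the Lie structure from a locally analytic cocycle. The section/multiplier correspondence in (\ref{WT-5}) is exactly right.

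In short: the paper defers entirely to \cite{Varadarajan1985}, while you sketch the underlying arguments. Your sketch buys self-containment at the cost of the technical details (in particular the measurable-group-theory input for uniqueness), which is a reasonable trade for an expository treatment.
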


\begin{proof} 
(\ref{WT-1}) and (\ref{WT-3}) are \cite[Theorem 7.8]{Varadarajan1985}. 
(\ref{WT-2}) is \cite[Corollary 7.10]{Varadarajan1985}. 
(\ref{WT-4}) is \cite[Theorem 7.21]{Varadarajan1985}.
Finally, (\ref{WT-5}) is \cite[Corollary 7.23]{Varadarajan1985}.
\end{proof}  
Next we discuss the construction of a representation of $G_\sigma$
from a $\sigma$-representation $\rho$. Define
\[\rho_\sigma (x,t):= t\rho (x)\, .\]
Then a simple calculation shows that $\rho_\sigma$ is a homomorphism. 
If $\tau$ is similar to $\sigma$,  $\eta (x)=a(x)\rho (x)$ is the corresponding canonical $\tau$-representation and
$\varphi (x,t)=(x,\overline{a(x)}t)$ is the natural isomorphism $G_\sigma \simeq G_\tau$, then
\[\eta_\tau (\varphi (x,t))=\overline{a(x)}t\eta (x) = \overline{a(x)}a(x)t\rho (x)=\rho_\sigma (x,t)\]
as $a(x)\in\T$. Hence $\eta_\tau\circ \varphi =\rho_\sigma$.

The following is well known
for representations on a separable Hilbert space, see \cite[Theorem 7.16]{Varadarajan1985}.

\begin{theorem} \label{thm:weakcontinuity}
  Assume that the multiplier $\sigma$ is continuous in a neighborhood of
  $(e, e)$,  and that $G$ is connected.
  Then 
\begin{equation}\label{eq:rhoSigma}
G_\sigma \to \C\, ,\quad (x,t)\mapsto \langle \lambda ,\rho_\sigma (x,t) u\rangle=\overline t \langle \lambda ,\rho  (x)u\rangle
\end{equation}
is a Borel function
for all $\lambda\in\cS^*$ and $u\in\cS$. 
Furthermore the following holds:
\begin{enumerate}
\item The map in  (\ref{eq:rhoSigma}) is Borel.
\item The map in  (\ref{eq:rhoSigma}) is 
continuous if and only if $G\to \C$, $x\mapsto \langle \lambda, \rho (x)u\rangle$ is continuous in an open neighborhood around $e$.
\item Assume that $G$ is a Lie group. The map in  (\ref{eq:rhoSigma})  is smooth if and only if $G\to \C$, $x\mapsto \langle \lambda, \rho (x)u\rangle$ is smooth in an open neighborhood around $e$.
\end{enumerate}
\end{theorem}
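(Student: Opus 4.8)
The plan is to exploit that $\rho_\sigma$ is an honest homomorphism of $G_\sigma$, so that every matrix coefficient $F_{\lambda,u}(g):=\langle\lambda,\rho_\sigma(g)u\rangle$ obeys the covariance $F_{\lambda,u}(g_0 g)=\langle\,{}^t\rho_\sigma(g_0)\lambda,\rho_\sigma(g)u\rangle=F_{\mu,u}(g)$ with $\mu={}^t\rho_\sigma(g_0)\lambda\in\cS^*$. Since left translation $L_{g_0}$ is a homeomorphism of $G_\sigma$ (and, once $G$ is a Lie group, a diffeomorphism), global regularity of $F_{\lambda,u}$ will follow once it is established in a neighborhood of the identity for \emph{all} functionals. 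This reduces everything to the behavior near $(e,1)$, where Theorem~\ref{Weyltopology}(\ref{WT-2}), and Theorem~\ref{Weyltopology}(\ref{WT-4}) in the smooth case, let me compare $G_\sigma$ with a product. For the measurability claim I would simply note that $(x,t)\mapsto\bar t$ is continuous and $x\mapsto\langle\lambda,\rho(x)u\rangle$ is Borel by the definition of a projective representation, so their product is measurable for the product $\sigma$-algebra, which by Theorem~\ref{Weyltopology}(\ref{WT-1}) is exactly the Borel $\sigma$-algebra of the Weyl topology.

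For part (1) I would fix a neighborhood $U$ of $e$ as in Theorem~\ref{Weyltopology}(\ref{WT-2}), on which the Weyl topology agrees with the product topology of $U\times\T$. If the map in (\ref{eq:rhoSigma}) is continuous, then precomposing with the continuous inclusion $x\mapsto(x,1)$ of $U$ into $G_\sigma$ and using $\rho_\sigma(x,1)=\rho(x)$ yields continuity of $x\mapsto\langle\lambda,\rho(x)u\rangle$ on $U$. Conversely, if $x\mapsto\langle\lambda,\rho(x)u\rangle$ is continuous near $e$, then on $U\times\T$ the function $(x,t)\mapsto\bar t\langle\lambda,\rho(x)u\rangle$ is a product of continuous functions, hence continuous at $(e,1)$; the covariance above, valid for every $\mu\in\cS^*$, then propagates this to continuity at every $g_0\in G_\sigma$ because $L_{g_0}$ is a homeomorphism. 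It is essential here that the local hypothesis is available for all functionals, which is why the statement is proved simultaneously in $\lambda$ and $u$.

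For part (2), by Theorem~\ref{Weyltopology}(\ref{WT-4}) the group $G_\sigma$ carries a Lie structure (this is where connectedness is used) with $p_1\colon(x,t)\mapsto x$ and $\iota\colon t\mapsto(e,t)$ analytic. The decisive local point is that near $(e,1)$ the fiber coordinate $(x,t)\mapsto t$ is smooth; granting this, the forward direction follows by composing the smooth map $F$ with a smooth local section (equivalently, dividing the smooth $F$ by the smooth, nonvanishing factor $\bar t$), and the backward direction follows since on the resulting product chart $(x,t)\mapsto\bar t\langle\lambda,\rho(x)u\rangle$ is then a product of smooth functions near $(e,1)$, after which the same covariance argument propagates smoothness to all of $G_\sigma$ through the diffeomorphisms $L_{g_0}$.

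The main obstacle is exactly the smoothness of this fiber coordinate near the identity. Mere continuity of $\sigma$ only gives, via Theorem~\ref{Weyltopology}(\ref{WT-2}), a \emph{topological} product $U\times\T$, and a Borel similarity to an analytic multiplier need not respect the smooth structure; so I would first invoke the existence of a multiplier analytic near $(e,e)$ together with the uniqueness in Theorem~\ref{Weyltopology}(\ref{WT-4}) to identify the Lie structure near $(e,1)$ with the product structure in which $(x,t)\mapsto t$ is analytic. Once this identification is in place, both ``if and only if'' statements reduce to the elementary fact that multiplication by a smooth, $\T$-valued, nonvanishing factor neither creates nor destroys continuity or smoothness, combined with the translation argument of the first paragraph.
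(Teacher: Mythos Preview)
Your treatment of the Borel claim and of part~(1) is essentially the paper's argument: both reduce to the product neighborhood $U\times\T$ furnished by Theorem~\ref{Weyltopology}(\ref{WT-2}) and then propagate regularity by the homomorphism property of $\rho_\sigma$ (the paper translates the vector, you translate the functional---these are equivalent). Your Borel argument is in fact a bit cleaner than the paper's, since you use directly that the Weyl Borel structure is the product Borel structure (Theorem~\ref{Weyltopology}(\ref{WT-1})) rather than a translation argument.

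For part~(2) you correctly isolate the issue the paper's one-line proof glosses over: the product identification $U\times\T$ from Theorem~\ref{Weyltopology}(\ref{WT-2}) is only topological, so one cannot immediately assert that the set-theoretic fiber coordinate $(x,t)\mapsto t$ is smooth on $G_\sigma$. However, your proposed fix does not close the gap. Passing to a similar multiplier $\tau$ that is analytic near $(e,e)$ does identify the Lie structure of $G_\sigma$ with a product near the identity, but that product is the one in the $G_\tau$-coordinates $(x,\overline{a(x)}t)$, not in the original $(x,t)$-coordinates; the similarity function $a$ is a priori only Borel, so you have no control over the smoothness of $(x,t)\mapsto t$ on $G_\sigma$. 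Concretely, take $G=\R$, choose $f$ continuous but not smooth with $f(0)=0$, set $\sigma(x,y)=e^{i[f(x)+f(y)-f(x+y)]}$ (a continuous coboundary) and $\rho(x)=e^{-if(x)}$ on $\C$: then $\rho_\sigma$ is smooth on $G_\sigma\cong\R\times\T$, yet $x\mapsto\langle 1,\rho(x)1\rangle=e^{if(x)}$ is not smooth near $0$. The clean resolution is to take $\sigma$ itself smooth (indeed analytic) in a neighborhood of $(e,e)$, which the paper's earlier multiplier theorem guarantees is always possible for Lie groups; under that standing choice the set-theoretic chart \emph{is} a smooth chart near $(e,1)$, and then your argument---and the paper's ``in the same manner''---goes through verbatim.
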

\begin{proof}
The first statement follows from the fact that it is the product
    of the two Borel maps
    $$
    (x,t)\mapsto \overline{t}\qquad\text{and}\qquad
    (x,t)\mapsto \langle \lambda,\rho(x)u\rangle,
    $$
    and that the Borel $\sigma$-algebra on $G_\sigma$ is the
    product of the Borel algebras of $G$ and $\mathbb{T}$.

  If $\sigma$ is continuous around $(e, e)$
  there exists, according to Theorem \ref{Weyltopology}, part (2),
  an $e$-neighborhood $U$ in $G$ such that
  the Weyl topology on $U\times \T$ agrees with the product topology.
  Thus the map in (\ref{eq:rhoSigma}) is continuous on $U\times \T$.
  Let $z=(y , s )\in G_\sigma$.
  Then $U\times \T(y, 1)$ is a neighborhood
  around $z$ and for $(x , t)\in U\times \T$ we have 
  \[(x, t)\mapsto \lambda( \rho_\sigma((x, t)  (y, 1) )u )=  \lambda( \rho_\sigma((x, t) ) \rho_\sigma((y, 1) )u )\]
  which is continous in $(x ,t )$.
  Hence the map in (\ref{eq:rhoSigma}) is continuous.  
Let us check the opposite direction and assume that
the mapping (\ref{eq:rhoSigma}) is continuous. 
Restrict the mapping to a neighbourhood $U\times \T$ 
in the Weyl topology for which $U$ is open in $G$.
The mapping from $U$ to $U\times\T$ given by
$x\to (x,1)$ is then continuous, and therefore
$x\mapsto \langle \lambda,\rho_\sigma(x,1)u\rangle 
= \langle \lambda,\rho(x)u\rangle$
is continuous on the neighbourhood $U$.

Smoothness is verified in the same manner.\end{proof} 
 
\begin{assumption}
\label{assumption-cont}
{}From now on we will assume that multipliers are continuous in a neighborhood of $(e,e)$.
Moreover, we will assume that for every $\lambda\in \cS^*$ and every $u\in \cS$, 
there is a neighbourhood $U$ of $e$ on which
the mapping $x\mapsto \langle \lambda, \rho(x)u\rangle$ is continuous. Finally the group $G$ is assumed to be connected.
\end{assumption}

The following examples show that there are plenty of examples
for which this assumption is satisfied.

\begin{example}
  \label{ex:hilbertcontinuity}
  If $\fH$ is a Hilbert space and $\rho$ is a unitary projective
  representation, then Corollary 7.10 in \cite{Varadarajan1985} 
  ensures that there is a neighbourhood $U$ on which 
  $x\mapsto \langle \lambda, \rho(x)u\rangle $ is continuous.
  
  We also note that the representation $\rho_\sigma$ is unitary if and only if the $\sigma$-projective representation
  $\rho$ is unitary. Finally, the $\sigma$-representation $\rho$ is square-integrable if and only if $\rho_\sigma$-is square
integrable. This last statement follows from the fact that $\ip{u}{\rho_{\sigma}(x,t)u}=\overline{t}\ip{u}{\rho (x)u}$ and that $\T$ is compact with measure
$1$. Thus $\int_{G_\sigma}|\ip{u}{\rho_{\sigma}(x,t)u}|^2\, dxdt =\int_G |\ip{u}{\rho (x)u}|^2\, dx$
\end{example}

We now provide a version of the Duflo-Moore theorem for square integrable
projective representations. This result can be found in \cite{Aniello2006}.
\begin{theorem}\label{thm:duflomoore}
  Let $(\rho, \fH)$ be a square-integrable projective representation
  of $G$.
  \begin{enumerate}
  \item There exists a positive self adjoint operator $A_\rho$ which
    is defined on a dense subset $D$ of $\fH$, such that $u\in \fH$ is
    $\rho$-admissible \Iff \, $u\in D$. Moreover, the orthogonality
    relation
$$\int_G (v_1,\rho(x)u_1)\,(\rho(x)u_2,v_2)\, \,d x=(A_\rho{u_2},A_\rho{u_1})\,(v_1,v_2)$$
holds for all $u_1,u_2 \in D$ and $v_1,v_2 \in \fH$.
\item In addition, if $G$ is a unimodular, then $D=\fH$ and
  $A_\rho=c_\rho Id_{\fH}$. Thus, all vectors of $\fH$ are
  $\rho$-admissible and
$$\int_G (v_1,\rho(x)u_1)\,(\rho(x)u_2,v_2)\, \,d x=c_\rho^2({u_2},{u_1})\,(v_1,v_2)$$ 
for all $u_1,u_2 ,v_1,v_2 \in \fH $. The constant $1/c_\rho^2$ is called
the formal dimension of $\rho$.
\end{enumerate}
\end{theorem}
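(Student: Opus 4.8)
The plan is to reduce the statement to the classical Duflo--Moore theorem for genuine unitary representations by lifting $\rho$ to the associated representation $\rho_\sigma(x,t)=t\rho(x)$ of the Mackey group $G_\sigma=G\times\T$. First I would check that $\rho_\sigma$ satisfies all the hypotheses of the classical theorem. Since $t$ acts as a scalar, a closed subspace $W$ is $\rho_\sigma$-invariant if and only if it is $\rho$-invariant, so $\rho_\sigma$ is irreducible precisely when $\rho$ is; by Example \ref{ex:hilbertcontinuity} the representation $\rho_\sigma$ is unitary and square-integrable exactly when $\rho$ is. By Assumption \ref{assumption-cont} together with Theorem \ref{thm:weakcontinuity}, the matrix coefficients of $\rho_\sigma$ are continuous on $G_\sigma$ equipped with the Weyl topology of Theorem \ref{Weyltopology}, so $\rho_\sigma$ is a genuine irreducible, unitary, square-integrable representation of the locally compact group $G_\sigma$, to which the usual Duflo--Moore theorem applies.

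Applying that theorem to $\rho_\sigma$ with the left Haar measure $dx\,dt$ produces a positive self-adjoint operator $A_\rho$ with dense domain $D$, such that $u$ is $\rho_\sigma$-admissible if and only if $u\in D$, together with
\[
\int_{G_\sigma}\ip{v_1}{\rho_\sigma(x,t)u_1}\,\ip{\rho_\sigma(x,t)u_2}{v_2}\,dx\,dt
=\ip{A_\rho u_2}{A_\rho u_1}\,\ip{v_1}{v_2}.
\]
Substituting $\rho_\sigma(x,t)u=t\rho(x)u$ produces the factor $\bar t\,t=|t|^2=1$ in the integrand, and since $\T$ is compact with total mass $1$ the integration over the torus is trivial. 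This collapses the left-hand side to $\int_G\ip{v_1}{\rho(x)u_1}\,\ip{\rho(x)u_2}{v_2}\,dx$, which is exactly the asserted orthogonality relation; the admissibility equivalence carries over because, as computed in Example \ref{ex:hilbertcontinuity}, $\int_{G_\sigma}|\ip{u}{\rho_\sigma(x,t)u}|^2\,dx\,dt=\int_G|\ip{u}{\rho(x)u}|^2\,dx$. This establishes part (1) with $A_\rho$ the Duflo--Moore operator of $\rho_\sigma$.

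For part (2), if $G$ is unimodular then $G_\sigma=G\times\T$ is a product of unimodular groups ($\T$ being compact), hence itself unimodular. The unimodular case of the classical Duflo--Moore theorem then forces the operator to be a scalar, $A_\rho=c_\rho\,\mathrm{Id}_{\fH}$ with domain all of $\fH$, and the orthogonality relation specializes to the stated identity with $D=\fH$.

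The step I expect to be the main obstacle is the first one: verifying that $\rho_\sigma$ genuinely meets the standing hypotheses of the classical Duflo--Moore theorem. Specifically, one must promote the \emph{local} weak continuity granted by Assumption \ref{assumption-cont} to a strongly continuous unitary representation of $G_\sigma$ in its Weyl topology, and ensure that the ambient group is suitably well-behaved (e.g.\ second countable, inherited from $G$) so that the classical statement and the attendant theory of the associated positive operator apply verbatim. Once $\rho_\sigma$ is recognized as an honest square-integrable representation of $G_\sigma$, the remaining manipulations amount to the routine torus bookkeeping indicated above.
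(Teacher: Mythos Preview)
The paper does not actually prove this theorem; it simply records it with the remark ``This result can be found in \cite{Aniello2006}.'' Your reduction to the classical Duflo--Moore theorem via the Mackey lift $\rho_\sigma$ on $G_\sigma$ is correct and is the standard route to this statement, so there is nothing to compare.

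One small imprecision worth fixing: in part (2) you justify the unimodularity of $G_\sigma$ by calling it ``a product of unimodular groups,'' but $G_\sigma$ is a central extension, not the direct product $G\times\T$. The conclusion is still correct: the measure $dx\,dt$ is left-invariant by construction, and a short change of variables shows it is also right-invariant when $G$ is unimodular, since right translation by $(a,s)$ sends $(x,t)$ to $(xa,\overline{\sigma(x,a)}ts)$ and the cocycle factor is a rotation in $\T$ that preserves $dt$. With that adjustment your argument goes through.
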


\begin{example}
  We will now show that for a specific Gelfand triple
  $(\cS,\cH,\cS^*)$ from the original coorbit theory 
  \cite{Feichtinger1988,Feichtinger1989,Feichtinger1989a,%
    Christensen1996}
  the weak continuity
  requirement from Assumption \ref{assumption-cont} is automatically
  satisfied.  
  
  Let $(\rho,\cH)$ be a unitary $\sigma$-representation of $G$,
  and let $\rho_\sigma$ be the Mackey representation of $G_\sigma$.
  Then $(\rho_\sigma,\cH)$ is a (strongly) continuous representation 
  (follows from Example \ref{ex:hilbertcontinuity}
  and Theorem \ref{thm:weakcontinuity}).
  Moreover, assume that $\rho$ is an irreducible square integrable
  projective representation and assume there is a non-zero
  $u$ for which $(u,\rho(\cdot)u)$ is in $L^1_w(G)$ for some
  submultiplicative weight $w\geq 1$, see Remark \ref{rem:leftandright} bellow for definition.
  Let $\cS$ be the Banach space 
  $$
  \cS = \cH_w^1 = \{v\in \cH \mid (v,\rho(\cdot)u)\in L^1_w(G) \}
  $$
  equipped with the norm it inherits from $L^1_w(G)$.
  It is clear that for a cocycle $\sigma$ for 
  $\rho$ the space $\cH_w^1$ is isometrically isomorphic to the space 
  $$
  \{v\in \cH \mid (v,\rho_\sigma(\cdot)u)\in L^1_w(G_\sigma) \}
  $$
  when $w(x,t)=w(x)$.

 {}From standard coorbit theory \cite{Feichtinger1989}
  the representation $\rho_\sigma$ restricted to $H_w^1$ 
  is strongly continous (because left translation is continous
  on $L^1_w(G_\sigma)$). Therefore it follows immediately that
  $(x,t)\mapsto \langle \lambda,\rho_\sigma(x,t)v\rangle$ is
  continuous on $G_\sigma$ for $\lambda\in (H_w^1)^*$
  and $v\in H_w^1$ . Then from Theorem \ref{thm:weakcontinuity}
  it follows that $x\mapsto \langle \lambda,\rho(x)v\rangle$
  is continuous on a neighbourhood of $e$.
 \end{example}

\begin{example}
  Assume that $G$ is a Lie group.  
  We will show that the smooth vectors for a projective
  representation satisfy the weak continuity from
  Assumption \ref{assumption-cont}.

  The smooth vectors for the projective representation $\rho$
  on the Hilbert space $\cH$ is the collection of vectors
  $u$ for which $G\ni x\mapsto \rho(x)u\in\cH$ 
  is $C^\infty$ on a neighbourhood of $e$.
  By arguments similar to those in \cite{Poulsen1972} this is equivalent to
  the weak smoothness of the mapping $x\mapsto (v,\rho(x)u)$
  for any $v\in \cH$ on a neighbourhood of $e$. 
  By Theorem \ref{thm:weakcontinuity} we then get that
  $(x,t) \mapsto (v,\rho_\sigma (x,t)u)$ is smooth, which tells us
  that $u$ is a smooth vector for $\rho_\sigma $ by \cite{Poulsen1972}.
  This argument also works in the reverse direction, so
  we see that the smooth vectors for $\rho$ are the same 
  as the smooth vectors for $\rho_\sigma$, i.e. 
  $\cH_\rho^\infty=\cH_{\rho_\sigma}^\infty$. 
  Since $ \rho_\sigma (x,t)u = t\rho(x)u$ the derivatives in $t$ 
  are just multiples of the identity. Therefore the usual
  Fr\'echet space topologies on $\cH_\rho^\infty$ and $\cH_{\rho_\sigma}^\infty$
  are generated by the same differential operators,
  and therefore their topologies are equivalent.
  Therefore they 
  also have the same dual spaces $\cH_\rho^{-\infty}=\cH_{\rho_\sigma}^{-\infty}$.

  It is classical, see \cite{Warner1972}, that 
  $ \rho_\sigma$ restricted to $\cH_{ \rho_\sigma}$ is a continuous representation.
  Therefore $(x,t) \mapsto \langle \lambda, \rho_\sigma (x,t)u\rangle$
  is continuous on $G_\sigma$ for $\lambda\in \cH_{ \rho_\sigma }^{-\infty}$
  and $u\in \cH_{ \rho_\sigma }^{\infty}$. Therefore Theorem \ref{thm:weakcontinuity}
  ensures that $x \mapsto \langle \lambda,\rho(x)u\rangle$
  is continuous on a neighbourhood of $e$.
\end{example}

Let $u\in\fS$ be $\rho$-cyclic vector. We define the wavelet transform
$W^\rho_u:\fS^*\to C(G)$ by
$$W_u^\rho(\lambda)(x):=\langle \lambda,\rho(x)u\rangle.$$ 
In the following lemma we state the relation between a projective
representation and the corresponding representation of the Mackey
group. The proof is straightforward from the definition.
\begin{lemma} \label{lemma:waveletrelation} Let $(\rho,\fS)$ be a
  projective representation of $G$ and let $(\rho_\sigma,\fS)$ be the
  corresponding representation of $G_\sigma$. Then the following are
  true:
  \begin{enumerate}
  \item The vector $u\in \fS$ is $\rho$-cyclic \Iff \, $u$ is
    $\rho_\sigma$-cyclic.
  \item The wavelet transforms are related by
    $W_u^{ \rho_\sigma }(\lambda)(x,t)=\overline{t}W^\rho_u(\lambda)(x).$
  \end{enumerate}
\end{lemma}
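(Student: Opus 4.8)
The plan is to derive both statements directly from the identity $\rho_\sigma(x,t) = t\rho(x)$, which already appears in Theorem~\ref{thm:weakcontinuity}, together with equation~(\ref{eq:rhoSigma}). No auxiliary machinery is needed; the content is purely formal.

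For the second claim I would simply unwind the definition of the wavelet transform. Since $\rho_\sigma(x,t)u = t\rho(x)u$ and $\fS^*$ is the conjugate dual, the unimodular scalar $t\in\T$ pulls out of the dual pairing as its conjugate, so that
\[
W_u^{\rho_\sigma}(\lambda)(x,t) = \langle \lambda, \rho_\sigma(x,t)u\rangle = \langle \lambda, t\rho(x)u\rangle = \overline{t}\,\langle \lambda, \rho(x)u\rangle = \overline{t}\,W_u^\rho(\lambda)(x).
\]
This is exactly the asserted relation, and indeed it is nothing more than equation~(\ref{eq:rhoSigma}) rewritten in wavelet-transform notation.

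For the first claim I would compare the two linear spans. Because $\rho_\sigma(x,t)u = t\rho(x)u$ is a unimodular scalar multiple of $\rho(x)u$, every generator of $\Span\{\rho_\sigma(x,t)u \mid (x,t)\in G_\sigma\}$ already lies in $\Span\{\rho(x)u\mid x\in G\}$; conversely, taking $t=1$ gives the reverse inclusion. Hence the two spans coincide as subspaces of $\fS$, and therefore so do their closures. Consequently one span is dense in $\fS$ if and only if the other is, which is precisely the equivalence of $\rho$-cyclicity and $\rho_\sigma$-cyclicity of $u$.

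There is essentially no substantive obstacle here; the lemma is bookkeeping, as the authors note. The only two points that require a moment of care are the conjugate on $t$ in the second claim, which is forced by the conjugate-dual convention on $\fS^*$ recorded in (\ref{eq:rhoSigma}), and the observation in the first claim that adjoining the torus parameter does not enlarge the span, since a linear span is already closed under multiplication by arbitrary complex scalars.
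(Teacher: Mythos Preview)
Your proposal is correct and is precisely the straightforward unwinding of definitions that the paper has in mind; the paper itself omits the proof, remarking only that it follows directly from the definition, and your argument supplies exactly those details.
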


\section{Banach function spaces and sequence spaces}
\noindent
In this section we define twisted translation and twisted convolution,
and we summarize the assumptions we will place on a Banach space of
functions (or BF-space for short) throughout this paper.  We will also
introduce a collection of sequence spaces which will be used in
Sections \ref{sec:atomicdecomposition} and \ref{sec:bergmanspaces} in
order to formulate our results on atomic decompositions.

\begin{definition}
  Let $B$ be a Banach space over $\C$ of functions on $G$, and let
  $\sigma$ be a cocycle on $G$.  For a function $ f \in B$, we define
  twisted left translation by
$$ \ell_y^\sigma f(x):=\overline {\sigma(y, y^{-1}x)} f(y^{-1}x),$$ and  
we define twisted right translation by
$$ r_y^\sigma f(x):= {\sigma(x,y)} f(xy).$$

If the cocycle is $1$ we set $\ell_y = \ell^\sigma_y$ and
$r_y=r^\sigma_y$, and we drop the use of the word twisted.
\end{definition}

It is important to notice the following relations between the
translation operators and the wavelet transform
\begin{align*}
  \ell^\sigma_y W^\rho_u(\lambda)(x) &= W^\rho_u(\rho^*(y)\lambda)(x) \\
  r^\sigma_y W^\rho_u(\lambda)(x) &= W^\rho_{\rho(y)u}(\lambda)(x).
\end{align*}

We say that the BF-space $B$ is twisted left-invariant if
$\ell^\sigma_y f \in B $ for all $f \in B$ and
$f \mapsto \ell^\sigma_y f $ is bounded for all $y \in G$.
Analogously, we define twisted right-invariant spaces.  In the sequel
we will assume any BF-space to be twisted right- and left-invariant,
and that twisted left and right translations by elements $y$ in a
compact set $U$ are uniformly bounded in the sense that there is a
finite constant $C_U$ such that for any $y\in U$
\begin{equation}
  \label{eq:uniformboundrep}
  \| \ell^\sigma_y f\| \leq C_U \| f\|
  \text{ and }
  \| r^\sigma_y f\| \leq C_U \| f\|.
\end{equation}
We say that left translation, $\ell$, is continuous on $B$ if
for every $f\in B$ the mapping $y \mapsto \ell_y f$ is continous
$G\to B$. Right translation $r$ is defined to be continous in a similar manner.
In general we do not assume that
$\ell$ and $r$ are continuous on $B$ until we need to derive
atomic decompositions in sections \ref{sec:atomicdecomposition} and
\ref{sec:bergmanspaces}. Moreover, the twisted translations
$\ell^\sigma$ and $r^\sigma$ are generally not
continuous (not even near $e$), since $\sigma$ is not globally continuous.
A BF-space $B$ is called solid if
$|f|\leq |g|$ and $g\in B$ implies that $f\in B$.  If $B$ is a solid
space, then $B$ is twisted left or right invariant if and only if $B$
is left or right invariant.

\begin{remark}\label{rem:leftandright}
  A weight on $G$ is a measurable 
  function $w:G\to (0,\infty)$.  For a weight $w$
  we define the weighted Lebesgue spaces
  $$
  L^p_w(G) = \left\{ \text{$f$ measurable} \,\Big|\, \| f\|_{L^p_w}
    :=\left( \int |f(x)|^p w(x)\,dx \right)^{1/p} <\infty \right\}.
  $$
  The spaces $L^p_w(G)$ are clearly solid Banach function spaces.
  
  The weight $w$ is called submultiplicative, if $w(xy) \leq w(x)w(y)$
  for all $x,y\in G$.  When $w$ is submultiplicative the spaces
  $L^p_w(G)$ are left and right invariant for $1\leq p \leq \infty$ and left
  and right translation are continuous for $1\leq p<\infty$.
  Moreover, $\ell^\sigma$ and $r^\sigma$ are projective
  representations on $L^p_w(G)$ for $1\leq p<\infty$.  The cocycle for
  $\ell^\sigma$ is $\sigma$ while $r^\sigma$ has cocycle
  $\overline{\sigma}$.

  It is easily verified that if $w$ is bounded on bounded sets, then
  condition 
  \eqref{eq:uniformboundrep} holds on $L^p_w(G)$ for $1\leq p<\infty$.
  By direct inspection of the norm, it can also be verified for
  $p=\infty$.
\end{remark}

We now define sequence spaces related to a solid BF-space $B$.  Let
$U$ be a compact neighbourhood of the identity, and let $\{ x_i\}$ be
a countable $U$-dense and well-spread collection of elements in
$G$. Define the sequence space $\dot{b}$ by
$$
\dot{b} = \left\{ \{ c_i\}\subseteq \mathbb{C} \, \Big|\, \sum_i c_i
  1_{x_iU} \in B \right\},
$$
with norm
$$
\| \{ c_i\}\|_{\dot{b}} = \left\| \sum_i c_i 1_{x_iU} \right\|_B.
$$
For example, if $B=L^p(G)$, then $\dot{b}=\ell^p$.

In the remainder of this paper we will need the following
generalization of convolution in the presence of a cocycle.  For a
cocycle $\sigma$ we define the twisted convolution of functions $f$
and $g$ on $G$ by
$$f \# g(x):= \int\limits_G f(y) \ell_y^\sigma g(x) \, dy 
= \int\limits_G f(y) g(y^{-1}x)\overline{\sigma(y,y^{-1}x)} \, dy $$
whenever the integral exists.  If the cocycle is constant this is the same as usual group convolution
for which we reserve the special notation
$$
f *g(x):= \int\limits_G f(y) g(y^{-1}x) \, dy .
$$

  \begin{remark}
    \label{rem:weakconv}
    The twisted convolution always exists if $f,g$ are in $L^1$ or 
    if the product $f(y)g(y^{-1}x)$ is integrable in $y$.
    But sometimes it is necessary to consider more
    general cases. For example, twisted convolution might have to be
    defined weakly in the following way.
    If
    $0\leq \psi_n\leq 1$ is an increasing
    sequence of compactly supported continuous
    functions which are identically $1$ on nested compact sets $C_n$
    satisfying $\cup_n C_n = G$, then we might define the
    twisted convolution by
    \begin{equation}
      \label{eq:weakdefconv}
      f \# g(x)= \lim_{n\to\infty}
      \int_G \psi_n(y)f(y) \ell_y^\sigma g(x) \, dy 
    \end{equation}
    whenever the limit exists.
    In applications one has to verify that such a definition makes sense.
    Notice, that if we know that the product $f(y)g(y^{-1}x)$ is
    integrable in $y$, then Lebesgue's Dominated Convergence Theorem tells
    us that this weak definition agrees with
    $$f \# g(x):= \int\limits_G f(y) \ell_y^\sigma g(x) \, dy      . $$
  \end{remark}

\begin{example}[Locally Compact Abelian Groups]
 The best known example for twisted convolution is $\R^{2n}$ and its relation to time-frequency analysis and the
 Schr\"odinger representation of the Heisenberg group. This example
can be generalized to locally compact abelian groups, see \cite{Grochenig1998, 
Feichtinger2003a,Sielemann2016,Sielemann2016a,Matusiak2007} and
the references therein. For this we assume that $G=H\times \hH$ where $H$ is a locally compact abelian group and
$\hH$ is the dual group of continuous homomorphisms 
$\varphi : H \to \T$. The topology is the product topology and the 
product is defined as the product of each of the components. Define
\[\sigma ((x,\varphi), (y,\psi ))=\overline{\varphi  (y)}\, .\]
It is easy to see that $\sigma $ is a cocycle. The following
time-frequency representation is then an example of a $\sigma$-representation with continuous cocycle.
Define translation by $T_xf(y)=f(x^{-1}y)$ and modulation by
$M_\varphi f(y)=\varphi (y)f(y)$. Then
\[T_xM_\varphi  = \overline{\varphi (x)}M_\varphi T_x. \]
Thus, with $\rho (x,\varphi)=T_xM_\varphi$ we get
a projective representation, since
\begin{align*}
\rho (xy,\psi\varphi) &= T_{xy}M_{\psi \varphi}\\
&= T_xT_yM_\psi M_\varphi\\
&= \overline{\psi (y)}  T_xM_\psi T_y M_\varphi\\
&=\sigma ((x,\psi ),(y,\varphi )) \rho (x,\psi )\rho(y,\varphi)\, .
\end{align*}
The twisted convolution now becomes a simple generalization of the well known twisted convolution
$\R^n$:
\[ f \# g((x, \psi) )=  \int_G f((y,\phi ) g((y^{-1}x, \bar \phi \psi )) \psi (y)\overline{\psi (x)}  \, dyd\psi \]

We would like to point to the reference \cite{Feichtinger2003a} 
where modulation spaces are defined for Abelian groups. 
In the special case where $H$ is an abelian Lie group, then $\hH$ is also a Lie group, which 
could be discrete and countably infinite. Thus $H\times \hH$ is also
a Lie group and the results of this article, in particular 
the discretization, become available for the modulation spaces. 
\end{example}

\section{Coorbit Spaces}
\noindent
Let $ u\in \fS $ and let $B$ be a twisted left-invariant BF-space.  Define
$$\coBP:=\{ \lambda \in \fS^*\mid  W^\rho_u(\lambda) \in B \}  $$
with the norm
$$\|\lambda\|_{\coBP} := \|W^\rho_u(\lambda)\|_B.$$
We will now impose conditions which ensure that $\coBP$ is a Banach
space.  In the process we will demonstrate that these conditions
ensure that that the space
$$ B_u^\#:=\{f \in B \mid f \# W^\rho_u(u)=f \} $$
with norm inherited from $B$ is a reproducing kernel Banach space
isometrically isomorphic to $\coBP$.

A $\rho$-cyclic vector $u\in \fS$, is called a $\rho$-analyzing vector
for $\fS$ if the reproducing formula
$$W^\rho_u(\lambda)\#W^\rho_u(u)=W^\rho_u(\lambda)$$
holds for all $\lambda\in \fS^*$.

\begin{assumption} \label{assumptionrhocoorbit} Let $B$ be a twisted
  left-invariant BF-space on $G$.  Assume there exists a nonzero
  $\rho$-analyzing vector $u \in \fS$ satisfying the following
  continuity condition: The mapping
$$ B\times\fS \ni (f,v)\mapsto f\#W^\rho_v(u)(1)=\int_G f(y)W^\rho_v(u)^\vee(y)\overline{\sigma(y,y^{-1})}  \, \,d y \in \C $$  is continuous.
\end{assumption}

  \begin{remark}
    The twisted convolution
    $W^\rho_u(\lambda)\#W^\rho_u(u)$ for $\lambda \in \fS^*$ might only
    be defined in a weak sense as mentioned in Remark~\ref{rem:weakconv}.
    The conditions we have placed on the Banach space $B$ ensure that
    the twisted convolution $f\# W^\rho_u(u)$ exists as an integral
    for all $f\in B$.
    Therefore, for $\lambda \in \coBP$,
    the two definitions agree,
    since $W_u^\rho(\lambda)\in B$.
    This means that from this point on, whenever $\lambda\in\coBP$
    the twisted convolution can and should be interpreted as an integral.
    
    This observation is essential for
    producing atomic decompositions in section \ref{sec:atomicdecomposition}.
  \end{remark}

\begin{remark}\label{Rem:assumptionrhocoorbit}
  If $B=L^p_w(G)$ then the continuity condition will be a duality
  requirement. Specifically, Assumption \ref{assumptionrhocoorbit} is
  satisfied for $B=L_w^p(G)$ if the topology on $\fS$ is such that
 $$\fS \ni v\mapsto W^\rho_v(u)^{\vee} \in L^q_{{w^{-q/p}}}(G)$$ is continuous, where $\frac{1}{p}+\frac{1}{q}=1$. 
\end{remark}

The main result of this section is
\begin{theorem}\label{coorbitthm}
  Let $(\rho,\fS)$ be a projective representation of $G$, and let $B$
  be a twisted left-invariant BF-space on $G$.  Assume that $u\in \fS$
  is a $\rho$-analyzing vector satisfying Assumption
  \ref{assumptionrhocoorbit}.  Then
  \begin{enumerate}
  \item $W^\rho_u(v)\#W^\rho_u(u)=W^\rho_u(v)$ for $v \in \coBP$.
  \item The space $\coBP$ is a $\rho^*$-invariant Banach space.
  \item $W^\rho_u: \coBP \to B$ intertwines $\rho^*$ and
    $\ell^\sigma$.
  \item $W^\rho_u: \coBP \to B_u^\#$ is an isometric isomorphism.
  \item $\coBP = \{\rho^*(F)u \mid F\in B_u^\# \}$ when
      $\rho^*(F)u$ is defined by
      $\langle \rho^*(F)u,v \rangle = \int F(x) \langle \rho^*(x)u,v\rangle\,dx$.
  \end{enumerate}
\end{theorem}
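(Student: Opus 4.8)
The plan is to handle all four items through the single map $W^\rho_u$ together with the bounded functional that Assumption \ref{assumptionrhocoorbit} provides. Set $\Phi(f):=f\#W^\rho_u(u)(1)$; specializing the assumption to the fixed vector $u$ shows $\Phi$ is continuous and linear on $B$. The computations are organized by the two covariance identities stated just before the assumption, $\ell^\sigma_a W^\rho_u(\lambda)=W^\rho_u(\rho^*(a)\lambda)$ and $r^\sigma_a W^\rho_u(\lambda)=W^\rho_{\rho(a)u}(\lambda)$, and by the cocycle relation $\sigma(y,y^{-1})=\sigma(y,y^{-1}x)\sigma(y^{-1},x)$. The latter yields both the left-covariance $(\ell^\sigma_a f)\#g=\ell^\sigma_a(f\#g)$ and the identity
\begin{equation*}
(f\#g)(x)=\sigma(x^{-1},x)\big((\ell^\sigma_{x^{-1}}f)\#g\big)(1)=\big(f\#(r^\sigma_x g)\big)(1),
\end{equation*}
so that a twisted convolution evaluated anywhere on $G$ is reduced to its value at the identity, where $\Phi$ applies. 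In particular, taking $g=W^\rho_u(u)$ and using $r^\sigma_x W^\rho_u(u)=W^\rho_{\rho(x)u}(u)$ gives the pointwise formula $(f\#W^\rho_u(u))(x)=\sigma(x^{-1},x)\Phi(\ell^\sigma_{x^{-1}}f)$, which is well defined because each $\ell^\sigma_{x^{-1}}$ is bounded on $B$.

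Item (1) is then immediate: since $\coBP\subseteq\fS^*$ and $u$ is a $\rho$-analyzing vector, the reproducing identity holds for every $\lambda\in\fS^*$, in particular for $v\in\coBP$, and the displayed formula exhibits the convolution as the genuine element $W^\rho_u(v)\in B$. For item (3) and the $\rho^*$-invariance half of (2) I would simply invoke $W^\rho_u(\rho^*(a)v)=\ell^\sigma_a W^\rho_u(v)$: twisted left-invariance of $B$ forces the right-hand side into $B$, so $\rho^*(a)v\in\coBP$, and the same identity \emph{is} the intertwining statement. That $\|\cdot\|_{\coBP}$ is a genuine norm follows from injectivity of $W^\rho_u$: if $W^\rho_u(v)=0$ then $\langle v,\rho(x)u\rangle=0$ for all $x$, and $\rho$-cyclicity makes $\Span\{\rho(x)u\mid x\in G\}$ dense, whence $v=0$.

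The substance is item (4). Isometry onto its image is built into the definition of the norm, and item (1) places that image inside $B_u^\#$. For surjectivity, given $f\in B_u^\#$ I define a candidate functional by $v_f(w):=f\#W^\rho_w(u)(1)$ for $w\in\fS$. Assumption \ref{assumptionrhocoorbit}, now read in the $w$-variable, guarantees $v_f$ is continuous, and the appropriate linearity of $w\mapsto W^\rho_w(u)$ places $v_f\in\fS^*$. Applying the evaluation identity above with $w=\rho(x)u$ and $r^\sigma_x W^\rho_u(u)=W^\rho_{\rho(x)u}(u)$ gives
\begin{equation*}
W^\rho_u(v_f)(x)=v_f(\rho(x)u)=(f\#W^\rho_u(u))(x)=f(x),
\end{equation*}
where the last equality is the defining property of $B_u^\#$. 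Hence $W^\rho_u(v_f)=f\in B$, so $v_f\in\coBP$ with $\|v_f\|_{\coBP}=\|f\|_B$, and $W^\rho_u\colon\coBP\to B_u^\#$ is an isometric isomorphism.

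Finally the Banach (completeness) half of (2) reduces, through this isometry, to showing $B_u^\#$ is closed in $B$. Given $f_n\in B_u^\#$ with $f_n\to f$ in $B$, the pointwise formula gives $f_n(x)=(f_n\#W^\rho_u(u))(x)=\sigma(x^{-1},x)\Phi(\ell^\sigma_{x^{-1}}f_n)$; since $\ell^\sigma_{x^{-1}}$ and $\Phi$ are bounded on $B$, the right-hand side converges to $\sigma(x^{-1},x)\Phi(\ell^\sigma_{x^{-1}}f)=(f\#W^\rho_u(u))(x)$ for every $x$. Thus $f_n\to f\#W^\rho_u(u)$ pointwise while $f_n\to f$ in $B$, and identifying these two limits forces $f=f\#W^\rho_u(u)$, i.e. $f\in B_u^\#$. \textbf{This identification is the step I expect to be the main obstacle:} it requires that norm convergence in $B$ control the pointwise (almost everywhere) values, which I would secure from the standing BF-space property that $B$ embeds continuously into $L^1_{\mathrm{loc}}(G)$, so that a subsequence of $f_n$ converges almost everywhere; the examples $B=L^p_w(G)$ and the Bergman spaces treated later all satisfy this. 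Once $B_u^\#$ is closed it is complete, hence so is $\coBP$, completing (2).
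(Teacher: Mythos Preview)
Your argument is correct, but it takes a genuinely different route from the paper. The paper does \emph{not} work directly on $G$ at all: it lifts everything to the Mackey group $G_\sigma$, builds an auxiliary BF-space $\widehat{B}=\{F(x,t)=\overline{t}f(x):f\in B\}$ on $G_\sigma$, checks (Lemma~\ref{propertiesbhat} and Theorem~\ref{rhovspiassumption}) that the analyzing-vector assumption for $(\rho,B)$ is equivalent to the same assumption for $(\rho_\sigma,\widehat{B})$ with trivial cocycle, and then simply cites the already-proved representation case from \cite{Christensen2011}. All four items are then transported back through the isometry $\Lambda:B\to\widehat{B}$, $\Lambda f(x,t)=\overline{t}f(x)$, and the relation $W^{\rho_\sigma}_u(\lambda)(x,t)=\overline{t}W^\rho_u(\lambda)(x)$.

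What each approach buys: the paper's lift is short precisely because it outsources the substantive analysis (surjectivity onto the reproducing-kernel space, completeness) to the existing non-projective theory; it also makes the structural point that nothing new happens beyond the Mackey dictionary. Your direct proof is self-contained and makes the role of Assumption~\ref{assumptionrhocoorbit} transparent: the single continuous functional $\Phi$ and the formula $(f\#\phi)(x)=\sigma(x^{-1},x)\Phi(\ell^\sigma_{x^{-1}}f)$ carry the whole argument, and your construction $v_f(w)=f\#W^\rho_w(u)(1)$ is exactly the inverse of $W^\rho_u$ that \cite{Christensen2011} builds in the non-projective case. The concern you flag about identifying the pointwise limit with the $B$-limit is real but is the standard BF-space hypothesis (continuous embedding into $L^1_{\mathrm{loc}}$) that the cited reference also uses; the paper sidesteps having to state it here only because it inherits completeness wholesale from \cite{Christensen2011}.
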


From \cite{Christensen2011} it is known that the statements are true if the
cocycle $\sigma$ is constant and therefore $\rho$ is a representation.
Notice that in \cite{Christensen2011} the representation $\rho$ is
assumed strongly continuous, but that requirement can be replaced by
the weak continuity of $x\mapsto \langle \lambda,\rho(x)u\rangle$
for $\lambda\in\cS^*$ and $u\in\cS$ without modifications.
We will therefore prove Theorem \ref{coorbitthm} by
connecting it to  
coorbit theory for the representation 
$\rho_\sigma$ for an appropriate choice of BF-space $\widehat{B}$ on the
Mackey group.  It turns out that the space
$$\Bh:= \{ F : G\times\T\to \mathbb{C}\,\,\mid F(a,t)= \overline{t}f(a), f \in B\}$$ 
with norm $\| F \| _{\widehat{ B}}:=\| f \|_B$ is a good choice.

\begin{lemma} \label{propertiesbhat} If $G\times\T $ and $\Bh$ are
  defined as before, then the following relations hold.
  \begin{enumerate}
  \item The spaces $B$, $\Bh$ are isometrically isomorphic via
    $\Lambda f(x,t):= \overline{t}f(x)$.
  \item If the space $B$ is twisted left-invariant, then $\Bh$ is
    left-invariant.
  \item For $F \in \Bh$, we have
    $F*W^{\rho_\sigma}_u(u)(x,z)=\overline{z}\,f\,\#\,W_u^\rho(u)(x)$
    when $F(a,t)=\overline{t}f(a)$.
  \end{enumerate}
\end{lemma}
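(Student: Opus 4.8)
The plan is to handle the three parts in order, reducing parts (2) and (3) to explicit computations in the Mackey group $G_\sigma$ that hinge on a single cocycle identity. Part (1) is immediate from the definitions: the map $\Lambda f(x,t)=\overline{t}f(x)$ lands in $\Bh$ by construction, it is surjective because every $F\in\Bh$ has the form $F(x,t)=\overline{t}f(x)$, and it is injective because the underlying function is recovered by $f(x)=F(x,1)$ (so in particular the norm $\|F\|_{\Bh}=\|f\|_B$ is unambiguous). Linearity is clear and the isometry $\|\Lambda f\|_{\Bh}=\|f\|_B$ is the definition of the norm on $\Bh$, so $\Lambda$ is an isometric isomorphism.

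Before proving (2) and (3) I would record the group law in $G_\sigma$ that both use. Combining the inverse $(y,s)^{-1}=(y^{-1},\sigma(y,y^{-1})\overline{s})$ with the product $(a,p)(b,q)=(ab,\overline{\sigma(a,b)}pq)$ gives
\[
(y,s)^{-1}(x,t)=\bigl(y^{-1}x,\ \overline{\sigma(y^{-1},x)}\,\sigma(y,y^{-1})\,\overline{s}\,t\bigr).
\]
The key algebraic input is the cocycle relation $\sigma(xy,z)\sigma(x,y)=\sigma(x,yz)\sigma(y,z)$ specialized at $(x,y,z)=(y,y^{-1},x)$, which reads $\sigma(y,y^{-1})=\sigma(y,y^{-1}x)\,\sigma(y^{-1},x)$ and rearranges, since $|\sigma|\equiv 1$, to
\[
\sigma(y^{-1},x)\,\overline{\sigma(y,y^{-1})}=\overline{\sigma(y,y^{-1}x)}.
\]
For part (2), evaluate $F(x,t)=\overline{t}f(x)$ at the point above; conjugating the torus coordinate and inserting this identity collapses the cocycle factors and yields $\ell_{(y,s)}F(x,t)=\overline{t}\,(s\,\ell^\sigma_y f)(x)$. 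As $|s|=1$ and $B$ is twisted left-invariant, $s\,\ell^\sigma_y f\in B$, so $\ell_{(y,s)}F\in\Bh$, and $\|\ell_{(y,s)}F\|_{\Bh}=\|\ell^\sigma_y f\|_B\le C\|f\|_B$; hence $\Bh$ is left-invariant.

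For part (3), substitute $W^{\rho_\sigma}_u(u)(a,t)=\overline{t}\,W^\rho_u(u)(a)$ from Lemma \ref{lemma:waveletrelation} and expand the $G_\sigma$-convolution $F*W^{\rho_\sigma}_u(u)(x,z)$ as an integral over $G\times\T$. The torus variable decouples: the factor $\overline{s}$ from $F(y,s)$ cancels the factor $s$ produced by the conjugated torus coordinate of $(y,s)^{-1}(x,z)$, and $\int_\T ds=1$, so the $\T$-integral disappears and leaves an integral over $G$ carrying an overall $\overline{z}$. The same cocycle identity turns $\sigma(y^{-1},x)\overline{\sigma(y,y^{-1})}$ into $\overline{\sigma(y,y^{-1}x)}$, which is precisely the kernel of the twisted convolution $f\#W^\rho_u(u)$, giving $F*W^{\rho_\sigma}_u(u)(x,z)=\overline{z}\,f\#W^\rho_u(u)(x)$.

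The main obstacle is not conceptual but bookkeeping: correctly tracking the cocycle arguments after composing the inverse and product in $G_\sigma$ and conjugating the torus factors at each step. Once the identity $\sigma(y^{-1},x)\overline{\sigma(y,y^{-1})}=\overline{\sigma(y,y^{-1}x)}$ is isolated, both the translation formula in (2) and the twisted-convolution kernel in (3) follow from the same cancellation, so the real work is pinning down that identity and carrying out the $\T$-integration cleanly.
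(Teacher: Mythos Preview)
Your proposal is correct and follows essentially the same route as the paper: both proofs reduce parts (2) and (3) to a direct computation of $(y,s)^{-1}(x,t)$ in $G_\sigma$ and then collapse the resulting cocycle factors using the identity $\sigma(y^{-1},x)\,\overline{\sigma(y,y^{-1})}=\overline{\sigma(y,y^{-1}x)}$ (which the paper applies implicitly, while you state it explicitly). The only cosmetic difference is that you isolate the cocycle identity up front and invoke it twice, whereas the paper embeds the simplification inside each calculation.
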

 
\begin{proof}
  The first part is clear. 
  The second part follows from the following calculations:
  \begin{align*}
    \ell_{(a,w)}F(x,z)= & F(a^{-1}x,\overline{w}z \sigma(a,a^{-1})\overline{ \sigma(a^{-1},x)})\\
    = & F(a^{-1}x,\overline{w}z \sigma(a,a^{-1}x)) \\
    = & w \overline{z} \,\overline{\sigma(a,a^{-1}x)}  f(a^{-1}x)\\
    = & \overline{z}w \,\ell^\sigma_a f(x). 
  \end{align*} 
  Therefore, $B$ is twisted left-invariant $\Iff$ $\Bh$ is left
  invariant. 
  
  For the third part, we have
  \begin{align*}
    F*W_u(u)(x,z)
    =& \iint  F(y,w)W_u^{\rho_\sigma}(u)((y,w)^{-1}(x,z))dw dy\\
    =&\iint  F(y,w)W_u^{\rho_\sigma}(u)(y^{-1}x,\overline{w}z\sigma(y,y^{-1})\overline{\sigma(y^{-1},x)})dw dy\\
    =&\overline{z}\int  f(y)W^\rho_u(u)(y^{-1}x) \overline{\sigma(y,y^{-1})}\sigma(y^{-1},x)\,d y\\
    =&\overline{z}\,f\# W^\rho_u(u)(x).\qedhere
  \end{align*}
\end{proof}

\begin{lemma}
  A vector $u$ is $\rho$-analyzing if and only if u is
  $\rho_\sigma$-analyzing.
\end{lemma}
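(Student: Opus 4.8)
The plan is to reduce the statement to two facts already established: the correspondence of cyclic vectors in Lemma \ref{lemma:waveletrelation}(1) and the convolution identity in Lemma \ref{propertiesbhat}(3). Since an analyzing vector is by definition cyclic, I would first invoke Lemma \ref{lemma:waveletrelation}(1), which says that $u$ is $\rho$-cyclic if and only if it is $\rho_\sigma$-cyclic; this makes the cyclicity clauses of the two definitions match automatically. It then remains to prove that the two reproducing formulas are equivalent.

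To do this I would fix $\lambda\in\fS^*$ and set $f:=W^\rho_u(\lambda)\in C(G)$. By Lemma \ref{lemma:waveletrelation}(2) the corresponding $\rho_\sigma$-transform is $W^{\rho_\sigma}_u(\lambda)(x,t)=\overline{t}f(x)$, so $F:=W^{\rho_\sigma}_u(\lambda)$ is exactly a function of the form appearing in the definition of $\Bh$; likewise $W^{\rho_\sigma}_u(u)(x,z)=\overline{z}\,W^\rho_u(u)(x)$ has the same shape. Applying the pointwise identity of Lemma \ref{propertiesbhat}(3), whose proof is a purely formal manipulation of the defining integrals and does not actually require membership in $\Bh$, gives
$$W^{\rho_\sigma}_u(\lambda)*W^{\rho_\sigma}_u(u)(x,z)=\overline{z}\,\bigl(W^\rho_u(\lambda)\# W^\rho_u(u)\bigr)(x),$$
while the right-hand side of the $\rho_\sigma$-reproducing formula is simply $W^{\rho_\sigma}_u(\lambda)(x,z)=\overline{z}\,W^\rho_u(\lambda)(x)$.

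Comparing these two displays, the $\rho_\sigma$-reproducing formula $W^{\rho_\sigma}_u(\lambda)*W^{\rho_\sigma}_u(u)=W^{\rho_\sigma}_u(\lambda)$ becomes
$$\overline{z}\,\bigl(W^\rho_u(\lambda)\# W^\rho_u(u)\bigr)(x)=\overline{z}\,W^\rho_u(\lambda)(x)$$
for every $(x,z)\in G\times\T$. Since $\overline{z}\neq 0$ I can cancel it (equivalently, specialize to $z=1$), and the resulting identity holds for all $x\in G$ precisely when $W^\rho_u(\lambda)\# W^\rho_u(u)=W^\rho_u(\lambda)$, i.e. the $\rho$-reproducing formula. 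As $\lambda\in\fS^*$ was arbitrary, the two analyzing conditions coincide. I expect the only point needing care to be the remark that the identity in Lemma \ref{propertiesbhat}(3) is a formal pointwise statement valid for the functions at hand rather than something tied to the Banach-space structure of $\Bh$; once that is noted, the equivalence follows by the elementary cancellation of the nonzero factor $\overline{z}$.
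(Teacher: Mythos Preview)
Your proof is correct and follows essentially the same approach as the paper: first reduce cyclicity via Lemma~\ref{lemma:waveletrelation}(1), then use the identity $W^{\rho_\sigma}_u(\lambda)*W^{\rho_\sigma}_u(u)(x,t) = \overline{t}\, W^{\rho}_u(\lambda)\# W^{\rho}_u(u)(x)$ (the paper derives this by ``a straightforward calculation'' rather than citing Lemma~\ref{propertiesbhat}(3), but it is the same computation) and compare with $W^{\rho_\sigma}_u(\lambda)(x,t)=\overline{t}\,W^\rho_u(\lambda)(x)$. Your explicit remark that the convolution identity is a pointwise formal statement is a welcome clarification the paper leaves implicit.
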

\begin{proof}
  First, we know that $u$ is $\rho_\sigma$-cyclic if and only if $u$ is
  $\rho$-cyclic, since
  $\langle \lambda,\rho(x)u\rangle = t \langle
  \lambda,\rho_\sigma(x,t)u\rangle$.

  A straightforward calculation gives
$$
W^{\rho_\sigma}_u(\lambda)*W^{\rho_\sigma}_u(u)(x,t) = \overline{t}
W^{\rho}_u(\lambda)\# W^{\rho}_u(u)(x),
$$
and the claim follows from this.
\end{proof}

The following theorem provides the connection between the coorbit
theory that arises from representations \cite{Christensen2011} and the coorbit
theory that arises from projective representations.
\begin{theorem} \label{rhovspiassumption} 
  The triple $B$, $\rho$ and $u$ satisfy Assumption
  \ref{assumptionrhocoorbit} with cocycle $\sigma$ if and only if the
  triple $\widehat{B}$, $\rho_\sigma$ and $u$ satisfy Assumption
  \ref{assumptionrhocoorbit} with a constant cocycle.  Therefore
  $\coBP$ and $\coBh$ are simultaneously defined. Moreover,
  $\coBP =\coBh$ with the same norm.
\end{theorem}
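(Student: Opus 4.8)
The plan is to transport every ingredient of Assumption~\ref{assumptionrhocoorbit} across the isometric isomorphism $\Lambda\colon B\to\Bh$, $\Lambda f(x,t)=\overline{t}f(x)$, furnished by Lemma~\ref{propertiesbhat}(1), and then to observe that under this identification the two continuity maps are literally the same map. First I would record the three structural facts that make this work: $\Lambda$ is an isometric isomorphism, hence a homeomorphism, of $B$ onto $\Bh$; by Lemma~\ref{propertiesbhat}(2) the space $B$ is twisted left-invariant if and only if $\Bh$ is left-invariant; and by the preceding lemma $u$ is $\rho$-analyzing if and only if $u$ is $\rho_\sigma$-analyzing. Thus the only substantive point is the equivalence of the two continuity conditions.

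For that I would first extend the computation in Lemma~\ref{propertiesbhat}(3) from the analyzing vector $u$ to an arbitrary $v\in\fS$. Writing $F=\Lambda f$ and using $W^{\rho_\sigma}_v(u)(a,s)=\overline{s}\,W^\rho_v(u)(a)$ from Lemma~\ref{lemma:waveletrelation}, the identical calculation yields
\[
F * W^{\rho_\sigma}_v(u)(x,z)=\overline{z}\,\bigl(f \# W^\rho_v(u)\bigr)(x),
\]
because that calculation uses only the factorization of $W^{\rho_\sigma}_v(u)$ as $\overline{s}$ times a function on $G$, not the particular choice of argument. Since the cocycle on $G_\sigma$ is constant, the twisted convolution on the Mackey side is ordinary convolution, and evaluating at the unit $(1,1)$ of $G_\sigma$ (a unit by the normalization $\sigma(x,1)=\sigma(1,x)=1$) gives
\[
F * W^{\rho_\sigma}_v(u)(1,1)=\bigl(f \# W^\rho_v(u)\bigr)(1).
\]
Consequently the Mackey-side map $\Bh\times\fS\ni(F,v)\mapsto F*W^{\rho_\sigma}_v(u)(1,1)$ equals the map $(f,v)\mapsto f\#W^\rho_v(u)(1)$ precomposed with $\Lambda^{-1}\times\id$. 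As $\Lambda$ is a homeomorphism, one map is continuous exactly when the other is, which is precisely the asserted equivalence of Assumption~\ref{assumptionrhocoorbit} for the two triples; together with the structural facts above, this shows that $\coBP$ and $\coBh$ are defined at the same time.

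It then remains to identify the spaces and their norms. For $\lambda\in\fS^*$, Lemma~\ref{lemma:waveletrelation} gives $W^{\rho_\sigma}_u(\lambda)(x,t)=\overline{t}\,W^\rho_u(\lambda)(x)$, that is, $W^{\rho_\sigma}_u(\lambda)=\Lambda\bigl(W^\rho_u(\lambda)\bigr)$. By the very definition of $\Bh$ as the set of functions of the form $(x,t)\mapsto\overline{t}f(x)$ with $f\in B$, the membership $W^{\rho_\sigma}_u(\lambda)\in\Bh$ holds if and only if $W^\rho_u(\lambda)\in B$, so $\coBP$ and $\coBh$ consist of the same functionals. The norm identity of Lemma~\ref{propertiesbhat}(1) then gives $\|\lambda\|_{\coBh}=\|W^{\rho_\sigma}_u(\lambda)\|_{\Bh}=\|W^\rho_u(\lambda)\|_B=\|\lambda\|_{\coBP}$, so the two spaces coincide isometrically.

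The argument is essentially bookkeeping resting on the lemmas already established, so I do not expect a serious obstacle. The one place that demands care is the extension of Lemma~\ref{propertiesbhat}(3) to a general vector $v$ together with the correct evaluation point: the continuity condition on the constant-cocycle (Mackey) side must be read at the group unit $(1,1)\in G_\sigma$, and one must verify that this evaluation corresponds exactly to evaluation at $1\in G$ on the projective side. Once that identification is pinned down, the equivalence of the two continuity maps, and hence the whole theorem, is immediate.
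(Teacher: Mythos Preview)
Your proposal is correct and follows essentially the same route as the paper: the paper also reduces the continuity equivalence to the single identity $\iint F(x,z)W^{\rho_\sigma}_v(u)((x,z)^{-1})\,dz\,dx = f\#W^\rho_v(u)(1)$ (which is precisely your $F*W^{\rho_\sigma}_v(u)(1,1)=f\#W^\rho_v(u)(1)$), and then uses Lemma~\ref{lemma:waveletrelation} to identify the coorbit spaces. The only cosmetic difference is that the paper writes out the convolution integral explicitly rather than first extending Lemma~\ref{propertiesbhat}(3) to a general $v$ and then specializing to the identity.
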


\begin{proof}
  By Lemma~\ref{propertiesbhat} the space $\Bh $ is left invariant.
  Next, denote the wavelet transform related to the representation
  $\rho_\sigma$ by $W^{\rho_\sigma}_u$.

  Note that if $F(x,t)=\overline{t}f(x)$, then
  \begin{align*}
    \iint  F(x,z)W^{\rho_\sigma}_v(u)((x,z)^{-1})  \,dz dx 
    &=\iint  F(x,z)W^{\rho_\sigma}_v(u)((x^{-1},\overline{z}\sigma(x^{-1},x) ))  \,dz dx \\
    &=\iint \overline{z} f(x) W^{\rho}_v(u)(x^{-1}) z \overline{\sigma(x,x^{-1})} \,dzdx \\
    &= f\#W^\rho_v(u)(1).
  \end{align*}
  It follows that the continuity of
  $(f,v)\mapsto \int f(x)W^\rho_v(u)(x^{-1})
  \overline{\sigma(x,x^{-1})} \, dx$
  on $B\times\fS$ is equivalent to the continuity of
  $(F,v)\mapsto \iint F(x,z)W^{\rho_\sigma}_v(u)((x,z)^{-1}) \,dzdx$.

  Next, assume that $\lambda\in \fS$. Then, by
  Lemma~\ref{lemma:waveletrelation}, $\lambda\in \coBP$
  $\Leftrightarrow$ $W^\rho_u(\lambda)\in B$ $\Leftrightarrow$
  $W_u^{\rho_\sigma}(\lambda)\in \Bh$ $\Leftrightarrow $
  $\lambda\in \coBh$.
\end{proof}

Now we demonstrate our main result about the coorbit space constructed
by the twisted convolution.

\begin{proof}[Proof of Theorem~\ref{coorbitthm}]
  By Theorem \ref{rhovspiassumption}, the space $\Bh$ and u satisfy
  Assumption \ref{assumptionrhocoorbit}. So we can apply Theorem
  \ref{rhovspiassumption} to the space $\Bh$.

  (1) The identity is assumed true for all analyzing vectors $u$ and
  all functionals $\lambda$, and therefore it is also true for members
  of the coorbit space.

  (2) We know that the space $ \coBh = \coBP $ is
  $ \rho_\sigma^* $-invariant Banach space. So
  $ W_u(\rho_\sigma^*(y,w)\phi) \in \coBh $. On the other hand
$$ W^{\rho_\sigma}_u(\rho_\sigma^*(y,w)\phi)(x,z)
=\overline{z}w W^\rho_u(\rho^*(y)\phi)(x), $$
which implies that $W^\rho_u(\rho^*(y)\phi) \in B$.

(3) Using the fact that $ W^{\rho_\sigma}_u$ intertwines $\rho_\sigma^*$
with left translation, and $ \rho_\sigma^*(x,z)=z\rho^*(x) $. We have
\begin{align*}
  W^\rho_u(\rho^*(y)\phi)(x)
  =& \overline{w}z W^{\rho_\sigma}_u(\rho_\sigma^*(y,w)\phi)(x,z)
     = \overline{w}z \ell_{(y,w)}W^{\rho_\sigma}_u(\phi)(x,z)\\
  =& \overline{\sigma(y,y^{-1})}\sigma(y^{-1},x) \ell_yW^\rho_u(\phi)(x)\\
  =& \ell^\sigma_yW^\rho_u(\phi)(x) .
\end{align*}

(4) According to (1) and (3) in Lemma~\ref{propertiesbhat} the spaces
$B^\#_u$ and
$$
\widehat{B}_u = \{F\in \widehat{B} \mid F*W^{\rho_\sigma}_u(u) = F \},
$$
where convolution is on the group $G_\sigma$, are isometrically
isomorphic.  If we denote the isometrical isomorphism between $B_u$
and $\widehat{B}_u$ by $\Lambda$, then
$W^\rho_u = \Lambda ^{-1} W^{\rho_\sigma}_u : \coBP \to B_u^\#$ and the
result is obtained.

(5)
First, $\rho^*(F)u$ is well-defined due to Assumption~\ref{assumptionrhocoorbit}.
Also, if $F\in B_u^\#$, then
 $$
 W_u^\rho(\rho^*(F)u)(x) =
 \langle \rho^*(F)u,\rho(x)u  \rangle
 = F\# W_u^\rho(u)(x)
 = F(x).
 $$
 This shows that $\rho^*(F)u$ is in $\coBP$.
 If, on the other hand, $v\in \coBP$, then $W_u^\rho(v)$ is
 in $B$. Then $W_u^\rho(v)$ satisfies the
 twisted convolution reproducing formula 
 $W_u^\rho(v)\# W_u^\rho(u) = W_u^\rho(v)$,
 which means
 $$
 \langle v,\rho(x)u \rangle
 = \int W_u^\rho(v)(y) \langle \rho^*(y)u,\rho(x)u \rangle \,dy
 = \langle \rho^*(F)u,\rho(x)u \rangle.
 $$
 Since $u$ is cyclic, we see that $v=\rho^*(F)u$.
 \end{proof}

In the following theorem, we prove that the twisted coorbit space is
independent of the choice of the $\rho$-analyzing vector under some
assumptions.
\begin{theorem}\label{thm:u-indep}
  Assume that $u_1$ and
  $u_2$ 
  both satisfy Assumption \ref{assumptionrhocoorbit}, and the
  following properties are true for $i,j\in \{1,2\}$
  \begin{enumerate}
  \item there are nonzero constants $C_{i,j}$ such that
    $W^\rho_{u_i}(\lambda)\#W^\rho_{u_j}(u_i)=C_{i,j}W^\rho_{u_j}(\lambda)$
    for all $\lambda \in \fS^*$
  \item the mapping $B_{u_i}\ni f\mapsto f\# W^\rho_{u_j}(u_i)\in B$
    is continuous.
  \end{enumerate}
  Then $\Co_\rho ^{u_1}B=\Co_\rho ^{u_2}B$ with equivalent norms.
\end{theorem}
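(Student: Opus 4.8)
The plan is to prove the two set-theoretic inclusions $\Co_\rho^{u_1}B\subseteq\Co_\rho^{u_2}B$ and $\Co_\rho^{u_2}B\subseteq\Co_\rho^{u_1}B$ together with matching norm bounds; by the symmetry of the hypotheses under interchanging the indices, it suffices to treat one inclusion carefully and then swap the roles of $u_1$ and $u_2$. Recall that by definition $\lambda\in\Co_\rho^{u_i}B$ means $W^\rho_{u_i}(\lambda)\in B$, with $\|\lambda\|_{\Co_\rho^{u_i}B}=\|W^\rho_{u_i}(\lambda)\|_B$, so everything reduces to transporting membership and norm of the wavelet transform between the two analyzing vectors.

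First I would take $\lambda\in\Co_\rho^{u_1}B$, so that $f:=W^\rho_{u_1}(\lambda)\in B$. Since $u_1$ satisfies Assumption~\ref{assumptionrhocoorbit}, Theorem~\ref{coorbitthm}(1) applies and gives the reproducing identity $W^\rho_{u_1}(\lambda)\#W^\rho_{u_1}(u_1)=W^\rho_{u_1}(\lambda)$; hence $f$ lies in the reproducing subspace $B^\#_{u_1}$, which is precisely the domain of the map appearing in hypothesis (2). Applying hypothesis (2) with $i=1$, $j=2$ then shows that $f\#W^\rho_{u_2}(u_1)\in B$ and that $\|f\#W^\rho_{u_2}(u_1)\|_B\le C\,\|f\|_B$ for some finite constant $C$.

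Next I would invoke hypothesis (1) with $i=1$, $j=2$, which identifies this convolution explicitly as $f\#W^\rho_{u_2}(u_1)=W^\rho_{u_1}(\lambda)\#W^\rho_{u_2}(u_1)=C_{1,2}\,W^\rho_{u_2}(\lambda)$. Because $C_{1,2}\neq0$, this yields $W^\rho_{u_2}(\lambda)\in B$, i.e. $\lambda\in\Co_\rho^{u_2}B$, and combining with the previous estimate gives $\|\lambda\|_{\Co_\rho^{u_2}B}=\|W^\rho_{u_2}(\lambda)\|_B=|C_{1,2}|^{-1}\|f\#W^\rho_{u_2}(u_1)\|_B\le |C_{1,2}|^{-1}C\,\|\lambda\|_{\Co_\rho^{u_1}B}$. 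Running the identical argument with the roles of $u_1$ and $u_2$ interchanged (using hypotheses (1) and (2) with $i=2$, $j=1$, and the reproducing formula for $u_2$) produces the reverse inclusion together with a bound $\|\lambda\|_{\Co_\rho^{u_1}B}\le |C_{2,1}|^{-1}C'\,\|\lambda\|_{\Co_\rho^{u_2}B}$, establishing equality of the two spaces with equivalent norms.

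I expect the only subtle point to be the bookkeeping that places $f=W^\rho_{u_1}(\lambda)$ into the correct domain $B^\#_{u_1}$ before hypothesis (2) may be applied: this is exactly where the reproducing formula from Theorem~\ref{coorbitthm}(1) enters, and it relies on $u_1$ itself satisfying Assumption~\ref{assumptionrhocoorbit}. Everything else is a direct chaining of the cross-reproducing relation in (1) with the boundedness in (2), together with the nonvanishing of the constants $C_{i,j}$, which is what makes the final division legitimate and produces genuine (two-sided) norm equivalence rather than a mere one-sided estimate.
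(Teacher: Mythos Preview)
Your argument is correct. You give a direct, self-contained proof: starting from $\lambda\in\Co_\rho^{u_1}B$, you use Theorem~\ref{coorbitthm}(1) to place $W^\rho_{u_1}(\lambda)$ in $B^\#_{u_1}$, then chain hypothesis~(2) (boundedness of twisted convolution with the cross-kernel) with hypothesis~(1) (the cross-reproducing relation) to land in $\Co_\rho^{u_2}B$ with a norm estimate, and finish by symmetry. The paper takes a different route, consistent with its overall strategy of reducing questions about the projective representation $\rho$ to questions about the genuine representation $\rho_\sigma$ of the Mackey group $G_\sigma$: it verifies that the hypotheses~(1) and~(2) transfer from $(\rho,B)$ to $(\rho_\sigma,\widehat{B})$ via the identities $W^{\rho_\sigma}_{u_i}(\lambda)*W^{\rho_\sigma}_{u_j}(u_i)(x,t)=\overline{t}\,W^\rho_{u_i}(\lambda)\#W^\rho_{u_j}(u_i)(x)$ and $\|F*W^{\rho_\sigma}_{u_j}(u_i)\|_{\widehat{B}}=\|f\#W^\rho_{u_j}(u_i)\|_B$, then invokes the already-established analogue for representations (Theorem~2.7 of \cite{Christensen2011}) to conclude $\Co_{\rho_\sigma}^{u_1}\widehat{B}=\Co_{\rho_\sigma}^{u_2}\widehat{B}$, and finally uses $\Co_{\rho_\sigma}^{u_i}\widehat{B}=\Co_\rho^{u_i}B$. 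Your direct argument is in effect the content of that cited theorem, unfolded and carried out with the twisted convolution in place of ordinary convolution; it is more elementary and avoids the detour through $G_\sigma$, while the paper's version keeps the proof short by leaning on the Mackey correspondence built up in the preceding sections.
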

\begin{proof}
  We already know from \cite{Christensen2011} 
  that this theorem is true for 
  representations, i.e. when $\sigma=1$. The proof thus consists of
  connecting the statements for $\rho$ with similar statements for
  $\rho_\sigma$.

  Consider the space $\Bh$ and the Mackey group $G\times \T$. Since
  $u_1$ and $u_2$ are $\rho$-analyzing vectors for $\fS$ that satisfy
  Assumption \ref{assumptionrhocoorbit}, they are also
  $\pi_{\rho}$-analyzing vectors for $\fS$ that satisfy Assumption
  \ref{assumptionrhocoorbit} (see Theorem
  \ref{rhovspiassumption}). Also for $i,j\in\{1,2\}$ and
  $\lambda\in \fS^*$, we have
  \begin{align*}
    W^{\rho_\sigma}_{u_i}(\lambda)* W^{\rho_\sigma}_{u_j}(u_i)(x,t)=
    &\overline{t}W^\rho_{u_i}(\lambda)\#W^\rho_{u_j}(u_i)(x)\\
    =&\overline{t}C_{i,j}W^\rho_{u_j}(\lambda)(x)\\
    =&C_{i,j}W^{\rho_\sigma}_{u_j}(\lambda)(x,t)
  \end{align*}
  Moreover, the mapping
  $\Bh_{u_i}\ni F\mapsto F*W^{\rho_\sigma}_{uj}(u_i)\in \Bh$ is
  continuous, indeed,
  \[\|F *W^{\rho_\sigma}_{uj}(u_i)\|_{\widehat{B}}=\|f\#W^\rho_{u_j}(u_i)\|_B\leq
  C\|f\|_B=C\|F\|_{\Bh}\, .\]
  Therefore, by Theorem 2.7 in \cite{Christensen2011},
  $\Co_{\rho_\sigma}^{u_1}\Bh=\Co_{\rho_\sigma}^{u_2}\Bh$.  Since
  $\Co_{\rho_\sigma}^{u_i}\Bh=\Co_\rho^{u_i}B$, the result is obtained.
\end{proof}

\section{Atomic decompositions and frames}
\label{sec:atomicdecomposition}
\noindent
The atomic decompositions we provide build on sampling on the
reproducing kernel space $B_u^\#$.  To do so, we need to show that the
reproducing kernel does not vary too much under translation by
elements of a fixed compact neighbourhood.  The special form of the
reproducing kernel allow us to estimate such local variations using
smoothness of the analyzing vector $u$. Assume that $G$ is a connected
  Lie group with Lie algebra $\mathfrak{g}$, and that $\rho$ is a projective
  representation of $G$ on the Fr\'echet space $\fS$.

\begin{definition}
  A vector $u\in\fS$ is called $\rho$-weakly differentiable if for all
  $X\in\fg$ and for all $\lambda \in \fS^*$ the mapping
  $t\mapsto \langle \lambda,\rho(\exp(tX))u \rangle$ is
  differentiable and there is a $u_X\in \fS$ satisfying
  $$ 
  \langle \lambda,u_X \rangle = \frac{d}{dt}\Big|_{t=0} \langle
  \lambda,\rho(\exp(tX))u \rangle.
  $$
We then define $\rho(X)u=u_X$.
If for any $\lambda\in \fS$ 
    the mapping $x\mapsto \langle \lambda,\rho(x)u\rangle$ is
    differentiable up to order $n$, we say that the vector $u$
    is $\rho$-weakly differentiable up to order $n$.

  Similarly, a vector $\lambda\in\fS$ is called $\rho^*$-weakly
  differentiable if for all $X\in\fg$ and for all $u \in \fS$ the
  mapping $t\mapsto \langle \rho^*(\exp(tX))\lambda, u \rangle$ is
  differentiable. In this case we denote by $\rho^*(X)\lambda$ the
  distribution in $\fS^*$ satisfying
  $$ 
  \langle \rho^*(X)\lambda,u \rangle = \frac{d}{dt}\Big|_{t=0} \langle
  \rho^*(\exp(tX))\lambda, u \rangle.
  $$
 If for any $u\in \mathcal{S}$ 
    the mapping $x\mapsto \langle \rho^*(x)\lambda,u\rangle$ is
    differentiable up to order $n$, we say that the vector $\lambda$
    is $\rho^*$-weakly differentiable up to order $n$.
\end{definition}

\begin{remark} As pointed out by one of the referees,
    the existence of $\rho(X)u$ in the definition will be
    guaranteed by the Banach-Steinhaus theorem
    if the space $\fS$ is quasi-reflexive (and hence $\fS^*$ is barreled).
    We often work with smooth vectors for the projective representation $\rho$
    in which case the mapping $x\mapsto \rho(x)u$ is strongly differentiable.
   Therefore the existence of $\rho(X)u$ is guaranteed for most
   of our applications.

    Remembering that $\fS^*$ is equipped by the weak* topology, we
    know that $(\fS^*)^*=\fS$ is barreled and therefore the existence
    of $\rho^*(X)u$ is automatic. 
\end{remark}

\begin{assumption}
  \label{assumptionatomic}
  Let $B$ be a solid BF-space on $G$ on which left and right translation are
  continuous, and assume that $B$, $\rho$ and $u$ satisfy
  Assumption~\ref{assumptionrhocoorbit}.  Assume, moreover, that $u$
  is $\rho$-weakly and $\rho^*$-weakly differentiable up to order
  $n=\dim(G)$, and that for all finite subsets with $n$ elements,
  $\{ Y_1,Y_2,\dots,Y_n\}\subseteq \fg$ the mappings
  $$
  f\mapsto f*|W^\rho_u(\rho^*(Y_1)\rho^*(Y_2)\cdots\rho^*(Y_n)u)|
  \text{ and } f\mapsto
  f*|W^\rho_{\rho(Y_1)\rho(Y_2)\cdots\rho(Y_n)u}(u)|
  $$
  are bounded on the solid, left and right invariant BF-space $B$.
\end{assumption}

Notice, that all convolutions in this section and in
Appendix~\ref{appendixA} are expected to be
defined as proper integrals. We are no longer allowing weak
definitions as in Remark~\ref{rem:weakconv}, or at least we have to show
that the weak definition agrees with an integral.

\begin{theorem}
  \label{Thm:atomicdecomp}
  Let $u$ be a vector satisfying
  Assumption~\ref{assumptionatomic}.  Let $X_1,\dots,X_n$ be a fixed
  basis for $\fg$ and define
  $U_\epsilon=\{ e^{t_1X_1}\cdots e^{t_nX_n} \mid -\epsilon \leq t_k
  \leq \epsilon\}$. In the following we always choose 
  a cocycle $\sigma$ and 
  $\epsilon>0$ small enough so that $\sigma$ is $C^\infty$
  on a neighbourhood containing $U_\epsilon\times U_\epsilon$.
  \begin{enumerate}
  \item 

  Given a $U_\epsilon$-dense and well-spread sequence
  $\{ x_i\}\subseteq G$ and a $U_\epsilon$-BUPU $\{\psi_i \}$,
  the operators
  \begin{align*}
    T_1 f &= \sum_i f(x_i) \sigma(x,x^{-1}x_i)\psi_i\# W^\rho_u(u) \\
    T_2 f &= \sum_i \lambda_i(f) \ell_{x_i}^\sigma W^\rho_u(u) \\
    T_3 f &= \sum_i c_i f(x_i) \ell_{x_i}^\sigma W^\rho_u(u)
  \end{align*}
  where
  $\lambda_i(f) = \int
  f(y)\psi_i(y)\overline{\sigma(y,y^{-1}x_i)}\,dy$
  and $c_i = \int \psi_i(y)\,dy$, are well defined 
  from $B_u^\sigma$ to $B_u^\sigma$.
\item There is an $\epsilon$ small enough for which the operators
  $T_1,T_2,T_3$ are invertible for any $U_\epsilon$-dense and
  well-spread sequence $\{ x_i\}\subseteq G$ and any $U_\epsilon$-BUPU
  $\{\psi_i \}$. In this case
  the family $\{\rho^*(x_i)u\}$ is a Banach frame for
  $\coBP$ with respect to the sequence space $\dot{b}$, and the
  families
  $\{ \lambda_i \circ T^{-1}_2 \circ W^\rho_u\, , \rho^*(x_i)u\} $ and
  $\{ c_iT^{-1}_3 \circ W^\rho_u\, , \rho^*(x_i)u\} $ are atomic
  decompositions for $\coBP$ with respect to the sequence space
  $\dot{b}$. In particular, $\gamma\in \coBP$ can be reconstructed by
  \begin{align*}
            \gamma &= (W^\rho_u)^{-1} T^{-1}_1 \Big(\sum_i W^\rho_u(\gamma)(x_i) \psi _i \# W^\rho_u(u)\Big)\\
            \gamma &= \sum_i \lambda_i\big(T^{-1}_2 W^\rho_u(\gamma)\big)\rho^*(x_i)u\\  
            \gamma &= \sum_i c_i\,\,(T^{-1}_3 W^\rho_u(\gamma))\,\rho^*(x_i)u 
          \end{align*}
          with convergence in $\fS^*$. The convergence is in $\coBP$ if
          $C_c(G)$ is dense in $B$.
\end{enumerate}
\end{theorem}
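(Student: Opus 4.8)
The plan is to prove Theorem~\ref{Thm:atomicdecomp} in the same spirit as Theorems~\ref{coorbitthm} and~\ref{rhovspiassumption}: transport the entire problem to the Mackey group $G_\sigma$ and invoke the atomic decomposition machinery for genuine (non-projective) representations from \cite{Christensen2011}, applied to $\rho_\sigma$ on the space $\Bh$. The constraint that $\sigma$ and $\epsilon$ be chosen so that $\sigma$ is $C^\infty$ on a neighbourhood of $U_\epsilon\times U_\epsilon$ is exactly what makes this reduction legitimate: by Theorem~\ref{Weyltopology}(2),(4) the Weyl topology on $U_\epsilon\times\T$ agrees with the product topology and $G_\sigma$ carries a compatible Lie group structure, so all local smoothness and oscillation estimates on $G_\sigma$ may be carried out in the product chart $U_\epsilon\times\T$.

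First I would check that the hypotheses of Assumption~\ref{assumptionatomic} transfer from $(\rho,B,u)$ on $G$ to $(\rho_\sigma,\Bh,u)$ on $G_\sigma$. By Lemma~\ref{propertiesbhat} the space $\Bh$ is a solid, left- and right-invariant BF-space on $G_\sigma$ with continuous translations, and by Theorem~\ref{rhovspiassumption} it satisfies Assumption~\ref{assumptionrhocoorbit}. Because $G_\sigma$ is locally a direct product near $e$, one has $\rho_\sigma(\exp tX,1)=\rho(\exp tX)$ for $X\in\fg$, so the $\rho$-weak and $\rho^*$-weak differentiability of $u$ transfers to $\rho_\sigma$ and $\rho_\sigma^*$ with $\rho_\sigma(X)u=\rho(X)u$; the single extra central direction acts by the scalar derivative $i\,\mathrm{id}$, so it imposes no new smoothness and order $n=\dim(G)$ remains sufficient. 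Finally, the boundedness on $B$ of the maps $f\mapsto f*|W^\rho_u(\cdots)|$ becomes boundedness on $\Bh$ of the corresponding maps built from $W^{\rho_\sigma}_u(\cdots)$ via the isometry $\Lambda$ and the convolution identity of Lemma~\ref{propertiesbhat}(3) (now ordinary convolution on $G_\sigma$). Thus Assumption~\ref{assumptionatomic} holds for $(\rho_\sigma,\Bh,u)$.

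Next I would lift the discretization data. A $U_\epsilon$-dense, well-spread sequence $\{x_i\}\subseteq G$ yields the $(U_\epsilon\times\T)$-dense, well-spread sequence $\{(x_i,1)\}\subseteq G_\sigma$, and a $U_\epsilon$-BUPU $\{\psi_i\}$ lifts to the $(U_\epsilon\times\T)$-BUPU $\widehat{\psi}_i(x,t):=\psi_i(x)$, which is admissible because $\T$ is compact and the cover $\{x_iU_\epsilon\times\T\}$ is locally finite. Applying the atomic decomposition theorem of \cite{Christensen2011} to $\rho_\sigma$ on $\Bh$ then produces, for $\epsilon$ small enough, invertible operators $\widehat T_1,\widehat T_2,\widehat T_3$ on $\Bh_u$, the Banach frame $\{\rho_\sigma^*(x_i,1)u\}$ for $\coBh$ with respect to $\dot b$, and the associated atomic decompositions and reconstruction formulas, with convergence in $\fS^*$ and in $\coBh$ when $C_c(G_\sigma)$ is dense in $\Bh$.

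The final step is to push these statements back to $G$. Under $\Lambda f(x,t)=\overline{t}f(x)$ together with $W^{\rho_\sigma}_u(\lambda)(x,t)=\overline{t}W^\rho_u(\lambda)(x)$, $\rho_\sigma^*(x_i,1)=\rho^*(x_i)$, and $\ell_{(a,w)}F(x,z)=\overline{z}w\,\ell^\sigma_a f(x)$ (from Lemma~\ref{propertiesbhat}), the operators $\widehat T_k$ become precisely $T_1,T_2,T_3$, with the phase factor $\sigma(x,x^{-1}x_i)$ in $T_1$ and the factor $\overline{\sigma(y,y^{-1}x_i)}$ in $\lambda_i$ emerging exactly from the twisted translation and twisted sampling functionals; the frame $\{\rho_\sigma^*(x_i,1)u\}=\{\rho^*(x_i)u\}$ and the three reconstruction formulas then descend verbatim, and density of $C_c(G_\sigma)$ in $\Bh$ is equivalent, via $\Lambda$, to density of $C_c(G)$ in $B$. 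I expect the genuine difficulty to lie not in the representation-theoretic input—which is quoted from \cite{Christensen2011}—but in the careful bookkeeping of the cocycle factors: matching the twisted operators $\ell^\sigma$, $r^\sigma$ and the explicit multipliers $\sigma(x,x^{-1}x_i)$, $\overline{\sigma(y,y^{-1}x_i)}$ against the untwisted $G_\sigma$-operators, and verifying that the BUPU lift and the well-spreadness survive the passage between the Weyl and product topologies, which is precisely why $\epsilon$ is constrained so that $\sigma$ is $C^\infty$ on $U_\epsilon\times U_\epsilon$.
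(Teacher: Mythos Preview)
Your overall strategy---transport everything to the Mackey group $G_\sigma$ and invoke the atomic decomposition theorems of \cite{Christensen2012} for the genuine representation $\rho_\sigma$---is exactly the route the paper explicitly mentions and then \emph{declines} to take. The paper's proof instead works directly on $B$: in the appendix it establishes pointwise bounds of the form $|T_kf-f|\le |f|*\osc^{\ell^\sigma}_{U^{-1}}\phi$ and $|f|*\osc^{r^\sigma}_{U^{-1}}\phi$ (with $\phi=W^\rho_u(u)$) by explicit cocycle bookkeeping, and then uses the weak differentiability of $u$ together with the smoothness of $\sigma$ on $U_\epsilon\times U_\epsilon$ to show that $\|f*\osc_{U_\epsilon}\phi\|_B\le C_\epsilon\|f\|_B$ with $C_\epsilon\to 0$, so that the $T_k$ are invertible by a Neumann series. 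Thus your proposal and the paper's argument are genuinely different.

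Your specific implementation, however, has a real gap. You claim, citing Lemma~\ref{propertiesbhat}, that $\Bh$ is a solid BF-space on $G_\sigma$ with continuous translations; but that lemma asserts neither property, and in fact $\Bh$ is \emph{not} solid: every element has the rigid form $F(x,t)=\overline{t}f(x)$, so a measurable function dominated by some $|F|\in\Bh$ will almost never belong to $\Bh$. Solidity is used essentially in the convolution and oscillation estimates underlying the theorems of \cite{Christensen2012}, so those results cannot be applied to $\Bh$ as written. This is precisely the obstruction the paper flags when it says that to argue via $G_\sigma$ one must first ``define an appropriate solid BF-space on $G_\sigma$'' and points to the separate constructions in \cite{Christensen1996} and \cite{Darweesh2015}; your $\Bh$ is not such a space. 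A related difficulty is continuity of translation: under $\Lambda$, left translation on $\Bh$ corresponds to the \emph{twisted} translation $\ell^\sigma$ on $B$, and the paper explicitly warns that $\ell^\sigma$ and $r^\sigma$ are in general not continuous because $\sigma$ is not globally continuous---so the hypothesis of continuous translations on $\Bh$, needed to quote \cite{Christensen2012}, is also unjustified. If you want to salvage the Mackey-group route you must replace $\Bh$ by a genuinely solid space on $G_\sigma$ and redo the verification; otherwise the direct oscillation argument on $B$ carried out in the appendix is the cleaner path.
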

 
These results can be proven by applying Theorems 3.4, 3.6 and 3.7
from \cite{Christensen2012} to the representation $\rho_\sigma$ 
of the Mackey group $G_\sigma$. 
This requires us to 
define an appropriate solid BF-space on $G_\sigma$. There
are many more or less natural choices for such BF-spaces and two
different approaches have been presented in \cite{Christensen1996} and
\cite{Darweesh2015}. The more natural choice is found in
\cite{Darweesh2015} where a minimal extension, naturally isomorphic to the original choice, is considered. The
drawback is, that the space is not solid.
In this paper we wish to avoid making this choice
and to work directly on the group $G$ and the Banach 
space $B$.
One of the 
  advantages is that the atoms are obtained via sampling at points
  in the group $G$ and in a reproducing kernel subspace
  of $B$.
  This makes the apprach presented here natural.
  In \cite{Christensen1996,Darweesh2015} the atoms
  are instead obtained from sampling at points in the extended group $G_\sigma$
  on an reproducing kernel subspace of functions
  on $G_\sigma$. 
  Moreover, the other two approaches
  rely on the topology of $G_\sigma$ being the product
  topology of $G\times\T$,
  a property which can only be ensured if the multiplier is continuous.
  This property is utilized when choosing the sample points in $G_\sigma$
  (see for example the proof of
  Theorem 6.1 on p. 1305 of \cite{Christensen1996}). The present approach
  avoids this assumption on $\sigma$.
The calculations differ
non-trivially from \cite{Christensen2012} by the occurrence of the cocycle, and the
details are carried out in the appendix.

The result above is focused on using differentiable vectors
  as atoms. It should be mentioned that this
  is not strictly necessary, since
  the results in the appendix can be used for vectors that are
  not necessarily differentiable. It is possible to obtain
  atoms for non-differentiable vectors, as long as it can be shown
  that convolution with local oscillations of the kernel $W_u^\rho(u)$
  is bounded on the space $B$ (see Corollary~\ref{cor:convosc}) . We intentionally left this to the appendix,
  since smooth vectors suffice for our intended application.

\section{Bergman Spaces on the Unit Ball}\label{sec:bergmanspaces}
\noindent
Recently, in \cite{Christensen2016}, the first and the third authors together with  
K. Gr\"ochenig,  obtained atomic decompositions for
Bergman spaces on the unit ball
in $\mathbb{C}^n$ through a finite covering group of the group $\SU(n,1)$ with
the restriction that the representation parameter $s>n$ had to be rational. 
As a special case, atomic decompositions of Bergman spaces
through the group $\SU(n,1)$ is valid for integer values of the
parameter $s>n$. Overcoming the restriction on the parameter
was one reason for introducing a coorbit theory for
projective representations.

We dedicate this chapter to generating Banach frames and atomic
decompositions of Bergman spaces on the unit ball via the group
$\SU(n,1)$. 
For more references we encourage the
reader to see \cite{Chebli2004,Faraut1990,HarishChandra1955,Knapp1986,Wallach1979,Zhu2005}.

\subsection{Facts about Bergman Spaces on the Unit Ball}

In this section we collect facts about Bergman spaces on the unit
ball. Let $\mathbb{C}^n$ be equipped with the usual inner product
$(z,w)=z_1\overline{w_1}+z_2\overline{w_2}+...+z_n\overline{w_n}$ and
define the unit ball by
$$\mathbb{B}^n:=\left\{z\in \mathbb{C}^n \mid |z|^2:=
  |z_1|^2+|z_2|^2+...+|z_n|^2<1 \right\}.$$
Let $\Id v$ denote the normalized volume measure on the unit ball upon
identifying $\mathbb{C}^n$ with $\mathbb{R}^{2n}$.
 For $\alpha>-1$,
define the measure
$\Id v_\alpha (z):= C_\alpha (1-|z|^2)^\alpha \Id v(z)$, where
$C_\alpha=\frac{\Gamma(n+\alpha+1)}{n!\Gamma(\alpha+1)}$
makes $\Id v_\alpha$ a probability
measure.  Notice that the measure $\Id v_\alpha$ is finite measure on
$\mathbb{B}^n$ \Iff\, $\alpha >-1$.

We define the $\alpha$-weighted $L^p$ space on the unit ball
as
$$L^p_\alpha(\mathbb{B}^n)=\{f:\B\to \mathbb{C}\mid \int_{\B} |f(z)|^p
\Id v_\alpha(z)<\infty\}$$
with norm
$$\|f\|_{L^p_\alpha}=\left( \int_{\mathbb{B}^n} |f(z)|^p\, \Id
  v_\alpha(z)\right)^{1/p},$$
where $1\leq p<\infty$. For $\alpha>-1$, we define the weighted
Bergman spaces on the unit ball to be
$$A^p_\alpha(\B):= L^p_\alpha(\B)\cap \mathcal{O}(\B)$$
with norm inherited from $L^p_\alpha(\B)$, where $\mathcal{O}(\B)$ is
the space of holomorphic functions on the unit ball. We have the
condition $\alpha>-1$ to construct a non-trivial Bergman spaces, in
fact, if $\alpha\leq -1$, then the only holomorphic function in
$L^p_\alpha(\B)$ is the zero function.

As we have seen in the special case on the unit disc, Bergman spaces
are closed subspaces of $L^p_\alpha(\B)$, $i.e.$, Bergman spaces are
Banach spaces. In the case $p=2$, the space $A^2_\alpha(\B)$ is a
Hilbert space with the inner product
$$(f,g)_\alpha=\int_{\B}f(z)\overline{g(z)}\, \Id v_\alpha(z).$$
The orthogonal projection of $L^2_\alpha(\B)$ on the space
$A^2_\alpha(\B)$ is given by
$$P_\alpha f(z)=\int_{\B} f(w)K_\alpha(z,w)\, \Id v_\alpha(w),$$
where $$K_\alpha(z,w)=\frac{1}{(1-(z,w))^{n+1+\alpha}} $$ is the
reproducing kernel for $A_\alpha^2(\B) $.

The group $\SU(n,1)$ is defined to be the group of all
$(n+1)\times (n+1)$-matrices $x$ of determinant $1$ for which
$x^*J_{(n,1)}x=J_{(n,1)}$, where
$$J_{(n,1)}=
\begin{pmatrix}
  -I_n & 0\\
  0   &1 
\end{pmatrix}.
$$
We always write $x\in \SU(n,1)$ in the block form 
$$x=
\begin{pmatrix}
           A & b\\
           c^t   &d 
\end{pmatrix},
$$
where $A$ is an $n \times n$ matrix, and $b$, $c$ are vectors in $\C^n$, and $d\in \C$. Simple calculations show that 
$$x^{-1}= 
\begin{pmatrix}
  A^* & -\overline{c}\\
  -\overline{b}^t   &\overline{d}
\end{pmatrix}.
$$ The identity $xx^{-1}=I$ implies
\begin{equation}\label{equ: d2-b2=1}
  |d|^2-|b|^2=1
\end{equation}
Form now on, we write $G=\SU(n,1)$. This group acts
transitively on $\B$ by
$$x \cdot z={(Az+b)}{((c,\overline{z})+d)^{-1}}.$$
If we define the subgroup $K$ of $G$ as
$$K=\left\lbrace \left.
  \begin{pmatrix}
    k&0\\
    0& \overline{\det (k)}
  \end{pmatrix}
  \, \right|\,   k\in U(n)\,\right\rbrace,$$
then the stabilizer of the origin $0 \in \C^n$ is
$K$ and $\B\simeq G/K$. It follows that there is a
one to one correspondence between the $K$-right
invariant functions on $G$ and the functions on
$\B$ via
$$ \ft(x)=f(x\cdot 0).$$
  This correspondence relates the $G$-invariant measure on $\B$, which is
  given by $dv_{-n-1}(z)$, to the measure on the group $G$.
  The compactness of $K$ ensures that we
can normalize the measure on $G$ so that, for any  $K$-right invariant
function $\ft$ on $G$, we have
\begin{equation}\label{eq: integral over G in term of Bn}
  \int_G\ft(x) \, dx= \int _{\B} f(z)\,dv_{-n-1} (z).
\end{equation}
  
Define the
weighted $L^p_\alpha$ spaces on $G$ by
$$L^p_\alpha(G)=\left\lbrace F:G\to \C\, \left|\, 
  \|F\|_{L^p_\alpha(G)}:=\left(C_\alpha \int_G |F(x)|^p\, (1-|x\cdot
    o|^2)^\alpha\, dx\right)^{1/p}<\infty\right. \right\rbrace $$
 
If we denote by $L^p_\alpha(G)^K$ the space of $K$-right invariant
functions in the space $L^p_\alpha(G)$, then it is easy to see that
$L_\alpha^p(\B)$ and $L^p_{\alpha+n+1}(G)^K$ are isometric. That is,
\begin{equation}\label{eq: L(B) and L(G) isometric}
  \|f\|_{L_\alpha^p(\B)}=\|\ft\|_{L^p_{\alpha+n+1}(G)}.
\end{equation}
For $s>n$, the action of $G$ on $\B$ defines an irreducible unitary
projective representation of $G$ on the space $\fH_s=A^2_{s-n-1}$ by
\begin{equation}\label{eq: proj rep of SU(n,1) on H}
  \rho_s(x)f(z)=(-(z,b)+\overline{d})^{-s}f(x^{-1}\cdot z),
\end{equation}
which also defines a representation for the
universal covering group of $G$. 
{}From now on we assume that a cocycle $\sigma$ has been chosen for
the projective representation $\rho_s$. Note, that
since $\rho_s$ is a local representation, the cocycle
can be chosen equal to one on a neighbourhood of $e\times e$
(see also Theorem 7.1 in \cite{Bargmann1954}).

We denote the twisted wavelet transform on $\fH_s$ by
$$W^{\rho_s}_u(\lambda)(x)=(\lambda,\rho_s(x)u)_{\fH_s}.$$
Let $\mathcal{P}_k$ be the space of all homogeneous polynomials of
degree $k$ on $\C^n$. In the following theorem we summarize some
properties of the space of smooth vectors for $\rho_s$ and its
conjugate dual space, which will be the candidate Fr\'echet space $\fS$
for constructing the coorbits of $L^p_{\alpha+n+1-sp/2}(G)$.

\begin{theorem}\label{thm: properties of smooth vectors for proj rep}
  Let $s>n$ and let $(\rho_s,\fH_s)$ be the projective representation
  of $G$ which is defined in \ref{eq: proj rep of SU(n,1) on H}. The
  following are true:
  \begin{enumerate}
  \item Every polynomial is a smooth vector for $\rho_s$.
  \item Every smooth vector for $\rho_s$ is bounded.
  \item Assume $v\in \fH_s$, then $v\in\fHS$ \Iff\, $v=\sum_k v_k$ ,
    $v_k\in \mathcal{P}_k $, and for all $N\in \mathbb{N}$ there
    exists a constant $C_N>0$ such that
    $\|v_k\|_{\fH_s}\leq C_N(1+k)^{-N}$.
  \item A vector $\phi\in \DfHS$ \Iff\, $\phi=\sum_k \phi_k$ ,
    $\phi_k\in \mathcal{P}_k $, and there exist $N\in \mathbb{N}$ and
    $C>0$ such that $\|\phi_k\|_{\fH_s}\leq C(1+k)^{N}$. Moreover, the
    dual pairing is given by
$$\langle\phi, v\rangle_s= \sum_k (\phi_k,v_k)_{\fH_s}.$$
\end{enumerate}
\end{theorem}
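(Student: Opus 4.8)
The plan is to exploit the fact that $\rho_s$ is a holomorphic (lowest weight) representation, so that the $K$-isotypic decomposition of $\fH_s$ is precisely the decomposition into spaces of homogeneous polynomials and a single central element of $\fg$ already reads off the degree. Concretely, I would first record the orthogonal decomposition $\fH_s=\widehat{\bigoplus}_k\mathcal{P}_k$: the monomials $z^\alpha$ form an orthogonal basis of $A^2_{s-n-1}(\B)$ and homogeneous polynomials of distinct degrees are orthogonal, since the weight $(1-|z|^2)^{s-n-1}$ is radial. Next I would compute the action of the central circle $k_\theta=e^{i\theta}I_n\in K$: from (\ref{eq: proj rep of SU(n,1) on H}) one gets $\rho_s(k_\theta)p=e^{-i(ns+m)\theta}p$ for $p\in\mathcal{P}_m$, so the skew-adjoint generator $Z\in\fg$ of this circle yields a self-adjoint operator $D:=i\rho_s(Z)$ acting on $\mathcal{P}_m$ as the scalar $m+ns$. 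Thus $D$ detects the degree, and $\|D^Nv\|_{\fH_s}^2=\sum_m(m+ns)^{2N}\|v_m\|_{\fH_s}^2$.

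With this in hand, claim (1) follows because any polynomial lies in finitely many $\mathcal{P}_m$ and is therefore $K$-finite; since $K$-finite vectors of a unitary representation of a reductive Lie group are analytic (Harish-Chandra, Nelson), they are in particular smooth for $\rho_s$. For claim (2) I would write a smooth vector as $v=\sum_m v_m$ with $v_m\in\mathcal{P}_m$ and use that the ratio $\|p\|_\infty/\|p\|_{\fH_s}$ on $\mathcal{P}_m$ grows at most polynomially in $m$ (comparing the Bergman norm of a homogeneous polynomial to its sup-norm on the sphere, together with $\dim\mathcal{P}_m=O(m^{n-1})$). Combined with the rapid decay of $\|v_m\|_{\fH_s}$ coming from claim (3), the series $\sum_m v_m$ then converges uniformly on $\overline{\B}$, so $v$ is bounded (in fact continuous).

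For claim (3), the easy direction is immediate: if $v$ is smooth then $v\in\mathrm{Dom}(D^N)$ for all $N$ because $D\in\rho_s(\mathcal{U}(\fg))$, and $\|D^Nv\|<\infty$ forces $\|v_m\|_{\fH_s}\leq C_N(1+m)^{-N}$. The substantive direction is the converse, which I would obtain from Nelson's characterization $\fHS=\bigcap_N\mathrm{Dom}(\rho_s(\Delta)^N)$ for the elliptic Laplacian $\Delta=1-\sum X_i^2$. Writing $\fg=\mathfrak{k}\oplus\mathfrak{p}$, the compact part preserves each $\mathcal{P}_m$ while $\mathfrak{p}^{\C}=\mathfrak{p}^+\oplus\mathfrak{p}^-$ shifts $K$-types by exactly $\pm1$; the key estimate is that the operator norms of $\rho_s(X)\colon\mathcal{P}_m\to\mathcal{P}_{m\pm1}$ for $X\in\mathfrak{p}^\pm$ grow only polynomially in $m$. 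Granting this, rapid decay of $\|v_m\|$ is preserved under applying any element of $\mathcal{U}(\fg)$, so $v\in\bigcap_N\mathrm{Dom}(\rho_s(\Delta)^N)$ and is hence smooth; the same estimate shows the Fr\'echet topology of $\fHS$ is generated by the Hilbert seminorms $v\mapsto\|D^Nv\|$, equivalently by $p_N(v):=\sup_m(1+m)^N\|v_m\|_{\fH_s}$. I expect this uniform polynomial bound on the non-compact raising and lowering operators, which amounts to reading off the growth of the Shapovalov/Harish-Chandra norms in this lowest weight module, to be the main obstacle; it can be made explicit here because $\mathfrak{p}^\pm$ act on polynomials as first-order operators with polynomial coefficients.

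Finally, claim (4) is pure sequence-space duality once the topology on $\fHS$ is identified with the $p_N$-topology. The map $v\mapsto(v_m)_m$ identifies $\fHS$ with the rapidly decreasing sequences valued in the finite-dimensional spaces $\mathcal{P}_m$, a nuclear Fr\'echet space whose strong dual consists of polynomially bounded sequences. For a given $\phi=\sum_m\phi_m$ with $\|\phi_m\|_{\fH_s}\leq C(1+m)^N$, the pairing $\langle\phi,v\rangle_s=\sum_m(\phi_m,v_m)_{\fH_s}$ converges absolutely (rapid decay of $\|v_m\|$ against polynomial growth of $\|\phi_m\|$) and is bounded by a constant times $p_{N+2}(v)$, so $\phi\in\DfHS$. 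Conversely, any $\phi\in\DfHS$ satisfies $|\langle\phi,v\rangle|\leq Cp_N(v)$ for some $N$; testing on $v\in\mathcal{P}_m$ and using Riesz representation in each finite-dimensional $\mathcal{P}_m$ produces $\phi_m$ with $\|\phi_m\|_{\fH_s}\leq C(1+m)^N$, and continuity gives $\langle\phi,v\rangle=\sum_m(\phi_m,v_m)_{\fH_s}$ since the partial sums $\sum_{m\leq M}v_m$ converge to $v$ in $\fHS$. This establishes the characterization of $\DfHS$ and the stated form of the dual pairing.
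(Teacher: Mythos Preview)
Your proposal is essentially correct and, in fact, supplies far more detail than the paper does. The paper's proof is a one-line reduction: it observes that $\rho_s$ lifts to a genuine unitary representation of the universal covering group of $\SU(n,1)$, so that the smooth vectors for the projective representation and for the lifted representation coincide, and then it simply cites \cite{Chebli2004} and \cite{Christensen2016} where (1)--(4) are established in the representation setting. Your approach instead unpacks the content of those references directly, via the $K$-type decomposition $\fH_s=\widehat{\bigoplus}_m\mathcal{P}_m$, a degree-detecting toral element, Nelson's elliptic regularity criterion for smooth vectors, and standard sequence-space duality. This is a legitimate and more self-contained route; the one point you gloss over is precisely the paper's reduction step: the Harish-Chandra and Nelson results you invoke are theorems about genuine representations of Lie groups, so to apply them you must first pass to the universal cover (or equivalently to the Mackey group $G_\sigma$), which you use implicitly when forming $\rho_s(Z)$ and $\rho_s(\Delta)$. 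A minor computational slip: with $k_\theta=\mathrm{diag}(e^{i\theta}I_n,e^{-in\theta})$ one gets $\rho_s(k_\theta)p=e^{-i(ns+m(n+1))\theta}p$ for $p\in\mathcal{P}_m$, not $e^{-i(ns+m)\theta}p$; this does not affect your argument since the eigenvalue still separates the degrees.
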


\begin{proof}
  The proof is done by noting that $\rho_s$ is a unitary
  representation of the universal covering group of $G$, so the smooth
  vectors are the same for both, where the smooth vectors for
  $\rho_s$, as a representation, satisfy all the above properties as
  proved in \cite{Chebli2004} and \cite[Lemma 2.10]{Christensen2016}.
\end{proof}

\subsection{Bergman Spaces as Coorbits spaces}
In this section we collect several facts from
\cite{Christensen2016} and use them with some modifications to
provide atomic decompositions of Bergman spaces
through projective representations of the group $\SU(n,1)$.

As before, we assume $G= \SU(n,1)$ and $(\fHS,\rho_s)$ is the smooth
projective representation obtained by restricting $(\fH_s,\rho_s)$ to
$\fHS$. In this section we show that Bergman spaces are twisted
convolutive coorbits of weighted $L^p$ spaces, which allows us to
discretize Bergman spaces using the full group $\SU(n,1)$. For this
goal we need the following results which was already proved for the linear
representation in \cite{Christensen2016}. The same proof will work (with minor
differences) for the projective representation case. 
For completeness
we will provide a full proof for each of these results. 
The first lemma corresponds to Lemma 3.15 in \cite{Christensen2016}.
\begin{lemma}\label{Lemma: tW w(v)< Ctw u(u) }
  Assume $u$ and $v$ are smooth vectors for $\rho_s$. There is a
  constant $C$ depending on $u$ and $v$ such that
  $$|W^{\rho_s}_u(v)(x)|\leq C(1-|x\cdot o|^2)^{s/2}\left(1-\log (1-|x\cdot o|^2)\right).$$
Moreover, the constant $C$ can be chosen uniform in $|\alpha|$ for $v=z^\alpha$.
\end{lemma}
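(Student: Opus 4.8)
The plan is to estimate the wavelet transform directly as a weighted integral over the ball. By definition
$$W^{\rho_s}_u(v)(x) = (v,\rho_s(x)u)_{\fH_s} = \int_{\B} v(z)\,\overline{(-(z,b)+\overline{d})^{-s}}\;\overline{u(x^{-1}\cdot z)}\,\Id v_{s-n-1}(z),$$
where $x$ is written in the block form used throughout. Since $u$ and $v$ are smooth vectors, they are bounded by part (2) of Theorem \ref{thm: properties of smooth vectors for proj rep}, so $|u(x^{-1}\cdot z)|\le\|u\|_\infty$ and $|v(z)|\le\|v\|_\infty$ uniformly. Taking absolute values leaves a single scalar integral,
$$|W^{\rho_s}_u(v)(x)|\le \|u\|_\infty\|v\|_\infty \int_{\B} |-(z,b)+\overline{d}|^{-s}\,\Id v_{s-n-1}(z),$$
and the whole problem reduces to bounding this integral in terms of $1-|x\cdot o|^2$.

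The key algebraic step is to rewrite the kernel factor in terms of the orbit point $w:=x\cdot o=b/d$. Writing $b=wd$ gives $(z,b)=\overline{d}\,(z,w)$, hence $-(z,b)+\overline{d}=\overline{d}\,(1-(z,w))$ and $|-(z,b)+\overline{d}|^{-s}=|d|^{-s}\,|1-(z,w)|^{-s}$. Combining this with the identity $1-|x\cdot o|^2=1-|b|^2/|d|^2=|d|^{-2}$, which follows from \eqref{equ: d2-b2=1}, yields $|d|^{-s}=(1-|x\cdot o|^2)^{s/2}$. This pulls the desired power out front:
$$\int_{\B} |-(z,b)+\overline{d}|^{-s}\,\Id v_{s-n-1}(z) = (1-|x\cdot o|^2)^{s/2}\int_{\B}\frac{\Id v_{s-n-1}(z)}{|1-(z,w)|^{s}}.$$

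It then remains to estimate $J(w):=\int_{\B}|1-(z,w)|^{-s}\,\Id v_{s-n-1}(z)$. Recalling $\Id v_{s-n-1}(z)=C_{s-n-1}(1-|z|^2)^{s-n-1}\Id v(z)$, this is a classical Forelli--Rudin type integral: with the exponent of $(1-|z|^2)$ equal to $c=s-n-1$ and the exponent of $|1-(z,w)|$ equal to $n+1+c+t$, the choice $t=0$ makes $n+1+c+t=s$ (and the hypothesis $s>n$ guarantees $c>-1$, so the estimate applies). In this borderline case $t=0$ the standard estimate, see \cite[Thm.~1.12]{Zhu2005}, gives
$$J(w)\le C\big(1-\log(1-|w|^2)\big)\qquad(w\in\B),$$
where the additive constant $1$ absorbs the boundedness of $J$ away from the boundary (note $-\log(1-|w|^2)\ge 0$). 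Substituting $w=x\cdot o$ and assembling the three displays yields the claimed bound.

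The routine ingredients are the boundedness of smooth vectors (already available) and the scalar integral estimate (a cited classical fact); the only genuinely substantive step is the identity $-(z,b)+\overline{d}=\overline{d}(1-(z,w))$ together with $1-|x\cdot o|^2=|d|^{-2}$, which is precisely what converts the representation-theoretic quantity into the Forelli--Rudin integral. I expect the main care to be needed in phrasing the borderline ($t=0$) estimate in the global additive form above, so that the bound holds uniformly on all of $\B$ rather than only asymptotically as $|x\cdot o|\to 1$.
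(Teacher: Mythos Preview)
Your proof is correct and follows essentially the same route as the paper's own argument: bound $u$ and $v$ using boundedness of smooth vectors, factor $-(z,b)+\overline{d}=\overline{d}(1-(z,x\cdot o))$ to pull out $(1-|x\cdot o|^2)^{s/2}$, and then invoke the borderline Forelli--Rudin estimate \cite[Theorem~1.12]{Zhu2005} for the remaining integral. Your discussion of the $t=0$ case and the global additive form of the logarithmic bound is, if anything, slightly more explicit than the paper's.
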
 

The next result is Proposition 3.16(i) in \cite{Christensen2016} extended to
irrational $s$.

\begin{proposition}\label{twisted wavelet of smooth vector in Lp(G)}
  Let $\alpha>-1$, $1\leq p < \infty$, and $s>n$ be chosen.
  Assume
  that $u$ and $v$ are smooth vectors for $\rho_s$. Then
  $W^{\rho_s}_u(v)\in L^p_t(G)$ for $t+ps/2>n$.
\end{proposition}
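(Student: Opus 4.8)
The plan is to reduce the claimed $L^p_t(G)$ membership to a convergent integral over $G$ and then convert that integral into an integral over $\B^n$ using the identification \eqref{eq: integral over G in term of Bn}, where the elementary size estimate from Lemma~\ref{Lemma: tW w(v)< Ctw u(u) } can be applied directly. First I would write out the quantity to be controlled, namely
\[
\|W^{\rho_s}_u(v)\|_{L^p_t(G)}^p = c_t\int_G |W^{\rho_s}_u(v)(x)|^p (1-|x\cdot o|^2)^t\,dx,
\]
and note that the integrand depends on $x$ only through the orbit point $x\cdot o\in\B^n$. Since the kernel $W^{\rho_s}_u(v)(x)$ is a $K$-right-invariant expression (it only sees $|x\cdot o|^2$ in the bound), I can apply \eqref{eq: integral over G in term of Bn} to rewrite the $G$-integral as an integral over $\B^n$ against the invariant measure $(1-|z|^2)^{-n-1}\,dz$.

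Next I would insert the pointwise bound from Lemma~\ref{Lemma: tW w(v)< Ctw u(u) }, namely $|W^{\rho_s}_u(v)(x)|\le C(1-|x\cdot o|^2)^{s/2}(1-\log(1-|x\cdot o|^2))$. After raising to the $p$-th power and collecting the powers of $(1-|z|^2)$, the integral over $\B^n$ becomes comparable to
\[
\int_{\B^n} (1-|z|^2)^{ps/2+t-n-1}\left(1-\log(1-|z|^2)\right)^p\,dz.
\]
The convergence of this integral is a standard one-variable radial computation: such an integral over the ball is finite precisely when the exponent on $(1-|z|^2)$ exceeds $-1$, and the logarithmic factor does not affect this threshold since any power of $1-\log(1-|z|^2)$ is absorbed by a slightly smaller power of $(1-|z|^2)$. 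Thus finiteness holds exactly when $ps/2+t-n-1>-1$, i.e. when $t+ps/2>n$, which is the hypothesis. I would cite a standard reference such as \cite[Theorem 1.12]{Zhu2005} for the value of the radial integral.

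The main obstacle, and the step requiring the most care, is confirming that the logarithmic factor is genuinely harmless at the convergence threshold and does not shift it. The cleanest way to handle this is to observe that for any $\delta>0$ there is a constant $C_\delta$ with $(1-\log r)^p\le C_\delta r^{-\delta}$ for $r\in(0,1]$, so the logarithmic factor can be traded for an arbitrarily small negative power of $(1-|z|^2)$; choosing $\delta$ small enough that $ps/2+t-n-1-\delta>-1$ still holds (possible precisely because the strict inequality $t+ps/2>n$ gives room to spare) reduces everything to the pure power integral, whose finiteness is classical. Assembling these estimates yields the bound $\|W^{\rho_s}_u(v)\|_{L^p_t(G)}<\infty$, which is the assertion.
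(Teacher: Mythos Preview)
Your proposal is correct and follows essentially the same route as the paper: apply the pointwise bound from Lemma~\ref{Lemma: tW w(v)< Ctw u(u) }, absorb the logarithmic factor into an arbitrarily small negative power of $(1-|x\cdot o|^2)$, and then use the identification \eqref{eq: integral over G in term of Bn} to reduce to a radial integral over $\B^n$ whose finiteness is exactly the condition $t+ps/2>n$. The only cosmetic difference is that the paper absorbs the log before raising to the $p$-th power (bounding $|W^{\rho_s}_u(v)(x)|\le C(1-|x\cdot o|^2)^{(s-\epsilon)/2}$ directly), whereas you do it afterwards; either order works.
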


We will now verify that any smooth (appropriately normalized) vector $u$
is analyzing. First we will verify the reproducing formula for vectors
in the Hilbert space.
Assume that $v,w\in \fH$.
Since
$\rho_s$ is square integrable and $G$ is unimodular, every 
vector is $\rho_s$-admissible, $i.e.$, $u$ is in the
  domain of the operator $A_\rho$, which is given in Theorem
  \ref{thm:duflomoore}. Note, that $A_\rho$ is a multiple of the identity since $G$ is unimodular.
  By the orthogonality
  relation in Theorem \ref{thm:duflomoore}, we have
$$\int_G ( v,\rho(x)u)_{\fH_s}\,(\rho(x)u,w)_{\fH_s}\, dx=c_\rho^2\|u\|_{\fH_s}^2\,( v,w)_{\fH_s}.$$ 
Letting $w=\rho_s(x)u$ it follows that
 \begin{align*}
  W^{\rho_s}_u(v)\#W^{\rho_s}_u(u)(x)=&\int_G ( v,\rho(y)u)_{\fH_s} \, ( u,\rho(y^{-1}x) u)_{\fH_s}\,\overline{\sigma(y,y^{-1}x)}\, dy\\
   =& \int_G ( v,\rho(y)u)_{\fH_s} \, ( u,\rho(y^{-1})\rho(x) u)_{\fH_s}\,\overline{\sigma(y^{-1},x)}\,\overline{\sigma(y,y^{-1}x)}\, dy \\
  =&\int_G ( v,\rho(y)u)_{\fH_s} \, ( u,\rho(y)^{-1}\rho(x) u)_{\fH_s}\,\sigma(y,y^{-1}) \,\overline{\sigma(y,y^{-1})}\, dy\\
  = &\int_G ( v,\rho(y)u)_{\fH_s} \, ( \rho(y)u,\rho(x) u)_{\fH_s}\, dy \\
  =&c_\rho^2(v,\rho(x)u)_{\fH_s}\, ( u,u)_{\fH_s}  \\
  =& CW^{\rho_s}_u(v)(x) 
 \end{align*}
 for all $v\in \fH$. 
 
   We will now define the twisted convolution in a weak sense
   as described in Remark~\ref{rem:weakconv}.
   Let $0\leq \psi_n\leq 1$ be an increasing sequence of
   compactly supported smooth functions on $G$ that are
   identically $1$ on growing compact sets $C_n$ for which
   $\cup_n C_n = G$. Also assume that
   derivatives of order $N$ of 
   $x\mapsto \ell_x^\sigma \psi_n$ at the origin are uniformly bounded
   in $n$, i.e. for $Y_1,\cdots,Y_N\in \mathfrak{g}$ we have
   $$
   \sup_n \| \ell^\sigma(Y_1)\cdots \ell^\sigma(Y_N) \psi_n\|_\infty < \infty,
   $$
   where $\ell^\sigma(X)f(x) = \frac{d}{dt}\Big|_{t=0} \ell^\sigma_{e^{tX}} f(x)$ are strong derivatives.
   Now define the twisted convolution of two functions $f,g$ by
   \begin{equation}
     \label{eq:weakdefconvsmooth}
     f \# g(x)= \lim_{n\to\infty}
     \int_G \psi_n(y)f(y) \ell_y^\sigma g(x) \, dy 
   \end{equation}
   whenever the limit exists.

 \begin{lemma}
    \label{lemma:reprofordist}
    Let $u$ be a non-zero smooth vector for $\rho_s$ for which
    $\| u\|_{\fH_s} = c_\rho^{-1}$. Then for $\phi \in \fH^{-\infty}_s$ the
    twisted convolution
    $W^{\rho_s}_u(\phi)\# W^{\rho_s}_u(u)$ exists and equals
    $
    W^{\rho_s}_u(\phi).
    $
  \end{lemma}
 
  \begin{proof}
    We get that
    \begin{align*}
    W^{\rho_s}_u(\phi)\# W^{\rho_s}_u(u)(x)
    &= \lim_{n\to\infty} \int
    \psi_n(y) W_u^{\rho_s}(\phi)(y) \overline{\sigma(y,y^{-1}x)}W_u^{\rho_s}(u)(y^{-1}x) \,dy \\
    &= \lim_{n\to\infty} \int
      \psi_n(y) \langle \phi,\rho_s(y)u\rangle \overline{\sigma(y,y^{-1}x) } \langle u, \rho_s(y^{-1}x) u\rangle \,dy \\
      &= \lim_{n\to\infty} \int
      \psi_n(y) \langle \phi,\rho_s(y)u\rangle \langle \rho_s(y)u, \rho_s(x) u\rangle \,dy.
    \end{align*}
    Define the smooth compactly supported function
    $$
    \Psi_n(y) = \psi_n(y)\langle \rho_s(x)u,\rho_s(y)u \rangle,
    $$
    then
    $$
    W^{\rho_s}_u(\phi)\# W^{\rho_s}_u(u)(x)
    = \lim_{n\to\infty} \int \langle \phi,\Psi_n(y)\rho_s(y)u\rangle\, dy.
    $$
    Since $\Psi_n$ is smooth and compactly supported the vector
    $$
    \rho_s(\Psi_n)u := \int \Psi_n(y)\rho_s(y)u, dy
    $$
    is a smooth vector, and
    $$
    W^{\rho_s}_u(\phi)\# W^{\rho_s}_u(u)(x)
    = \lim_{n\to\infty} \langle \phi, \rho_s(\Psi_n)u \rangle.
    $$
    To show that this limit exists and equals
    $\langle \phi,\rho_s(x)u \rangle$,
    we need to verify that
    $\rho_s(\Psi_n)u$ converges to $\rho_s(x)u$ in the
    topology of the smooth vectors $\mathcal{H}_{s}^\infty$.
    Denote $\rho(x)u$ by $v$, then
    we have to show that for
    $$
    \Psi_n(y) = \psi_n(y)\langle v,\rho_s(y)u \rangle,
    $$
    the vectors $\rho_s(\Psi_n)u$ converge to $v$
    in $\mathcal{H}_{s}^\infty$.
    Let us first verify convergence in $\mathcal{H}_{s}$.
    $$
    \| \rho_s(\Psi_n)u - v  \|^2_{\fH_s}
    =
    \| \rho_s(\Psi_n)u\|^2_{\fH_s} + \|  v\|^2_{\fH_s}
    - (\langle \rho_s(\Psi_n)u,  v   \rangle
    - \langle v   , \rho_s(\Psi_n)u \rangle).
    $$
    By the Lebesgue dominated convergence theorem and square integrability
    of the representation, we have that
    $$
    \lim_{n\to\infty}\langle \rho_s(\Psi_n)u,  v   \rangle
    = \lim_{n\to\infty} \int \psi_n(y) |\langle \rho_s(y)u,v \rangle|^2 \, dy
    = \| v\|^2_{\fH_s}.
    $$
    Therefore we just need to check that
    $$
    \lim_{n\to\infty} \| \rho_s(\Psi_n)u\|^2 =  \|  v\|^2_{\fH_s}.
    $$
    Notice that by Fubini we have
    $$
    \| \rho_s(\Psi_n)u\|^2_{\fH_s}
    = \int \int \psi_n(x)\psi_n(y)
    \langle v,\rho_s(y)u\rangle
    \langle \rho_s(y)u ,\rho_s(x)u\rangle
    \langle \rho_s(x)u,v \rangle \, dx\, dy,
    $$
    and we will be able to apply the Lebesgue dominated convergence theorem if
    we can show that the function
    $|\langle v,\rho_s(y)u\rangle
    \langle \rho_s(y)u ,\rho_s(x)u\rangle
    \langle \rho_s(x)u,v \rangle|$ is integrable.
    This is the same as showing that the integral
    $$
    \int \int |W^{\rho_s}_u(v)(y)||W^{\rho_s}_u(u)(y^{-1}x)||W^{\rho_s}_u(v)(x)| \,dx\,dy
    $$
    is finite.
    From Proposition~\ref{twisted wavelet of smooth vector in Lp(G)}
    it is known that $W^{\rho_s}_u(v)\in L^2(G)$ and 
    $W^{\rho_s}_u(u)\in L^p$ for some $1<p<2$, so the Kunze-Stein phenomenon
    \cite{Cowling1978} tells us that the integral is finite.

    We can now repeat the argument with derivatives of the vector
    $\rho_s(\Psi_n)u$ to show it converges to derivatives of
    $v$.
    If $X\in\mathfrak{g}$ then
    $$
    \rho_s(X)\rho_s(\Psi_n)u = \rho_s(X\Psi_n) \rho_s.
    $$
    Since $\Psi_n = \psi_nW_u^{\rho_s}(v)$ we get that
    $$
    \rho_s(X)\rho_s(\Psi_n)u = (\ell^\sigma(X)\psi_n(x))W^{\rho_s}_u(v)
    + \psi_n(x)W_u^{\rho_s}(\rho_s(X)v).
    $$
    As before, it follows from the Lebesgue dominated
    convergence theorem and the assumption that $\ell^\sigma(X)\psi_n$ is
    uniformly bounded in $n$, that
    $\rho_s(X)\rho_s(\Psi_n)u$ converges to $\rho_s(X)v)$
    in $\mathcal{H}_{\rho_s}$. This argument can be repeated to show that
    $\rho_s(\Psi_n)u$ converges to $v$ in $\mathcal{H}_{\rho_s}^\infty$.

 The argument also shows that the twisted convolution
    $W^{\rho_s}_u(\phi)\# W^{\rho_s}_u(u)$ exists and does not depend on
    the particular choice of the sequence of functions $\psi_n$.
  \end{proof}

Now we are ready to show that the twisted coorbits of the spaces
$L^p_{\alpha+n+1-s p/2}(G)$ generated by any nonzero smooth vector
$u\in \fHS$ are well defined nonzero spaces under the assumptions in
the following theorem. 
This result uses techniques found in the proof of
Proposition 3.16 in \cite{Christensen2016}, but needs to be verified in our
situation due to the occurence of the cocycle.

\begin{theorem}\label{twisted coorbit of Bergman space is well defined}
  Let $1\leq p < \infty$, and $s>n$. Assume that
  $-1<\alpha<p(s-n)-1$. For a nonzero smooth vector $u\in\fHS$, the
  coorbit space $\coLp$ is a nonzero well defined Banach space.
\end{theorem}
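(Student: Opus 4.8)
The plan is to recognize that ``well defined Banach space'' means precisely that the hypotheses of Theorem~\ref{coorbitthm} hold for $B = L^p_{\alpha+n+1-sp/2}(G)$, $\rho=\rho_s$ and the given $u$; once this is established, parts (2) and (4) of that theorem deliver the Banach space structure, and it only remains to exhibit a nonzero element. First I would record that $u$ may be taken $\rho_s$-analyzing: the representation $\rho_s$ belongs to the holomorphic discrete series and is therefore irreducible and square-integrable, and $G=\SU(n,1)$ is unimodular, so by Theorem~\ref{thm:duflomoore}(2) the operator $A_{\rho_s}$ is a multiple of the identity, every nonzero vector is admissible, and the resulting orthogonality relation yields the reproducing identity $W^{\rho_s}_u(\lambda)\#W^{\rho_s}_u(u)=W^{\rho_s}_u(\lambda)$ after normalizing $u$; rescaling $u$ does not change the coorbit space.

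The core of the argument is verifying the continuity condition of Assumption~\ref{assumptionrhocoorbit}. Since $B$ is a weighted $L^p$-space with weight $w(x)=(1-|x\cdot o|^2)^{\alpha+n+1-sp/2}$, Remark~\ref{Rem:assumptionrhocoorbit} reduces this to showing that $\fHS\ni v\mapsto W^{\rho_s}_v(u)^\vee$ is continuous into $L^q_{w^{-q/p}}(G)$, where $1/p+1/q=1$. Writing $t'=-(q/p)(\alpha+n+1-sp/2)$ for the exponent of the target weight, Proposition~\ref{twisted wavelet of smooth vector in Lp(G)} guarantees membership in $L^q_{t'}(G)$ as soon as $t'+qs/2>n$; a direct computation, using $(q/p)(sp/2)=qs/2$ and $p/q=p-1$, shows that $t'+qs/2>n$ is equivalent to $p(s-n)>\alpha+1$, i.e. to the hypothesis $\alpha<p(s-n)-1$. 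Here one also uses that $G$ is unimodular and $|x\cdot o|=|x^{-1}\cdot o|$, so the weight is invariant under $x\mapsto x^{-1}$ and $W^{\rho_s}_v(u)^\vee$ inherits the integrability of $W^{\rho_s}_v(u)$, while the cocycle factor $\overline{\sigma(y,y^{-1})}$ has modulus one and does not affect the estimate. The continuity condition then follows by H\"older's inequality, since $|f\#W^{\rho_s}_v(u)(1)|\le \|f\|_{L^p_w}\|W^{\rho_s}_v(u)^\vee\|_{L^q_{w^{-q/p}}}$.

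The step I expect to be most delicate is promoting the pointwise membership supplied by Proposition~\ref{twisted wavelet of smooth vector in Lp(G)} to genuine continuity of the linear map $v\mapsto W^{\rho_s}_v(u)^\vee$ on the Fr\'echet space $\fHS$. For this I would use Lemma~\ref{Lemma: tW w(v)< Ctw u(u) }, whose constant is the symmetric quantity $\|u\|_\infty\|v\|_\infty$; since smooth vectors are bounded (Theorem~\ref{thm: properties of smooth vectors for proj rep}(2)), the sup-norm is a continuous seminorm on $\fHS$, so $\|W^{\rho_s}_v(u)^\vee\|_{L^q_{t'}}\le \|u\|_\infty\,\|v\|_\infty\,I^{1/q}$ with $I$ the finite integral from the proposition, which gives continuity of a linear map from a seminorm bound. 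Finally, to see that $\coLp$ is nonzero, note that the weaker condition $\alpha>-1$ makes $t+ps/2>n$ hold for $t=\alpha+n+1-sp/2$, so Proposition~\ref{twisted wavelet of smooth vector in Lp(G)} places $W^{\rho_s}_u(v)$ in $B$ for every smooth $v$; taking $v=u$ gives $u\in\coLp$, and $W^{\rho_s}_u(u)(e)=(u,u)_{\fH_s}=\|u\|^2\ne 0$ shows $u\neq 0$ in $\coLp$.
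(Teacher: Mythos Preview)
Your overall strategy matches the paper's: verify Assumption~\ref{assumptionrhocoorbit} and then invoke Theorem~\ref{coorbitthm}, using Remark~\ref{Rem:assumptionrhocoorbit} together with Proposition~\ref{twisted wavelet of smooth vector in Lp(G)} for the continuity condition, and the same proposition again for nonemptiness. Your treatment of the duality computation and of the passage from membership to continuity of $v\mapsto W^{\rho_s}_v(u)^\vee$ is in fact more careful than the paper's.

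There is, however, a genuine gap in your verification that $u$ is $\rho_s$-analyzing. The definition requires the reproducing identity
\[
W^{\rho_s}_u(\lambda)\#W^{\rho_s}_u(u)=W^{\rho_s}_u(\lambda)
\]
to hold for \emph{every} $\lambda\in\fS^*=\DfHS$, not merely for $\lambda\in\fH_s$. The Duflo--Moore orthogonality relation (Theorem~\ref{thm:duflomoore}) only yields the identity for Hilbert space vectors; since $\DfHS$ is strictly larger than $\fH_s$, a separate argument is needed to extend it. The paper devotes roughly half of its proof to this step: it uses the series description of distribution vectors from Theorem~\ref{thm: properties of smooth vectors for proj rep}(4), writes $\phi=\sum_k\phi_k$ with $\phi_k\in\mathcal P_k$, applies Duflo--Moore termwise, and justifies the interchange of sum and integral via Tonelli to show that $\phi\mapsto\int_G\langle\phi,\rho(x)u\rangle(\rho(x)u,u)\,dx$ is weak$^*$-continuous on $\DfHS$. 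Without this extension you cannot conclude that $u$ is analyzing in the required sense, so Theorem~\ref{coorbitthm} does not yet apply.
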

\begin{proof}
  Let us show that any nonzero smooth vector $u\in \fHS$ satisfies
  Assumption \ref{assumptionrhocoorbit}. First, $u$ is $\rho$-cyclic
  because $\fHS$ is an irreducible projective representation. 
  By the previous lemma it is also analyzing. All that is left to show is
  that the mapping
$$(f,v)\mapsto \int_G
f(x)W^{\rho_s}_u(x^{-1})\overline{\sigma(x,x^{-1})}\, dx$$
is continuous on $L^P_{\alpha+n+1-sp/2}(G)$. By Remark
\ref{Rem:assumptionrhocoorbit}, it is enough to show
that
$$W^{\rho_s}_u(v)\in
(L^p_{\alpha+n+1-sp/2}(G))^*=L^q_{sq/2-(\alpha+n+1)q/p}(G),$$
where $1/p+1/q=1$. This is done by Proposition \ref{twisted wavelet of
  smooth vector in Lp(G)}, because $$sq/2-(\alpha+n+1)q/p+sp/2>n,$$
whenever $-1<\alpha<p(s-n)-1$. Therefore, the space
$\Co_{\rho_s}^u L^p_{\alpha+n+1-sp/2}(G))$ is well defined. Finally,
note that $W^{\rho_s}_u(u)\in L^p_{\alpha+n+1-sp/2}(G)$ again by
Proposition \ref{twisted wavelet of smooth vector in Lp(G)}, hence it
is nonzero Banach space.
\end{proof}

Our goal now is to describe Bergman spaces as twisted coorbits
generated by any nonzero smooth vector $u\in \fHS$. First we describe
Bergman spaces as twisted coorbits by the special $\rho$-analyzing
vector $u=1_{\B}$, then we show that this coorbit is independent of
the choice of $u$. 

\begin{theorem}
  Let $\alpha>-1$, $1\leq p < \infty$, and $u=1_{\B}$. The Bergman
  space $A^p_\alpha(\B)$ is the twisted coorbit space of
  $L^p_{\alpha+n+1-s p/2}(G)$ that corresponds to the projective
  representation $(\fHS,\rho_s)$. $i.e.$,
  $A^p_\alpha(\B)= \Co_{\rho_s}^u L^p_{\alpha+n+1-s p/2}(G)$ for
  $\alpha<p(s-n)-1$.
\end{theorem}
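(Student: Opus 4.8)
The plan is to realize $A^p_\alpha(\B)$ as a concrete coorbit by computing the wavelet transform of the distinguished vector $u=1_\B$ and recognizing the result. The key observation is that $u=1_\B$ is the constant function, which represents the reproducing kernel at the origin in a suitable sense, so that the wavelet transform $W^{\rho_s}_u(\lambda)(x)=(\lambda,\rho_s(x)u)_{\fH_s}$ should recover the holomorphic function attached to $\lambda$ evaluated along the orbit of the origin. First I would verify that $u=1_\B=K_{s-n-1}(\cdot,0)^{\text{(suitably normalized)}}$ is indeed a smooth vector (immediate, being a polynomial of degree $0$, by part (1) of Theorem \ref{thm: properties of smooth vectors for proj rep}) and that it is a $\rho$-analyzing vector satisfying Assumption \ref{assumptionrhocoorbit}; but this is already guaranteed by Theorem \ref{twisted coorbit of Bergman space is well defined} since $1_\B$ is a nonzero smooth vector and $\alpha<p(s-n)-1$. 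Thus $\Co_{\rho_s}^u L^p_{\alpha+n+1-sp/2}(G)$ is a well-defined nonzero Banach space, and it remains only to identify it isometrically with $A^p_\alpha(\B)$.

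The heart of the argument is an explicit computation of $W^{\rho_s}_u(v)(x)$ for $u=1_\B$. Using the formula $\rho_s(x)f(z)=(-(z,b)+\overline d)^{-s}f(x^{-1}\cdot z)$ together with the reproducing property of $K_{s-n-1}$, I expect to find
\begin{equation}\label{eq:wavelet-identity}
W^{\rho_s}_{1_\B}(v)(x)=(v,\rho_s(x)1_\B)_{\fH_s}=\overline{(-(0,b)+\overline d)^{-s}}\,v(x\cdot 0)=d^{-s}\,v(x\cdot 0),
\end{equation}
so that $W^{\rho_s}_{1_\B}(v)(x)=(1-|x\cdot o|^2)^{s/2}\,v(x\cdot 0)$ after using $|d|^{-s}=(1-|x\cdot o|^2)^{s/2}$ (recall $|d|^2-|b|^2=1$ from \eqref{equ: d2-b2=1}). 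This means the wavelet transform sends a holomorphic function $v$ to a function on $G$ whose $L^p_{\alpha+n+1-sp/2}(G)$-norm, via the integration formula \eqref{eq: integral over G in term of Bn} converting integrals over $G$ into integrals over $\B$, matches precisely the $L^p_\alpha(\B)$-norm of $v$ up to a constant. The weight bookkeeping is the key calculation: the factor $(1-|x\cdot o|^2)^{sp/2}$ from $|W^{\rho_s}_{1_\B}(v)|^p$ combines with the weight $(1-|x\cdot o|^2)^{\alpha+n+1-sp/2}$ to leave $(1-|x\cdot o|^2)^{\alpha+n+1}$, and the correspondence $\int_G\widetilde F(x)\,dx=\int_\B F(z)(1-|z|^2)^{-n-1}\,dz$ absorbs the extra $(1-|z|^2)^{-n-1}$, yielding exactly $\int_\B|v(z)|^p(1-|z|^2)^\alpha\,dz$ as in \eqref{eq: L(B) and L(G) isometric}.

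Once identity \eqref{eq:wavelet-identity} is established, I would argue as follows. The map $v\mapsto W^{\rho_s}_{1_\B}(v)$ is, on smooth vectors, an isometry (up to a fixed constant) from $A^p_\alpha(\B)$ into $L^p_{\alpha+n+1-sp/2}(G)$. By Theorem \ref{coorbitthm}, part (4), the wavelet transform $W^{\rho_s}_u$ is an isometric isomorphism from $\coLp$ onto the reproducing kernel space $B_u^\#$. The remaining task is to check that the range of $W^{\rho_s}_{1_\B}$ consists exactly of functions of the form $x\mapsto (1-|x\cdot o|^2)^{s/2}\,g(x\cdot 0)$ with $g$ holomorphic, i.e. that the coorbit elements, which are distributions $\phi\in\DfHS$ with $W^{\rho_s}_{1_\B}(\phi)\in B$, correspond precisely to holomorphic functions in $A^p_\alpha(\B)$. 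Here one uses that $W^{\rho_s}_{1_\B}(\phi)(x)=(1-|x\cdot o|^2)^{s/2}\phi(x\cdot 0)$ extends to distributions $\phi$ via the pairing in Theorem \ref{thm: properties of smooth vectors for proj rep}(4), and that membership of this function in $L^p_{\alpha+n+1-sp/2}(G)$ forces $\phi(x\cdot 0)$, as a function on $\B$, to lie in $A^p_\alpha(\B)$; conversely every element of $A^p_\alpha(\B)$ arises this way.

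The main obstacle I anticipate is the density and surjectivity part: verifying that \emph{every} element of $A^p_\alpha(\B)$, not merely the smooth vectors, is captured by the coorbit, and that the coorbit contains no spurious distributions. This requires knowing that the wavelet transform intertwines the representation correctly and that $B_u^\#$ is exactly the image of holomorphic $L^p_\alpha$ functions. I would handle this by combining the isometry on the dense subspace of polynomials (or smooth vectors) with the fact that $\coLp$ is complete (Theorem \ref{coorbitthm}(2)) and that $A^p_\alpha(\B)$ is a closed subspace of $L^p_\alpha(\B)$; continuity of point evaluations on Bergman spaces then guarantees that the limit in $\coLp$ of a Cauchy sequence of smooth vectors is again a holomorphic function, pinning down the image as all of $A^p_\alpha(\B)$.
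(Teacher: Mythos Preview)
Your proposal is essentially the paper's approach: compute $W^{\rho_s}_{1_\B}(f)(x)$ explicitly, recognize that $|W^{\rho_s}_{1_\B}(f)(x)|=(1-|x\cdot o|^2)^{s/2}|f(x\cdot o)|$, and then do the weight bookkeeping via \eqref{eq: L(B) and L(G) isometric}. Two small remarks. First, your displayed identity \eqref{eq:wavelet-identity} gives $W^{\rho_s}_{1_\B}(v)(x)=d^{-s}v(x\cdot 0)$, which is correct, but the next line ``$=(1-|x\cdot o|^2)^{s/2}v(x\cdot 0)$'' is only true in absolute value since $d$ is complex; this is harmless for the $L^p$-norm computation but should be stated with $|\cdot|$. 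Second, for the inclusion $A^p_\alpha(\B)\subset\DfHS$ the paper simply invokes \cite[Theorem 3.6]{Christensen2016}, whereas you propose a density/completeness argument; your route works but you should also note (as the paper implicitly does via the homogeneous expansion $\phi=\sum_k\phi_k$) that every $\phi\in\DfHS$ already defines a holomorphic function on $\B$ by Theorem \ref{thm: properties of smooth vectors for proj rep}(4), which is what makes the reverse inclusion $\coLp\subset A^p_\alpha(\B)$ go through.
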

\begin{proof}
  As in \cite[Theorem 3.6 ]{Christensen2016}, the space
  $A^p_\alpha (\B)\subset \DfHS$ for all $p\geq 1$, which is still
  valid in the case of smooth vectors for $\rho_s$. The reason 
  is that the smooth vectors for the projective representation
  $\rho_s$ are the same as the smooth vectors of the 
  representation of the
  universal covering of $G$. 
  First we recall that every element in $ \DfHS$ can be realized 
  as a holomorphic
  function   $f=\sum_k f_k$ in $\DfHS$ (for details see \cite{Christensen2016}). We then have to show that the function $f$ is in  $L^p_\alpha(\B)$ \Iff
  $W^{\rho_s}_u(f)\in L^p_{\alpha+n+1-sp/2}(G)$.
  
  To this end, assume
  $x=\left(\begin{array}{cc}             A&b\\             c^t&d           \end{array}\right)$. 
  By definition of the dual pairing and the fact that the polynomials $f_k$ 
  belong to all Bergman spaces, we have  
  \begin{align*}
    |W^{\rho_s}_u (f)(x)|=&\left|\sum_k  W^{\rho_s}_u(f_k)(x)\right|\\
    =&\left|\sum_k (\overline{d})^{-s}f_k(bd^{-1})\right|\\
    =&|d|^{-s}\left|\sum_kf_k(x\cdot o)\right|\\
    =&|d|^{-s}|f(x\cdot o)|.
  \end{align*}
  As we have seen before, $|d|^{-s}=(1-|x \cdot
  o|^2)^{s/2}$.
  It follows that
  $$|f(x\cdot o)|= (1-|x \cdot o|^2)^{-s/2}|W^{\rho_s}_u
  (f)(x)|$$
  by the isometry in (\ref{eq: L(B) and L(G) isometric}). We
  conclude that $f\in L^p_\alpha(\B)$ \Iff\,
  $W^{\rho_s}_u (f)\in L^p_{\alpha+n+1-sp/2}(G)$.
\end{proof}

To prove our main result in this section, which says that
Bergman spaces are twisted coorbits for weighted $L^p$ spaces
generated by any smooth vector, we need the following
theorem. It will be used in the subsequent section to generate
a Banach frame and atomic decomposition for Bergman spaces.
The following Lemma makes the transition from the weakly defined
twisted convolution to a proper integral which is needed for providing
frames and atoms.
\begin{lemma}
  Let $1\leq p < \infty$, $-1<\alpha<p(s-n)-1$, and let $v$ and
  $u$ be smooth vectors.
  When $f$ is in   $L^p_{\alpha+n+1-sp/2}(G)$
  the twisted convolution
  $f\#W^{\rho_s}_u(v)$ is a proper integral, i.e.
  $$
  f\#W^{\rho_s}_u(v)(x) = \int_G f(y) \ell_y^\sigma W^{\rho_s}_u(v) (x) \, dy.
  $$
\end{lemma}
\begin{proof}
  Just an application of Lebesgue's dominated convergence theorem
  and the fact that $W^{\rho_s}_u(v)$ is in the dual of
  $L^p_{\alpha+n+1-sp/2}(G)$ for the specified parameters.
\end{proof}

\begin{theorem}\label{Thm:convo is cont. on B, proj case}
  Let $1\leq p < \infty$, $-1<\alpha<p(s-n)-1$, and let $v$ and
  $u$ be smooth vectors. The convolution operator
  $f\mapsto f*|W^{\rho_s}_u(v)|$ is continuous on
  $L^p_{\alpha+n+1-sp/2}(G)$. In particular,
  $f\mapsto f\#W^{\rho_s}_u(v)$ is continuous on
  $L^p_{\alpha+n+1-sp/2}(G)$.
\end{theorem}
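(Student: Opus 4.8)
The plan is to reduce the statement to a Young-type convolution inequality on the weighted space $L^p_{\alpha+n+1-sp/2}(G)$. Writing $t = \alpha+n+1-sp/2$, I would first establish the pointwise size estimate for the kernel $W^{\rho_s}_u(v)$. By Lemma~\ref{Lemma: tW w(v)< Ctw u(u) } and the logarithmic absorption trick already used in the proof of Proposition~\ref{twisted wavelet of smooth vector in Lp(G)}, for every $\epsilon>0$ there is a constant $C_\epsilon$ with
\begin{equation*}
  |W^{\rho_s}_u(v)(x)| \leq C_\epsilon (1-|x\cdot o|^2)^{(s-\epsilon)/2}.
\end{equation*}
The function on the right is $K$-bi-invariant, so it descends to a radial function on $\B$, and the key point is that for $\epsilon$ small enough it lies in $L^1_v(G)$ with respect to the appropriate reference measure; this is again controlled by the integrability computation in Proposition~\ref{twisted wavelet of smooth vector in Lp(G)} applied with $p=1$.

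Next I would handle the ordinary (untwisted) convolution operator $f\mapsto f*|W^{\rho_s}_u(v)|$. Since $|W^{\rho_s}_u(v)|$ is dominated by a $K$-right invariant integrable function, I would invoke a weighted Young inequality on $G$: if $g \in L^1_w(G)$ for a suitable submultiplicative weight $w$ controlling the oscillation of the weight $v_t(x)=(1-|x\cdot o|^2)^t$, then $f\mapsto f*|g|$ is bounded on $L^p_t(G)$. The weight $v_t$ is not submultiplicative in general, so the clean step is to bound the weight ratio $v_t(x)/v_t(y^{-1}x)$ by a $K$-invariant submultiplicative weight $w(y)$ and then check that the dominating kernel decays fast enough that $\int_G (1-|y\cdot o|^2)^{(s-\epsilon)/2} w(y)\, dy < \infty$; the strict inequality $\alpha < p(s-n)-1$ is exactly what provides the room in $\epsilon$ to make this integral converge. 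This yields continuity of $f\mapsto f*|W^{\rho_s}_u(v)|$ on $L^p_t(G)$.

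Finally, the twisted statement follows immediately from the untwisted one by the pointwise domination
\begin{equation*}
  |f\#W^{\rho_s}_u(v)(x)| = \Big| \int_G f(y)\, W^{\rho_s}_u(v)(y^{-1}x)\,\overline{\sigma(y,y^{-1}x)}\, dy\Big| \leq |f|*|W^{\rho_s}_u(v)|(x),
\end{equation*}
since the cocycle $\sigma$ has modulus one. Because $L^p_t(G)$ is solid, boundedness of the majorant operator forces boundedness of $f\mapsto f\#W^{\rho_s}_u(v)$, giving the ``in particular'' clause for free.

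I expect the main obstacle to be the weight-control step: establishing that the ratio of the weight $v_t$ along translations is dominated by a submultiplicative $K$-invariant weight with growth slow enough to be integrated against the kernel decay $(1-|x\cdot o|^2)^{(s-\epsilon)/2}$. This is where the geometry of $G=\SU(n,1)$ acting on the ball, and the precise hypothesis $-1<\alpha<p(s-n)-1$, must be used, rather than merely formal properties of convolution. Once the correct submultiplicative majorant is identified and its integrability against the kernel verified, the remaining pieces are the routine Young estimate and the solidity argument for the twisted version.
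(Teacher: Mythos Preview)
Your reduction to the pointwise kernel bound $|W^{\rho_s}_u(v)(x)|\le C_\epsilon(1-|x\cdot o|^2)^{(s-\epsilon)/2}$ and the final domination $|f\#W^{\rho_s}_u(v)|\le |f|*|W^{\rho_s}_u(v)|$ are exactly what the paper does. The gap is in the middle step: a weighted Young inequality of the form $L^1_w * L^p_t \subset L^p_t$ is \emph{too crude} to reach the full parameter range $-1<\alpha<p(s-n)-1$.

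Concretely, set $t=\alpha+n+1-sp/2$ and $v_t(x)=(1-|x\cdot o|^2)^t$. The identity used in the paper,
\[
(1-|y^{-1}x\cdot o|^2)^r=(1-|x\cdot o|^2)^r(1-|y\cdot o|^2)^r\,|1-(x\cdot o,y\cdot o)|^{-2r},
\]
shows that $v_t(xy)/v_t(x)=v_t(y)\,|1-(y\cdot o,x^{-1}\cdot o)|^{-2t}$. The cross term $|1-(y\cdot o,x^{-1}\cdot o)|$ ranges over $[\,1-|y\cdot o|,\,2\,]$ as $x$ varies, so the \emph{best} one-variable majorant is $w(y)^p\asymp(1-|y\cdot o|^2)^{-|t|}$. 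Plugging this into Young forces
\[
\int_G (1-|y\cdot o|^2)^{(s-\epsilon)/2-|t|/p}\,dy<\infty,\qquad\text{i.e. } |t|<p\Bigl(\tfrac{s-\epsilon}{2}-n\Bigr),
\]
which after unwinding gives only $n(p-1)-1<\alpha<p(s-n)-n-1$. This misses both endpoints of the stated range by exactly $n$ (even when $p=1$). The loss is intrinsic: bounding the two-variable factor $|1-(w,z)|^{-(s-\epsilon)}$ by a function of one variable costs a full power of $(1-|z|^2)$.

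The paper avoids this by \emph{not} passing to a one-variable weight. It keeps the exact kernel, transfers the convolution to the ball via $K$-invariance, and recognizes the resulting operator as a Forelli--Rudin type integral
\[
Sf(w)=(1-|w|^2)^{(s-\epsilon)/2}\int_{\B}\frac{(1-|z|^2)^{(s-\epsilon)/2-n-1}}{|1-(w,z)|^{\,s-\epsilon}}\,f(z)\,dz,
\]
whose $L^p_{\alpha-sp/2}(\B)$-boundedness is Zhu's Theorem~2.10 (a Schur test, not Young), and whose hypotheses translate precisely to $-1<\alpha<p(s-n)-1$. So the ``weight-control step'' you flagged cannot be closed within the Young framework; you need the two-variable Schur estimate on the ball.
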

In \cite{Christensen2016} this result was proved for a solvable
and simply connected subgroup of $G$ in Corollary 3.10. The proof below
is similar, but we include it for completeness.
\begin{proof}
  Let $F\in L^p_{\alpha+n+1-sp/2}(G)$ and
  define $$\ft(x):=\int_K F(xk)\Id k.$$ Then $\ft$ is $K$-right
  invariant function on $G$. Therefore, there is a
  corresponding $f\in L^p_{\alpha-sp/2}(\B)$. Now, for
  $\epsilon>0$ small enough such that
  $-(s-\epsilon)p/2<\alpha-sp/2+1<p\left((s-\epsilon)/2-n\right)-1
  $
  whenever $-sp/2<\alpha-sp/2+1<p\left(s/2-n\right)-1 $, we
  have
  \begin{align*}
    |F|*|W^{\rho_s}_u(v)|(x)=& \int_G |F(y)|\,|W^{\rho_s}_u(v)(y^{-1}x)|\, dy\\ 
    \leq &C \int_G |F(y)|(1-|y^{-1}x\cdot o|^2)^{s/2}|1-\log(1-|y^{-1}x\cdot o|^2)|\, dy\\
    \leq& C  \int_G |F(y)|(1-|y^{-1}x\cdot o|^2)^{(s-\epsilon)/2}\, dy\\ 
    = &C  \int_{G/K} |\ft(y)|(1-|y^{-1}x\cdot o|^2)^{(s-\epsilon)/2}\, dy.
  \end{align*}
  If we assume that
  $x=\left(\begin{array}{cc} A_x & b_x \\ c_x^t&d_x           \end{array} \right)$ ,  
  $y=\left(\begin{array}{cc}              A_y& b_y\\ c_y^t&d_y           \end{array} \right)$, 
  $w=x\cdot o = b_xd_x^{-1}$, and $z=y\cdot o= b_yd_y^{-1}$,
  then $$d_{y^{-1}x}=\overline{d}_yd_x(1-( w,z) )$$ and $$|d_x|^{-(s-\epsilon)}=(1-|x\cdot o|^2)^{(s-\epsilon)/2}.$$ Therefore,
  \begin{align*}
    (1-|y^{-1}x\cdot o|^2)^{(s-\epsilon)/2} 
    =&|d_{y^{-1}x}|^{-(s-\epsilon)}\\
    =&(1-|x\cdot o|^2)^{(s-\epsilon)/2}\, (1-|y\cdot o|^2)^{(s-\epsilon)/2}
       (1-(x\cdot o,y\cdot o))^{-(s-\epsilon)}.
  \end{align*}
  Thus,
  \begin{align*}
    |F|*|W^{\rho_s}_u(v)|(x)=& C  \int_{G/K} |\ft(y)|\,\frac{(1-|x\cdot o|^2)^{(s-\epsilon)/2}\, 
                               (1-|y\cdot o|^2)^{(s-\epsilon)/2}}{|1-( x\cdot o,y\cdot o ) |^{(s-\epsilon)}} \, dy\\ 
    =&C  (1-|w|^2)^{(s-\epsilon)/2}\int_{\B} |f(z)|\, \frac{(1-|z|^2)^{(s-\epsilon)/2-n-1}}{|1-( w,z ) |^{(s-\epsilon)}}\, dz.
  \end{align*}
  According to
  \cite[Theorem
  2.10]{Zhu2005}, the
  operator $S$ which is
  given
  by
  $$Sf(z)=(1-|w|^2)^{(s-\epsilon)/2}\int_{\B}
  |f(z)|\,
  \frac{(1-|z|^2)^{(s-\epsilon)/2-n-1}}{|1-(
    w,z )
    |^{(s-\epsilon)}}\,
  dz $$
  is continuous on
  $L^p_{\alpha-sp/2}(\B)$
  whenever
  $$-(s-\epsilon)p/2<\alpha-sp/2+1<p\left((s-\epsilon)/2-n\right)-1,
  $$
  which is equivalent
  to
  $-1<\alpha<p(s-n)-1$. Since
  $$\|f\|_{L^p_{\alpha-sp/2}(\B)}=\|\ft\|_{L^p_{\alpha+n+1-sp/2}(G/K)}=\|F\|_{L^p_{\alpha+n+1-sp/2}(G)},$$
  the operator
  $F\mapsto
  F*|W^{\rho_s}_u(v)|$
  is continuous on
  $L^p_{\alpha+n+1-sp/2}(G)$. The
  second part is clear
  from the relation
  $
  |F\#W^{\rho_s}_u(v)(x)|\leq
  |F|*|W^{\rho_s}_u(v)(x)|$.
\end{proof}

We conclude our section
with the following main
result 
which extends \cite[Proposition 3.16(v)]{Christensen2016}
to the projective representation for irrational $s$.

\begin{theorem}
  Let $1\leq p < \infty$ and $-1<\alpha<p(s-n)-1$, and let $v\in \fHS$
  be a nonzero smooth vector. The Bergman space $A^p_\alpha(\B)$ is
  the twisted coorbit space of $L^p_{\alpha+n+1-s p/2}(G)$ via the
  projective representation $(\fHS,\rho_s)$. That is,
  $A^p_\alpha(\B)= \Co_{\rho_s}^v L^p_{\alpha+n+1-s p/2}(G)$ for
  $\alpha<p(s-n)-1$.
\end{theorem}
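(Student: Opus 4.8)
The plan is to deduce this from the special case $u=1_\B$ already established together with the vector-independence result of Theorem~\ref{thm:u-indep}. First I would observe that the constant function $1_\B$ is a polynomial, hence a nonzero smooth vector by Theorem~\ref{thm: properties of smooth vectors for proj rep}(1), and that the preceding theorem identifies $A^p_\alpha(\B)=\Co_{\rho_s}^{1_\B}L^p_{\alpha+n+1-sp/2}(G)$. It therefore suffices to show that for any nonzero smooth vector $v\in\fHS$ one has $\Co_{\rho_s}^{v}L^p_{\alpha+n+1-sp/2}(G)=\Co_{\rho_s}^{1_\B}L^p_{\alpha+n+1-sp/2}(G)$, and this I would obtain by applying Theorem~\ref{thm:u-indep} with $u_1=1_\B$ and $u_2=v$.

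The bulk of the work is verifying the hypotheses of Theorem~\ref{thm:u-indep} for the pair $(1_\B,v)$. That both vectors satisfy Assumption~\ref{assumptionrhocoorbit} is exactly the content of Theorem~\ref{twisted coorbit of Bergman space is well defined}, which applies to any nonzero smooth vector. The continuity condition, namely that $f\mapsto f\#W^{\rho_s}_{u_j}(u_i)$ is bounded on $B=L^p_{\alpha+n+1-sp/2}(G)$, is precisely Theorem~\ref{Thm:convo is cont. on B, proj case} applied to the smooth vectors $u_i,u_j\in\{1_\B,v\}$ and then restricted to the reproducing kernel subspace $B_{u_i}$.

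The remaining condition is the cross-reproducing identity $W^{\rho_s}_{u_i}(\lambda)\#W^{\rho_s}_{u_j}(u_i)=C_{i,j}W^{\rho_s}_{u_j}(\lambda)$, which I would verify by repeating the computation in the proof of Theorem~\ref{twisted coorbit of Bergman space is well defined} while keeping the two vectors distinct. Expanding the twisted convolution and using $\rho(y^{-1}x)=\sigma(y^{-1},x)\overline{\sigma(y,y^{-1})}\rho(y)^{-1}\rho(x)$ from \eqref{eq:rhoInv} together with the cocycle identity $\sigma(y,y^{-1})=\sigma(y,y^{-1}x)\sigma(y^{-1},x)$, all the $\sigma$ factors cancel and one is left with $\int_G(\lambda,\rho(y)u_i)(\rho(y)u_i,\rho(x)u_j)\,dy$. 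The Duflo--Moore orthogonality relations of Theorem~\ref{thm:duflomoore}, valid since $\rho_s$ is square-integrable and $G=\SU(n,1)$ is unimodular, evaluate this to $c_\rho^2\|u_i\|_{\fH_s}^2\,W^{\rho_s}_{u_j}(\lambda)(x)$, so $C_{i,j}=c_\rho^2\|u_i\|_{\fH_s}^2\neq 0$. As in the analyzing-vector argument of Theorem~\ref{twisted coorbit of Bergman space is well defined}, this identity is first obtained for $\lambda\in\fH_s$ and then extended to all $\phi\in\DfHS$ by the same Tonelli and weak-continuity reasoning.

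I expect the main obstacle to be the cocycle bookkeeping in this last condition: one must track $\sigma$ carefully through the twisted convolution and the inversion formula, confirm the cancellation via the cocycle identity, and then justify extending the resulting orthogonality relation from $\fH_s$ to the distribution space $\DfHS$. Once this is in hand, the conclusion $\Co_{\rho_s}^{v}L^p_{\alpha+n+1-sp/2}(G)=A^p_\alpha(\B)$ follows immediately from Theorem~\ref{thm:u-indep}.
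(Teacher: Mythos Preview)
Your proposal is correct and follows essentially the same route as the paper: reduce to the case $u=1_\B$ via the preceding theorem, then invoke Theorem~\ref{thm:u-indep} after checking Assumption~\ref{assumptionrhocoorbit} (via Theorem~\ref{twisted coorbit of Bergman space is well defined}), the convolution continuity (via Theorem~\ref{Thm:convo is cont. on B, proj case}), and the cross-reproducing identity from Duflo--Moore orthogonality extended to $\DfHS$ by the Tonelli/weak-continuity argument. Your account of the cocycle cancellation and the constants $C_{i,j}=c_\rho^2\|u_i\|_{\fH_s}^2$ is in fact more explicit than the paper's.
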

\begin{proof}
  Assume $u=1_{\B}$. By Theorem \ref{Thm:convo is cont. on B, proj
    case}, we have
  $A^p_\alpha(\B)= \Co_{\rho}^u L^p_{\alpha+n+1-s p/2}(G)$. We will
  show that the twisted coorbit
  $ \Co_{\rho_s}^v L^p_{\alpha+n+1-s p/2}(G)$ does not depend on the
  analyzing vector $v$, by applying Theorem \ref{thm:u-indep}. First,
  according to Theorem \ref{Thm:convo is cont. on B, proj case}, the
  operators $ f\mapsto f\#W^{\rho_s}_u(v)$ and
  $ f\mapsto f\#W^{\rho_s}_v(u)$ are continuous on
  $L^p_{\alpha+n+1-s p/2}(G)$.
  Next, we show that
  $W^{\rho_s}_u(\phi)\#W^{\rho_s}_v(u)=CW^{\rho_s}_v(\phi)$ for all
  $\phi\in \DfHS$. For $f\in\fHS$, we can use the orthogonality
  relation in Theorem \ref{thm:duflomoore} to get
  $W^{\rho_s}_u(f)\#W^{\rho_s}_v(u)= C W^{\rho_s}_v(f)$. To extend
  this relation to the dual of the smooth vectors, it is enough to
  show that
  $$\phi\mapsto \int_G \left\langle \phi ,\rho(x)u\right\rangle\,
  \left\langle \rho(x)v,u\right\rangle\,dx$$
  is weakly continuous. Same argument, as in the proof of Theorem
  \ref{twisted coorbit of Bergman space is well defined}, can be made
  to show our claim. Therefore, the twisted coorbit spaces
  $\Co_{\rho_s}^v L^p_{\alpha+n+1-s p/2}(G)$ are all equal to the space
  $A^p_\alpha(\B)$.
\end{proof}

\subsection{New atomic decompositions and frames 
  for Bergman Spaces}
\noindent
In this section we generate a wavelet frame and an atomic
decomposition of Bergman spaces depending on the coorbit theory, where
this discretization would work for all projective representations with
$s>n$, including the non-integrable cases. Also, we have more freedom
in choosing the wavelet $u$. That is we show that any nonzero smooth
vector can be used to generate a Banach frame and an atomic
decomposition for Bergman spaces.
This result removes the restriction of $s>n$ being rational in 
Theorem 3.17 from \cite{Christensen2016}. Also, it completely avoids
the use of covering groups of $G$ that was present in that paper, essentially
generating atoms from points in $G$ rather than points on a cover.

\begin{theorem}
  Assume that $1\leq p<\infty$, $s>n$, and $-1<\alpha<p(s-n)-1$. For a
  nonzero smooth vector $u$ for $\rho_s$, we can choose $\epsilon$
  small enough such that for every $U_\epsilon$-well spread set
  $\{x_i\}_{i\in I}$ in $G$ the following hold.
  \begin{enumerate}
  \item(Twisted wavelet frame) The family $\{\rho_s(x_i)u:\, i\in I\}$
    is a Banach frame for $A^p_\alpha(\B)$ with respect to the
    sequence space $\ell^p_{\alpha+n+1-ps/2}(I)$. That is, there exist
    constants $A,B>0$ such that for all $f\in A^p_\alpha(\B)$ we have
$$A\|f\|_{A^p_\alpha(\B)}\leq \|\{\left\langle f,\rho_s(x_i)u\right\rangle\}\|_{\ell^p_{\alpha+n+1-s p/2}(I)}\leq B \|f\|_{A^p_\alpha(\B)},$$ and $f$ can be reconstructed by 
$$
f = (W^{\rho_s}_u)^{-1} S^{-1}_1 \Big(\sum_i W^{\rho_s}_u(f)(x_i) \psi
_i \# W^{\rho_s}_u(u)\Big)$$
where $\{\psi_i\}$ is any $U_\epsilon$-BUPU with
$\supp \psi_i\subset x_iU_\epsilon$.
\item(Atomic decomposition) There exists a family of functionals
  $\{\gamma_i\}_{i\in I}$ on $A^p_\alpha(\B) $ such that the family
  $\{\gamma_i, \rho_s(x_i)u\}$ forms an atomic decomposition for
  $A^p_\alpha(\B)$ with respect to the sequence space
  $\ell^p_{\alpha+n+1-ps/2}(I)$, so that any $f\in A^p_\alpha(\B)$ can
  be reconstructed by
$$ 
f = \sum_i \gamma_i(f)\,\rho_s(x_i)u .$$
\end{enumerate}
\end{theorem}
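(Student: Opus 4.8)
The plan is to obtain the statement as a direct application of the general discretization machinery of Theorem~\ref{Thm:atomicdecomp} to the projective representation $(\fHS,\rho_s)$ of $G=\SU(n,1)$, taken together with the twisted left-invariant BF-space $B=L^p_{\alpha+n+1-sp/2}(G)$. By the preceding theorem we already know that $A^p_\alpha(\B)=\Co_{\rho_s}^u L^p_{\alpha+n+1-sp/2}(G)$ for every nonzero smooth $u$, so once Theorem~\ref{Thm:atomicdecomp} is shown to apply, the Banach frame and atomic decompositions it produces for the coorbit space transfer to $A^p_\alpha(\B)$. Since $\rho_s$ is a local representation, its cocycle $\sigma$ may be taken equal to $1$ on a neighbourhood of $(e,e)$, so the smoothness requirement on $\sigma$ in Theorem~\ref{Thm:atomicdecomp} is automatically met for all sufficiently small $\epsilon$.

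The substance of the argument is the verification of Assumption~\ref{assumptionatomic} for this triple. The weighted space $L^p_{\alpha+n+1-sp/2}(G)$ is solid, and for $1\leq p<\infty$ left and right translation are continuous on it; these are the standard BF-space properties already in force in the preceding theorems of this section. That $B$, $\rho_s$ and $u$ satisfy Assumption~\ref{assumptionrhocoorbit} is exactly Theorem~\ref{twisted coorbit of Bergman space is well defined}. The remaining conditions are differentiability statements: since $u$ is a smooth vector for $\rho_s$, it is $\rho_s$-weakly and $\rho_s^*$-weakly differentiable to all orders, in particular up to $n=\dim(G)$. Finally, for any $Y_1,\dots,Y_n\in\fg$ the vectors $\rho_s^*(Y_1)\cdots\rho_s^*(Y_n)u$ and $\rho_s(Y_1)\cdots\rho_s(Y_n)u$ are again smooth, so the boundedness on $B$ of the two convolution operators appearing in Assumption~\ref{assumptionatomic} follows immediately from Theorem~\ref{Thm:convo is cont. on B, proj case} applied with $v$ taken to be the relevant smooth vector.

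With Assumption~\ref{assumptionatomic} verified, Theorem~\ref{Thm:atomicdecomp} yields, for $\epsilon$ small enough, the invertibility of $T_1,T_2,T_3$, the Banach frame $\{\rho_s^*(x_i)u\}$, and the two atomic decompositions, all relative to the abstract sequence space $\dot b$ attached to $B$. It then remains to identify $\dot b$ with $\ell^p_{\alpha+n+1-ps/2}(I)$: because the weight $(1-|x\cdot o|^2)^{\alpha+n+1-sp/2}$ varies by only a bounded factor over each cell $x_iU_\epsilon$ for small $\epsilon$, the norm $\|\sum_i c_i 1_{x_iU_\epsilon}\|_B$ is comparable to $\bigl(\sum_i |c_i|^p(1-|x_i\cdot o|^2)^{\alpha+n+1-sp/2}\bigr)^{1/p}$, which is the $\ell^p_{\alpha+n+1-ps/2}(I)$ norm. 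Composing this identification with $A^p_\alpha(\B)=\Co_{\rho_s}^u L^p_{\alpha+n+1-sp/2}(G)$ and pushing the coorbit-space conclusions of Theorem~\ref{Thm:atomicdecomp} through the isomorphism $W^{\rho_s}_u$ gives the stated frame inequalities and reconstruction formulas.

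I expect the main obstacle to be twofold and to lie in the final identification step rather than in the verification of the hypotheses. First, one must control cleanly the local variation of the weight over the cells $x_iU_\epsilon$ so that the weighted $\ell^p$-norm is genuinely equivalent to the $\dot b$-norm uniformly in $\{x_i\}$. Second, the abstract theorem delivers the frame vectors $\rho_s^*(x_i)u$, whereas the statement uses $\rho_s(x_i)u$; since $\rho_s$ is unitary and $u\in\fH_s$, one reconciles these by noting that $\rho_s^*$ coincides with $\rho_s$ under the standard identification $\fH_s\cong\fH_s^*$, so that $\langle f,\rho_s(x_i)u\rangle=W^{\rho_s}_u(f)(x_i)$ are precisely the frame coefficients. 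Tracking the accompanying cocycle factors through this reconciliation is the least automatic part of the computation.
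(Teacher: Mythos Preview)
Your proposal is correct and follows essentially the same route as the paper: verify Assumption~\ref{assumptionatomic} for the triple $(L^p_{\alpha+n+1-sp/2}(G),\rho_s,u)$ using Theorem~\ref{twisted coorbit of Bergman space is well defined} and Theorem~\ref{Thm:convo is cont. on B, proj case}, then invoke Theorem~\ref{Thm:atomicdecomp}. You are in fact more careful than the paper in two places it glosses over, namely the identification of the abstract sequence space $\dot b$ with $\ell^p_{\alpha+n+1-ps/2}(I)$ and the passage from $\rho_s^*(x_i)u$ to $\rho_s(x_i)u$; both of your explanations are correct.
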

\begin{proof}
  We show that the assumptions of Theorem \ref{Thm:atomicdecomp} are
  satisfied. Under the conditions on $p$ and $s$, the twisted coorbit
  of $L^p_{\alpha+n+1-ps/2}(G)$ is well defined and $u$ satisfies
  Assumption \ref{assumptionrhocoorbit} as we have seen in Theorem
  \ref{twisted coorbit of Bergman space is well defined}, and it is
  equal to $A^p_\alpha(\B)$. Since $u$ is smooth vector for $\fH_s$,
  and $\fHS$ is continuously embedded in its dual $\DfHS$, the vector
  $u$ is $\rho$ - and $\rho^*$-weakly differentiable. According to
  Theorem \ref{Thm:convo is cont. on B, proj case}, the
  mappings
  $$f\mapsto f*|W^{\rho_s}_{\rho(E_\alpha)u}(u)| \quad \text{and}
  \quad f\mapsto f*|W^{\rho_s}_u(\rho^*(E_\alpha) u)|$$
  are continuous on $L^p_{\alpha+n+1-ps/2}(G)$. Therefore, we can
  choose $\epsilon$ small enough so that the family
  $\{\rho_s(x_i)u\}$ forms a frame and an atomic decomposition for
  $A^p_\alpha(\B)$ with reconstruction operators that are given in
  Theorem \ref{Thm:atomicdecomp}.
\end{proof}

\appendix

\section{Decompositions of reproducing kernel spaces for twisted convolution}
\label{appendixA}
\noindent
{}From now on we let $\phi(x)=W^\rho_u(u)$ for some fixed $u\in\fS.$
Then $B_u^\sigma = \{ f\in B \mid f=f\#\phi \}$.  Given a compact
neighbourhood $U$ of the identity in $G$, a $U$-dense and well-spread
sequence $\{ x_i\}\subseteq G$ and a 
$U$-BUPU $\{\psi_i \}$, we formally define
the operators
\begin{align*}
  T_1 f &= \sum_i f(x_i) \sigma(x,x^{-1}x_i)\psi_i\#\phi \\
  T_2 f &= \sum_i \lambda_i(f) \ell_{x_i}^\sigma \phi \\
  T_3 f &= \sum_i c_i f(x_i) \ell_{x_i}^\sigma \phi
\end{align*}
where
$\lambda_i(f) = \int f(y)\psi_i(y)\overline{\sigma(y,y^{-1}x_i)}\,dy$
and $c_i = \int \psi_i(y)\,dy$. 
The following results will establish when these operators
are well defined on $B_u^\sigma$.

Define the local oscillations
$$
\osc_{U}^{r^\sigma} f(x) = \sup_{y\in U} | r^\sigma_y f(x)-f(x)|
\quad\text{and} \quad \osc_{U}^{\ell^\sigma} f(x) = \sup_{y\in U} |
\ell^\sigma_y f(x)-f(x)|.
$$

\begin{proposition}
  If $f\in B_u^\sigma$ then
  \begin{align*}
    |T_1f(x)-f(x)| &\leq |f|*\osc^{r^\sigma}_{U^{-1}} \phi(x) \\
    |T_2f(x)-f(x)| &\leq |f|*\osc^{\ell^\sigma}_{U^{-1}} \phi(x) \\
    |T_3f(x)-f(x)| &\leq |f|*\osc^{r^\sigma}_{U^{-1}} \phi*(|\phi|
                     +\osc^{\ell^\sigma}_{U^{-1}} \phi)(x) 
                     + |f|*\osc_{U^{-1}}^{\ell^\sigma}\phi(x).
  \end{align*}
\end{proposition}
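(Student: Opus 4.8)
The plan is to prove all three inequalities by one mechanism: express each error $T_jf-f$ as a single average against the reproducing kernel, exploiting that $f\in B_u^\sigma$ obeys $f=f\#\phi$ and that $\sum_i\psi_i\equiv 1$, and then read off the pointwise deviations as the twisted oscillations $\osc^{r^\sigma}_{U^{-1}}\phi$ and $\osc^{\ell^\sigma}_{U^{-1}}\phi$. Two facts drive everything: the support condition $\supp\psi_i\subseteq x_iU$, which forces $y^{-1}x_i\in U^{-1}$ whenever $\psi_i(y)\neq 0$, and the cocycle identity $\sigma(ab,c)\sigma(a,b)=\sigma(a,bc)\sigma(b,c)$, which is what allows the stray phases to reassemble into genuine twisted left and right translations even though $\sigma$ is not globally continuous.

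I would treat $T_2$ first, as it is cleanest. Inserting $f=f\#\phi$ and $\sum_i\psi_i=1$ gives
\[ f(x)-T_2f(x)=\sum_i\int \psi_i(y)\,f(y)\Big[\ell_y^\sigma\phi(x)-\overline{\sigma(y,y^{-1}x_i)}\,\ell_{x_i}^\sigma\phi(x)\Big]\,dy, \]
and the key step is to verify, using the cocycle identity with $(a,b,c)=(y,\,y^{-1}x_i,\,x_i^{-1}x)$, that the bracket equals $\ell_y^\sigma\big(\phi-\ell_{y^{-1}x_i}^\sigma\phi\big)(x)$. Since $y^{-1}x_i\in U^{-1}$ and $|\sigma|=1$, its modulus is at most $\osc^{\ell^\sigma}_{U^{-1}}\phi(y^{-1}x)$, and summing over $i$ recombines the partition into $|f|*\osc^{\ell^\sigma}_{U^{-1}}\phi$. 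For $T_1$ the atom $\psi_i\#\phi$ is precisely the $i$-th piece of the reproducing integrand, so after the same substitutions the only discrepancy is that $f$ has been frozen to $f(x_i)$, carrying the phase $\sigma(x,x^{-1}x_i)$ that the cocycle identity demands; I would convert this frozen-sample error into a right oscillation of $\phi$ through the commutation rule $r_v^\sigma(f\#\phi)=f\#(r_v^\sigma\phi)$, which gives $r_v^\sigma f-f=f\#(r_v^\sigma\phi-\phi)$ and hence bounds the contribution by $|f|*\osc^{r^\sigma}_{U^{-1}}\phi$.

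For $T_3$ the atom is the translated kernel $\ell_{x_i}^\sigma\phi$, as in $T_2$, but the coefficient is the crude sample $c_if(x_i)$ rather than the average $\lambda_i(f)$. I would split $T_3f-f$ into the part coming from replacing $\ell_y^\sigma\phi$ by $\ell_{x_i}^\sigma\phi$ and the part coming from freezing $f$ at $x_i$. The first is handled verbatim as in $T_2$ and yields the summand $|f|*\osc^{\ell^\sigma}_{U^{-1}}\phi$. The second is a sampling error in $f$, which I would again route through $f=f\#\phi$ and the commutation rule to produce $|f|*\osc^{r^\sigma}_{U^{-1}}\phi$; since this error is still paired with the translated kernel, estimating $|\ell_{x_i}^\sigma\phi|$ by $|\phi|+\osc^{\ell^\sigma}_{U^{-1}}\phi$ gives the nested convolution $|f|*\osc^{r^\sigma}_{U^{-1}}\phi*(|\phi|+\osc^{\ell^\sigma}_{U^{-1}}\phi)$.

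The hard part will not be any analytic estimate but the cocycle bookkeeping. At each stage one produces products of $\sigma$ evaluated at points far from the identity, and the whole argument rests on these telescoping, through repeated use of the cocycle identity, into exactly the twisted translates $r_v^\sigma$ and $\ell_v^\sigma$ with $v\in U^{-1}$ that define the oscillations, with no residual global phase surviving. Choosing $\epsilon$ small enough that $\sigma$ is $C^\infty$ on $U_\epsilon\times U_\epsilon$ is what legitimizes these local computations; the delicate point, especially for $T_1$, is to carry out the conversion so that the smeared atom does not generate a spurious extra convolution, leaving the estimate genuinely of single-convolution type.
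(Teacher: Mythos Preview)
Your plan is essentially the paper's own proof. For $T_2$ the paper does exactly your computation: it writes $f-T_2f=\int\sum_i\psi_i(y)f(y)\,\ell_y^\sigma[\phi-\ell_{y^{-1}x_i}^\sigma\phi](x)\,dy$ via the cocycle identity and bounds by $|f|*\osc^{\ell^\sigma}_{U^{-1}}\phi$. For $T_3$ the paper uses the same add-and-subtract you describe, estimating $|\ell^\sigma_{x_i}\phi|$ by $|\ell^\sigma_y\phi|+|\ell^\sigma_y\osc^{\ell^\sigma}_{U^{-1}}\phi|$ and then feeding the $r^\sigma$-oscillation of $f$ through $f=f\#\phi$.

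For $T_1$ the paper's route is a minor rephrasing of yours: it first bounds the pre-convolved function $g(x)=\sum_i f(x_i)\sigma(x,x^{-1}x_i)\psi_i(x)$ against $f$ by $\osc^{r^\sigma}_{U^{-1}}f$, and then uses $f=f\#\phi$ together with the cocycle identity $\sigma(z,z^{-1}xy)=\sigma(z,z^{-1}x)\sigma(x,y)\overline{\sigma(z^{-1}x,y)}$ to transfer the oscillation onto $\phi$, yielding $\osc^{r^\sigma}_{U^{-1}}f\le |f|*\osc^{r^\sigma}_{U^{-1}}\phi$. This is precisely your commutation rule $r_v^\sigma(f\#\phi)=f\#(r_v^\sigma\phi)$ unpacked. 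Your caution about a ``spurious extra convolution'' is apt: the paper's written argument, like yours, literally controls $|g-f|$ rather than $|T_1f-f|=|(g-f)\#\phi|$, so the passage to the stated single-convolution bound is not made explicit there either; it does not affect the subsequent corollary, which already assumes $f\mapsto f*|\phi|$ bounded.
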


\begin{proof}
  We see that
  $$
  |\sum_i f(x_i) \sigma(x,x^{-1}x_i)\psi_i(x) \\-f(x)| \leq \sum_i
  |f(x_i)\sigma(x,x^{-1}x_i)-f(x)|\psi_i(x)
  $$
  and for $x\in x_iU$ we get that $x_i \in xU^{-1}$, so
  $$
  |f(x_i)\sigma(x,x^{-1}x_i)-f(x)| \leq \sup_{y\in U^{-1}} |
  f(xy)\sigma(x,y) - f(x)| = \osc_{U^{-1}}^{r^\sigma} f(x).
  $$
  Next, if $f=f\#\phi$ we get
  \begin{equation*}
    |r^\sigma_y f(x) - f(x)|
    \leq \int |f(z)||\phi(z^{-1}xy)\overline{\sigma(z,z^{-1}xy)}\sigma(x,y)
    - \phi(z^{-1}x)\overline{\sigma(z,z^{-1})}|\,dz.
  \end{equation*}
  Since
  $\sigma(z,z^{-1}xy) =
  \sigma(z,z^{-1}x)\sigma(x,y)\overline{\sigma(z^{-1}x,y)}$
  we get that the integral above reduces to
  $$
  \int |f(z)||r^\sigma_y \phi(z^{-1}x) - \phi(z^{-1}x)|\,dz.
  $$
  Taking supremum we get the desired result.

  We have that
  \begin{align*}
    f(x)-T_2f(x)
    &=\int f(y) \ell^\sigma_y\phi(x)\,dy
      - \int \sum_i f(y)\psi_i(y)\overline{\sigma(y,y^{-1}x_i)}\,dy 
      \ell^\sigma_{x_i} \phi(x) \\
    &=
      \int \sum_i f(y)\psi_i(y)[\ell^\sigma_y\phi(x)-
      \overline{\sigma(y,y^{-1}x_i)}
      \ell^\sigma_{x_i} \phi(x)]\,dy  \\
    &=
      \int \sum_i f(y)\psi_i(y)\ell^\sigma_y
      [\phi(x)-\ell^\sigma_{y^{-1}x_i} \phi(x)]\,dy.
  \end{align*}
  When $y\in x_iU$ for a compact neighbourhood $U$ of the identity,
  then $ y^{-1}x_i \in U^{-1}$.  Thus we get
  \begin{align*}
    |f(x)-T_2f(x)|
    &\leq
      \int \sum_i |f(y)| \psi_i(y)|\ell^\sigma_y
      \phi(x)-\ell^\sigma_{y^{-1}x_i} \phi(x)|\,dy \\
    &\leq
      \int \sum_i |f(y)| \psi_i(y)|\ell^\sigma_y
      \osc_{U^{-1}}^{\ell^\sigma} \phi |\,dy \\
    &\leq
      \int |f(y)| \ell_y \osc_{U^{-1}}^{\ell^\sigma} \phi \,dy.
  \end{align*}
  This shows the claim.
  
  \begin{equation*}
    |T_3f(x) - f(x)|
    \leq \int \sum_i \psi_i(y) |f(x_i)\ell^\sigma_{x_i}\phi(x) 
    - f(y)\ell^\sigma_y\phi(x)|\,dy
  \end{equation*}
  Let us rewrite part of the integrand when $y\in x_iU$
  \begin{align*}
    |&f(x_i)\ell^\sigma_{x_i}\phi(x) - f(y)\ell^\sigma_y\phi(x)| \\
     &= |f(x_i)\sigma(y,y^{-1}x_i)
       \ell^\sigma_y\ell^\sigma_{y^{-1}x_i}\phi(x) - f(y)\ell^\sigma_y\phi(x)| \\
     &\leq |[f(x_i)\sigma(y,y^{-1}x_i)-f(y)]
       \ell^\sigma_y\ell^\sigma_{y^{-1}x_i}\phi(x)|
       + |f(y)||\ell^\sigma_y\ell^\sigma_{y^{-1}x_i}\phi(x)\ 
       - \ell^\sigma_y\phi(x)| \\
     &= |[r^\sigma_{y^{-1}x_i}f(y)-f(y)]
       \ell^\sigma_y\ell^\sigma_{y^{-1}x_i}\phi(x)|
       + |f(y)||\ell^\sigma_y\ell^\sigma_{y^{-1}x_i}\phi(x)\ 
       - \ell^\sigma_y\phi(x)| \\
     &\leq \osc^{r^\sigma}_{U^{-1}}f(y)
       [|\ell^\sigma_y\ell^\sigma_{y^{-1}x_i}\phi(x) 
       - \ell^\sigma_y \phi(x)|+|\ell^\sigma_y\phi(x)|]
       + |f(y)||\ell^\sigma_y \osc^{\ell^\sigma}_{U^{-1}} \phi(x)| \\
     &\leq \osc^{r^\sigma}_{U^{-1}}f(y)
       [|\ell^\sigma_y\osc^{\ell^\sigma_{U^{-1}}} \phi(x)|
       +|\ell^\sigma_y\phi(x)|]
       + |f(y)||\ell^\sigma_y \osc^{\ell^\sigma}_{U^{-1}} \phi(x)|.
  \end{align*}
  As before the oscillation of $f$ can be transferred onto the kernel,
  and the final result is obtained.
\end{proof}

From this we obtain
\begin{corollary}
  \label{cor:convosc}
  Let $B$ be a solid BF-space and assume that $f\mapsto f*|\phi|$,
  $f\mapsto f*\osc^{\ell^\sigma}_{U^{-1}}\phi$ and
  $f\mapsto f*\osc^{r^\sigma}_{U^{-1}}\phi$ are bounded on $B$, then
  $T_1,T_2,T_3$ are well-defined bounded operators on $B^\#_u$.
  
  Moreover, if there are constants $C_U$ for which
  $$
  \| f*\osc^{\ell^\sigma}_{U^{-1}}\phi\| \leq C_U \| f\| \text{ and }
  \| f*\osc^{r^\sigma}_{U^{-1}}\phi\| \leq C_U \| f\|
  $$
  and $\lim_{U\to \{ e\}} C_U =0$, then there is a $U$ small enough as
  well as $U$-dense $\{ x_i\}$ such that the operators are invertible
  on $B^\#_u$.
\end{corollary}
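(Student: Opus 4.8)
The plan is to read off both assertions directly from the pointwise estimates in the preceding Proposition, using only the solidity of $B$ together with the assumed boundedness of the three convolution operators. Throughout I write $I$ for the identity operator and recall that $\phi=W^\rho_u(u)$ satisfies the reproducing identity $\phi\#\phi=\phi$; consequently any twisted convolution $g\#\phi$, and any twisted translate $\ell^\sigma_{x_i}\phi=W^\rho_u(\rho^*(x_i)u)$, again lies in $B^\#_u$. Thus once I know $T_if\in B$, membership $T_if\in B^\#_u$ will follow, since each $T_if$ is assembled from $\phi$ by twisted convolution and twisted translation.

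For the first claim I would argue one operator at a time. For $T_1$ the Proposition gives $|T_1f-f|\le |f|*\osc^{r^\sigma}_{U^{-1}}\phi$ pointwise; since $B$ is solid and $f\mapsto f*\osc^{r^\sigma}_{U^{-1}}\phi$ is bounded, this yields $\|T_1f-f\|_B\le C\|f\|_B$, whence $T_1f=f+(T_1f-f)\in B$ and $\|T_1f\|_B\le(1+C)\|f\|_B$. The estimate for $T_2$ is handled identically using $\osc^{\ell^\sigma}_{U^{-1}}\phi$. For $T_3$ the bound involves the iterated convolution $|f|*\osc^{r^\sigma}_{U^{-1}}\phi*(|\phi|+\osc^{\ell^\sigma}_{U^{-1}}\phi)$, so I would estimate it by composing two bounded operators on $B$: first convolve by $\osc^{r^\sigma}_{U^{-1}}\phi$, then by $|\phi|+\osc^{\ell^\sigma}_{U^{-1}}\phi$ (bounded because both $f\mapsto f*|\phi|$ and $f\mapsto f*\osc^{\ell^\sigma}_{U^{-1}}\phi$ are), and finally add the remaining term $|f|*\osc^{\ell^\sigma}_{U^{-1}}\phi$. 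Solidity then gives $\|T_3f-f\|_B\le C'\|f\|_B$ and hence boundedness of $T_3$ on $B^\#_u$.

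For the invertibility claim I would convert these into operator-norm bounds for $I-T_i$ on $B^\#_u$ under the refined hypothesis. Writing $M$ for the operator norm of $f\mapsto f*|\phi|$ on $B$, the same three estimates give $\|I-T_1\|\le C_U$, $\|I-T_2\|\le C_U$, and $\|I-T_3\|\le C_U(M+C_U)+C_U$. Since $C_U\to 0$ as $U\to\{e\}$, all three right-hand sides tend to $0$, so there is a compact neighbourhood $U$ of the identity, together with any $U$-dense and well-spread $\{x_i\}$ and any $U$-BUPU $\{\psi_i\}$, for which $\|I-T_i\|_{B^\#_u}<1$ simultaneously for $i=1,2,3$. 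For such $U$ each $T_i=I-(I-T_i)$ is invertible on $B^\#_u$ via the Neumann series $T_i^{-1}=\sum_{k\ge 0}(I-T_i)^k$.

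The step I expect to require the most care is not the Neumann argument but the passage from the pointwise bounds to genuine membership $T_if\in B^\#_u$. For $T_1$ the defining series is locally finite thanks to the BUPU, but for $T_2$ and $T_3$ it is not, so convergence in $B$ must be extracted from the $B$-norm estimate itself rather than from local finiteness; here solidity is essential, and one must also check that the limit inherits the reproducing property from $\phi\#\phi=\phi$ together with associativity of twisted convolution. The $T_3$ estimate, with its double convolution, is the most delicate point, since one must track that composing the two bounded convolution operators still produces an overall factor $C_U$ that vanishes as $U\to\{e\}$.
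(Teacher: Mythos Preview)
Your proposal is correct and matches the paper's approach: the paper states the Corollary immediately after the Proposition with only the phrase ``From this we obtain'', treating the passage from the pointwise oscillation bounds to boundedness (via solidity) and then to invertibility (via the Neumann series once $\|I-T_i\|<1$) as routine. Your explicit tracking of the constants, in particular the composite bound $C_U(M+C_U)+C_U$ for $T_3$, and your remarks on why $T_if$ lands back in $B^\#_u$ are details the paper leaves implicit but does not handle differently.
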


We will now use the special form of $\phi$ to find oscillation
estimates via derivatives. We have defined
$\phi(x) = \langle u, \rho(x)u\rangle$, and from this and
Remark~\ref{rem:leftandright} we get
\begin{align*}
  \osc_U^{r^\sigma} \phi(x) 
  &= \sup_{y\in U} |\langle \rho^*(x^{-1})u,\rho(y)u-u\rangle|, \\
  \intertext{and}
  \osc_U^{\ell^\sigma} \phi(x) 
  &= \sup_{y\in U} |\langle \rho^*(y)u-u, \rho(x)u\rangle|.
\end{align*}
In light of this is seems possible to evaluate the oscillation by a
certain level of smoothness of the vector $u$, and this is exactly the
approach we will take.  We let $v\in \mathcal{S}$ and
$\lambda\in \mathcal{S}^*$ be arbitrary elements and define the
functions $H(y) = \langle \lambda , \rho(y)u\rangle$ and
$K(y) = \langle \rho^*(y)u,v\rangle.$ We will now investigate the
local oscillations of $H$ and $K$ in terms of derivatives, but first
we need to introduce some notation.

If $f$ is a function on $G$ and $X$ is in $\fg$, then define
$$
Xf(y) = \frac{d}{d t}\Big|_{t=0} f(y\exp(tX)).
$$
We now fix a basis $X_1,\dots,X_n$ for the Lie algebra $\fg$, and for
a multi-index $\alpha$ we define
$$
X^\alpha f = X_1^{\alpha_1}\cdots X_n^{\alpha_n} f.
$$
We will investigate oscillations of $H$ and $K$ on the specific
neighbourhood
$$
U_\epsilon = \{ \exp(t_1X_1)\cdots \exp(t_nX_n)\mid -\epsilon \leq
t_k\leq \epsilon\}.
$$
Remember that we choose the cocycle $\sigma$ and $\epsilon>0$
such that $\sigma$ is $C^\infty$ on a neighbourhood containing
$U_\epsilon\times U_\epsilon$.
According to Lemma 2.5 in \cite{Christensen2012} there is a constant $C_\epsilon$
such that
\begin{align*}
  \sup_{y\in U_\epsilon} |H(y)-H(e)|
  &\leq C_\epsilon \sum_{\stackrel{1\leq |\alpha|\leq n}{|\delta|=|\alpha|}}
    \int_{[-\epsilon,\epsilon]^{|\delta|}} |X^\alpha H (\tau_\delta(t_1,\dots,t_n))| 
    (dt_1)^{\delta_1} \dots (dt_n)^{\delta_n}, \\
  \intertext{and}
  \sup_{y\in U_\epsilon} |K(y)-K(e)|
  &\leq C_\epsilon \sum_{\stackrel{1\leq |\alpha|\leq n}{|\delta|=|\alpha|}}
    \int_{[-\epsilon,\epsilon]^{|\delta|}} |X^\alpha K (\tau_\delta(t_1,\dots,t_n))| 
    (dt_1)^{\delta_1} \dots (dt_n)^{\delta_n},
\end{align*}
where
$\tau_\delta(t_1,\dots,t_n) =
\exp(\delta_1t_1X_1)\cdots\exp(\delta_nt_nX_n)$
for a multi-index $\delta$.  Due to the special form of $H$
$$
XH(y) = \frac{d}{dt}\Big|_{t=0} \langle \lambda, \rho(y\exp(tX))
u\rangle = \frac{d}{dt}\Big|_{t=0} \langle
\lambda,\rho(y)\rho(\exp(tX))u\rangle \overline{\sigma(y,\exp(tX))}.
$$
Therefore $X^\delta H(y)$ can be expressed as a sum
$$
\sum_{|\gamma|\leq |\delta|} \langle \lambda, \rho(y)
\rho(X_n)^{\gamma_n}\cdots\rho(X_1)^{\gamma_1}u \rangle g_\gamma(y),
$$
where $g_\gamma$ is an appropriate derivative of the cocycle $\sigma$
of order $|\gamma|$. Notice, that the $g_\gamma$'s do not depend on
the vectors $v$ and $\lambda$ used to define $H$ and $K$.  If $y$ is
in the compact set $U_\epsilon$ the functions $g_\gamma$ are uniformly
bounded, and therefore there is a constant $D_\epsilon$ such that
\begin{align*}
\sup_{y\in U_\epsilon} &|H(y)-H(e)| \\
&\leq D_\epsilon \sum_{\stackrel{1\leq
    |\alpha|\leq n}{|\delta|=|\alpha|}}
\int_{[-\epsilon,\epsilon]^{|\delta|}} |\langle
\lambda,\rho(\tau_\delta(\mathbf{t})) \rho(X_n)^{\delta_n}\cdots
\rho(X_1)^{\delta_1} u)| (dt_1)^{\delta_1} \dots (dt_n)^{\delta_n},
\end{align*}
when we write $\mathbf{t}=(t_1,t_2,\dots,t_n)$.
From this we get that
\begin{align*}
\osc^{r^\sigma}_U &\phi(x) \\ 
&\leq D_\epsilon \sum_{\stackrel{1\leq |\alpha|\leq n}{|\delta|=|\alpha|}}
 \int_{[-\epsilon,\epsilon]^{|\delta|}}
|\langle u,\rho(x\tau_\delta(\mathbf{t}))
\rho(X_n)^{\delta_n}\cdots \rho(X_1)^{\delta_1} u)| (dt_1)^{\delta_1}
\dots (dt_n)^{\delta_n}.
\end{align*}
Treating $K$ the same way we get
\begin{align*}
&\osc^{\ell^\sigma}_U \phi(x) \\
&\quad\leq D_\epsilon \sum_{\stackrel{1\leq |\alpha|\leq  n}{|\delta|=|\alpha|}}
  \int_{[-\epsilon,\epsilon]^{|\delta|}}
|\langle \rho^*(X_n)^{\delta_n} \cdots \rho^*(X_1)^{\delta_1} u
,\rho(\tau_\delta(\mathbf{t})^{-1}x)u )| (dt_1)^{\delta_1} \dots
(dt_n)^{\delta_n}.
\end{align*}

\begin{lemma}
  Assume that $B$ is a solid BF-space on which left and right
  translations are continuous.  If
  $f\mapsto f*|\langle u,\rho(\cdot)\rho(X_n)^{\delta_n}\cdots
  \rho(X_1)^{\delta_1} u\rangle|$
  and
  $f\mapsto f*|\langle \rho^*(X_n)^{\delta_n}\cdots
  \rho^*(X_1)^{\delta_1} u,\rho(\cdot)u\rangle|$
  are bounded on $B$ for all $|\delta|\leq \dim(G)$, then
  $$
  \| f*\osc_U^{\ell^\sigma} \phi\|_B \leq C_\epsilon \| f\|_B
  $$
  and
  $$
  \| f*\osc_U^{r^\sigma} \phi\|_B \leq C_\epsilon \| f\|_B.
  $$
  Moreover, $\lim_{\epsilon\to 0} C_\epsilon =0$.
\end{lemma}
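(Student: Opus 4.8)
The plan is to combine the two pointwise oscillation estimates established just above with the hypothesized boundedness of the fixed convolution operators, after observing that the shifted kernels occurring in those estimates are merely translates of the kernels named in the hypotheses. Writing $w_\delta=\rho(X_n)^{\delta_n}\cdots\rho(X_1)^{\delta_1}u$ and $h_\delta=|W^\rho_{w_\delta}(u)|$, the integrand in the $r^\sigma$-estimate is exactly $|\langle u,\rho(x\tau_\delta(\mathbf{t}))w_\delta\rangle|=h_\delta(x\tau_\delta(\mathbf{t}))$; since $|\sigma|=1$ the cocycle disappears under the absolute value, so as a function of $x$ this is the ordinary right translate $r_{\tau_\delta(\mathbf{t})}h_\delta$. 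Likewise, with $h'_\delta=|W^\rho_u(\rho^*(X_n)^{\delta_n}\cdots\rho^*(X_1)^{\delta_1}u)|$, the integrand in the $\ell^\sigma$-estimate is the left translate of $h'_\delta$ by $\tau_\delta(\mathbf{t})^{-1}$.

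First I would record the elementary identity $f*(r_\tau h_\delta)=r_\tau(f*h_\delta)$ for ordinary right translation. Combined with the uniform boundedness of translations on the compact set $U_\epsilon$ and the hypotheses, this yields, for each fixed $\mathbf{t}$,
\[ \|\,|f|*r_{\tau_\delta(\mathbf{t})}h_\delta\,\|_B=\|r_{\tau_\delta(\mathbf{t})}(|f|*h_\delta)\|_B\le C_{U_\epsilon}\,\|\,|f|*h_\delta\,\|_B\le C_{U_\epsilon}M_\delta\,\|f\|_B, \]
where $M_\delta$ is the operator norm of the bounded operator $f\mapsto f*h_\delta$ supplied by the hypothesis and $C_{U_\epsilon}=\sup_{y\in U_\epsilon}\|r_y\|_{B\to B}<\infty$ (finite because right translation is continuous on $B$ and $U_\epsilon$ is compact). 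The $\ell^\sigma$-estimate is treated identically, the only difference being that a left translate of $h'_\delta$ must be rewritten as a right translation of $f$ together with a modular factor, which is bounded on the compact set $U_\epsilon$ (and equals $1$ in the unimodular case at hand); this produces the analogous bound with $h'_\delta$ in place of $h_\delta$.

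Next I would integrate over $\mathbf{t}\in[-\epsilon,\epsilon]^{|\delta|}$ and sum over $\delta$. As all terms are nonnegative, Tonelli presents $|f|*\osc_U^{r^\sigma}\phi$ as the pointwise integral of the maps $\mathbf{t}\mapsto|f|*r_{\tau_\delta(\mathbf{t})}h_\delta$, and Minkowski's integral inequality on the solid space $B$ moves the norm inside:
\[ \|\,|f|*\osc_U^{r^\sigma}\phi\,\|_B\le D_\epsilon\sum_{\substack{1\le|\alpha|\le n\\ |\delta|=|\alpha|}}\int_{[-\epsilon,\epsilon]^{|\delta|}}\|\,|f|*r_{\tau_\delta(\mathbf{t})}h_\delta\,\|_B\,(dt_1)^{\delta_1}\cdots(dt_n)^{\delta_n}. \]
Inserting the per-$\mathbf{t}$ bound, using solidity to replace $|f|$ by $f$, and evaluating the $\mathbf{t}$-integral as the volume $(2\epsilon)^{|\delta|}$ gives the explicit constant
\[ C_\epsilon=D_\epsilon\,C_{U_\epsilon}\sum_{\substack{1\le|\alpha|\le n\\ |\delta|=|\alpha|}}M_\delta\,(2\epsilon)^{|\delta|}, \]
and the same computation with $h'_\delta$ settles the $\ell^\sigma$-inequality.

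Finally, the decay $C_\epsilon\to0$ is immediate: each surviving multi-index satisfies $|\delta|=|\alpha|\ge1$, so every summand carries a factor $(2\epsilon)^{|\delta|}\to0$, while the prefactors remain bounded as $\epsilon\to0$ — the operator norms $M_\delta$ are fixed and independent of $\epsilon$, the sum has finitely many terms, and $D_\epsilon$ (which absorbs the constant of Lemma 2.5 in \cite{Christensen2012} together with the suprema of the cocycle derivatives $g_\gamma$ over $U_\epsilon$) and $C_{U_\epsilon}$ are dominated by their values on $U_1$ once $\epsilon\le1$. The step I expect to require the most care is the interchange of the $B$-norm with the $\mathbf{t}$-integral: one must check that $\mathbf{t}\mapsto|f|*r_{\tau_\delta(\mathbf{t})}h_\delta$ is a strongly measurable $B$-valued map with integrable norm, so that Minkowski's integral inequality genuinely applies; this rests on the continuity of translation on $B$ and the boundedness hypotheses, and it is here, together with the passage from $f$ to $|f|$, that solidity of $B$ is used crucially.
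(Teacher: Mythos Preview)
Your argument is correct and follows essentially the same route as the paper: both proofs plug the pointwise oscillation bounds into the convolution, shift the translation by $\tau_\delta(\mathbf t)$ off the kernel (you pull it outside via $f*(r_\tau h_\delta)=r_\tau(f*h_\delta)$, the paper pushes it onto $f$), and then pass the $B$-norm through the finite-dimensional $\mathbf t$-integral using continuity of translation together with a vector-valued integration inequality (you cite Minkowski, the paper cites Theorem~3.29 in \cite{Rudin1991}), obtaining $C_\epsilon = D_\epsilon\cdot(\text{polynomial in }\epsilon\text{ with no constant term})\to 0$. One small remark: the lemma is stated for a general Lie group, not just the unimodular application, so your modular factor in the $\ell^\sigma$-case should simply be absorbed into the compact-set bound $C_{U_\epsilon}$ rather than declared equal to $1$.
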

\begin{proof}
Write $\mathbf{t}=(t_1,\ldots ,t_n)$.  Notice that
 \begin{align*}
  |f*&\osc_U^{\ell^\sigma} \phi (x) | \\
  &\leq D_\epsilon \sum_{\stackrel{1\leq
    |\alpha|\leq n}{|\delta|=|\alpha|} }
  \int_{[-\epsilon,\epsilon]^{|\delta|}}
  |\ell_{\tau_\delta(\mathbf{t})} f|*|W_{\rho^*(X_n)^{\delta_n}
    \cdots \rho^*(X_1)^{\delta_1} u}(u)(x) )| (dt_1)^{\delta_1} \dots
  (dt_n)^{\delta_n}.
 \end{align*}
  Since left translation is continuous on $B$ the right hand side
  defines a function in $B$ by Theorem 3.29 in \cite{Rudin1991}, and by
  solidity $f*\osc_U^{\ell^\sigma} \phi (x)$ is also in $B$.
  Moreover,
 $\| f*\osc_U^{\ell^\sigma} \phi \| \leq C_\epsilon \| f\|$, where
  $C_\epsilon$ is equal to $D_\epsilon$ multiplied by a polynomial in
  $\epsilon$ with no constant term. Since $D_\epsilon$ is uniform in
  $\epsilon$ we see that $\lim_{\epsilon\to 0} C_\epsilon =0$.
  
  The proof for convolution with right oscillations follows in a
  similar manner.
\end{proof}

\bibliographystyle{abbrv}
\bibliography{projrep}

\end{document}